\documentclass[11pt,twoside]{article}
\usepackage{pdfsync}
\usepackage{mathrsfs,amssymb,amsfonts,amsthm,amsmath,amsbsy}
\usepackage{natbib}
\usepackage{dsfont}
\usepackage{fancyhdr}
\usepackage[usenames,dvipsnames]{xcolor}
\usepackage[bookmarks,bookmarksnumbered,colorlinks=true,pdfstartview=FitV, linkcolor=black,citecolor=black,urlcolor=black]{hyperref}
\usepackage{autonum}
\usepackage{cancel}

\renewcommand{\baselinestretch}{1.1}
\allowdisplaybreaks

\bibpunct{\textcolor{black}{(}}{\textcolor{black}{)}}{\textcolor{black}{;}}{a}{\textcolor{black}{,}}{\textcolor{black}{;}}

\newcommand{\bigtimes}{\raisebox{-.5mm}{$\text{\Large$\times$}$}}
\newcommand{\simplex}{\overline\nabla}
\newcommand{\simplexone}{\nabla}
\newcommand{\K}{\mathbb{K}}
\newcommand{\simplexH}{\overline{\K}_{H}}
\newcommand{\densesimplexH}{\K_{H}}
\newcommand{\simplexHo}{\K_{H}^{\circ}}
\newcommand{\T} {\mathbb{T}}

\newcommand{\ind}{\mathds{1}}

\newcommand{\bs}[1]{\boldsymbol{#1}}
\newcommand{\PD} {\mathrm{PD}}

\newcommand{\tn} {|\bs\theta|}

\newcommand{\Asv} {A}
\newcommand{\Amv} {A}
\newcommand{\A} {\mathcal{A}}
\newcommand{\AAA} {A}
\newcommand{\Abb} {\mathbb{A}}
\newcommand{\B} {B}
\newcommand{\BB} {\mathbb{B}}
\newcommand{\C} {{C}}
\newcommand{\D} {\mathcal{D}}

\newcommand{\N} {\mathbb{N}}
\newcommand{\PP} {\mathbb{P}}
\newcommand{\PPP} {\mathcal{P}}

\newcommand{\R} {\mathbb{R}}
\newcommand{\Z}{\mathbb{Z}}

\newcommand{\mm} {\mathbf{m}}

\newcommand{\Xt} {X}
\newcommand{\Xtau} {\mathcal{X}}

\numberwithin{equation}{section}
\newtheorem{theorem}{Theorem}[section]
\newtheorem{proposition}[theorem]{Proposition} 
\newtheorem{corollary}[theorem]{Corollary}
\newtheorem{lemma}[theorem]{Lemma} 
\newtheorem{example}[theorem]{Example} 
\newtheorem{definition1}[theorem]{Definition} \newenvironment{definition}{\begin{definition1}\rm}{\hfill$\square$\end{definition1}}
 
\newtheorem{remark1}[theorem]{Remark} \newenvironment{remark}{\begin{remark1}\rm}{\hfill$\square$\end{remark1}}

\long\def\symbolfootnote[#1]#2{\begingroup\def\thefootnote{\hspace*{-1mm}\fnsymbol{footnote}}\footnote[#1]{#2}\endgroup}

\fancypagestyle{plain}{
\fancyhead{}
}

\title{\bf \vspace{-15mm}
Multiple Poisson--Dirichlet diffusions on\\ generalized Kingman simplices
}

\author{
\normalsize \textsc{Cristina Costantini}\\
\normalsize \emph{National Group for Mathematical Analysis, Probability and Applications, INdAM, Italy}\\[2mm]
\normalsize \textsc{Matteo Ruggiero}\\
\normalsize \emph{Stern School of Business, NYU Abu Dhabi}
}
\date{\small\today}

\begin{document}
\maketitle
\thispagestyle{empty}

\renewcommand{\baselinestretch}{1.05}

\vspace{-8mm}
\begin{center}
\emph{This paper is dedicated to the memory of T.G.~Kurtz,}

\emph{without whose work it simply would not exist.}

\end{center}

\vspace{-2mm}
\begin{center}
\begin{minipage}{.78\textwidth}
\small\noindent

We construct a new class of infinite-dimensional diffusions with values in a generalized Kingman simplex with finitely many marks. The model describes the temporal evolution of the relative frequencies of infinitely many types that are \emph{labeled} by a finite number $H$ of marks, but \emph{unlabeled} within each mark.
We first establish a \emph{blockwise skew-product} representation for a finite-type Wright--Fisher diffusion, extending the aggregation--renormalization self-similarity property of Dirichlet laws. 
The decomposition separates an $H$-dimensional Wright--Fisher diffusion governing the evolving random mark masses, from $H$ Wright--Fisher diffusions, each run on its own random clock, which describe the evolution of the relative frequencies within each mark. 
After ranking the within-mark frequencies in decreasing order, we identify the distributional limit as the number of types per mark tends to infinity and we derive an explicit form of its infinitesimal generator on a suitable domain.
The limiting diffusion admits the multiple Poisson--Dirichlet distribution as a stationary distribution; it recovers the infinitely-many-neutral-alleles diffusion when all types share the same mark and yields a diffusion on the Thoma simplex when there are two marks. \\[-2mm]

\textbf{Keywords}: 
infinitely-many-alleles model; 
multiparameter random time change;
multiple random partition; 
skew-product representation;
Thoma simplex;
Wright--Fisher diffusion.\\[-3mm]

\end{minipage}
\end{center}




\section{Introduction}

\subsection{Diffusion processes on infinite-dimensional simplices}
\label{sec:preliminaries}

The Poisson--Dirichlet diffusion, mostly known as \emph{infinitely-many-neutral-alleles model}, is a diffusion process taking values in the space
\begin{equation}
\begin{aligned}\label{simplex}
\simplex:=\bigg\{x\in [0,1]^{\infty}:\ x_1\ge x_2\ge
\ldots\ge 0,\sum_{i=1}^{\infty}x_i\le 1\bigg\},
\end{aligned}
\end{equation} 
sometimes called \emph{Kingman simplex}, which is the closure (in the product topology) of the infinite-dimensional ordered simplex
\begin{equation}
\begin{aligned}\label{simplexone}
\simplexone:=\bigg\{x\in [0,1]^{\infty}:\ x_1\ge x_2\ge\ldots\ge 0
,\sum_{i=1}^{\infty}x_i=1\bigg\}.
\end{aligned}
\end{equation} 
On a suitable domain, the generator of the Poisson--Dirichlet diffusion is defined on $\simplexone$ by
\begin{equation}
\begin{aligned}
\label{IMNAoperator}
\Abb^{\theta} f(x):=\frac{1}{2}\sum_{i,j=1}^{\infty}x_i(\delta_{ij}-x_j)\frac{\partial^2f}{\partial x_i\partial x_j}(x)
-\frac{\theta}{2}\sum_{i=1}^{\infty}x_i\frac{\partial f}{\partial x_i}(x),
\end{aligned}
\end{equation}
and on $\simplex\setminus\simplexone$ by continuous extension (which we denote by the same symbol).
Here $\theta>0$ and $\delta_{ij}$ is the Kronecker delta. 
\cite{EK81} showed that the closure of $\Abb^{\theta}$ generates a Feller semigroup on $C(\simplex)$ and the associated diffusion 
process is stationary and reversible with respect to Kingman's Poisson--Dirichlet 
distribution $\PD(\theta)$ \citep{K75}; cf.~also \cite{F10}, Theorem 
5.2. See \cite{EK93}, Section 9, and \cite{F10}, Section 5.1, for general reviews on this model.

The family of diffusions with operator $\Abb^{\theta}$ may be thought of as describing the temporal evolution 
of the decreasingly ranked frequencies of infinitely-many types in an ideally infinite population, subject to parent-independent mutation and random genetic drift. 
This interpretation is supported by constructions obtained through Wright--Fisher (WF) and Moran type discrete chains; see, for example, \cite{EK81}, Section 3. The model is also closely related to a class of Fleming--Viot 
probability-measure-valued diffusions \citep{FV79} driven by an ergodic pure-jump process, 
sometimes called the \emph{labeled} infinitely-many-neutral-alleles model; cf.~\cite{EK86}, Section 10.4.
In these models, one keeps track not only of the frequencies, but also of the types, by assigning to each frequency a \emph{label} (given by a location in an arbitrary Polish space). 
Ranking the atom masses of the Fleming--Viot process after forgetting the labels yields an unlabeled diffusion whose generator is $\Abb^{\theta}$; see \cite{EK93}, Theorem 9.2.1.

\cite{P09} introduced a two-parameter extension of the Poisson--Dirichlet diffusion, with generator
\begin{equation}\label{Petrov}
\Abb^{\alpha,\theta} f:=\Abb^{\theta} f-\frac{\alpha}{2}\sum_{i=1}^{\infty}\frac{\partial f}{\partial x_i}, \quad \quad 0\le \alpha<1, \quad \theta\ge-\alpha.
\end{equation} 
This diffusion is stationary and reversible with respect to  the two-parameter Poisson--Dirichlet distribution \citep{P95,PY97}, and was further investigated in \cite{FS10,Fetal11,RWF13,Cea17,
Fea21,
Fea22,Fea23,Gea24}. 

A natural extension of the Kingman simplex is the \emph{Thoma simplex}
\begin{equation}\label{Thomasimplex}
\T:=\bigg\{(x,y):\,x,y\in\simplex,\ \sum_{i=1}^{\infty}(x_i+y_i)\leq 1\bigg\},
\end{equation}
where $\{x_i\}_{i\geq 1}$ and $\{y_i\}_{i\geq 1}$ may be viewed as the frequencies of infinitely-many types labeled with two possible marks, separately ranked for each mark. 

Diffusions on $\T$ were introduced in \cite{BO09} and 
\cite{O10}. In the latter work, the process degenerates to the Poisson--Dirichlet diffusion in a suitable limit; see also \cite{O18,K20,K24}.

In this paper we aim at constructing a diffusion model for the temporal evolution of infinitely-many type frequencies that are \emph{labeled} by a finite number of marks, but \emph{unlabeled} within each mark.
A natural state space for this process is the \emph{generalized Kingman simplex}
\begin{equation}
\begin{aligned}
\label{extendedKingmansimplex}
\simplexH:=\bigg\{z=(z_1,\ldots ,z_H):\,z_h\in\simplex,\,\sum_{h=
1}^H\sum_{i=1}^{\infty}z_{h,i}\leq 1\bigg\},
\end{aligned}
\end{equation}
which is the closure of 
\begin{equation}
\begin{aligned}
\label{denseKingmansimplex}
\densesimplexH:=\bigg\{z=(z_1,\ldots ,z_H):\,z_h\in\simplex,\,\sum_{h=
1}^H\sum_{i=1}^{\infty}z_{h,i}= 1\bigg\},
\end{aligned}
\end{equation}
where each $z_h\in\simplex$ describes the ranked frequencies of infinitely-many types with mark $h$, for $h=1,\ldots,H$. Clearly, $\simplexH$ reduces to \eqref{simplex} when $H=1$ and to \eqref{Thomasimplex} when $H=2$.

{The diffusion process we construct admits as a stationary distribution the recently introduced \emph{multiple Poisson--Dirichlet distribution} \citep{S24a,S24b} (see Definition~\ref{def:mPD} below), which generalizes Kingman's Poisson--Dirichlet distribution to a probability measure on $\densesimplexH$.} Accordingly, we call our process a \emph{multiple Poisson--Dirichlet diffusion}.
It reduces to the Poisson--Dirichlet diffusion with generator \eqref{IMNAoperator} when $H=1$, while for $H=2$ it lives on the Thoma simplex but does not coincide with the diffusions in \cite{BO09,O10}. 

{The multiple Poisson--Dirichlet distribution can be characterized as the unique probability measure on $\densesimplexH$ corresponding to Strahov's multiple partition structure; see \cite{S24a,S24b}. Although the diffusion is constructed on the closure $\simplexH$, this law is naturally supported on the dense simplex $\densesimplexH$.} This parallels the classical characterization of the Poisson--Dirichlet distribution as the unique probability measure on $\simplexone$ corresponding to Ewens' partition structure; see \cite{K78a,K78b}. 
Building on \cite{K78a,K78b}, \cite{P09} constructs the Poisson--Dirichlet diffusion (and, more generally, the two-parameter Poisson--Dirichlet diffusion) via limits of Markov chains on partitions. This is a completely different route than that of  \cite{EK81}, which constructs the one-parameter Poisson--Dirichlet diffusion as the limit of Wright--Fisher (WF) diffusions after decreasingly ranking their components, and does not use the results of \cite{K78a,K78b} (a WF-diffusion based construction of the two-parameter diffusion is given in \cite{Cea17}). 
Here we follow the WF diffusion route and therefore do not rely on the representation-theoretic framework of \cite{S24a,S24b}. 
{Our convergence and identification arguments differ from those of \cite{EK81} and in our case the limiting operator comes with a non-obvious domain and cannot be viewed as an operator on $\C(\simplexH)$ as in \cite{EK81,P09,Cea17}  in the $H=1$ case (see Section~\ref{subsec:domain-subtlety}).}

Our starting point is a WF diffusion with $HK$ types, grouped into $H$ marks. The main tool is a \emph{blockwise skew-product representation} of this diffusion: at the upper level, the mark masses evolve as an $H$-type WF diffusion; at the lower level, conditional on the mark masses, the within-mark relative frequency vectors evolve as independent WF diffusions, each run on a random clock determined by the corresponding mark mass (Section~\ref{sec:hierarchicalWF}). 
Related skew-product constructions in other contexts include, for example, \cite{G63, WY98, P11, KS88, P91, Fea23, 
FPRW18}. In those constructions, typically there is a single random clock (in the classical case of \cite{G63}, determined by the random length  of the process). Here we have a different random clock for each mark (determined by the random mark mass), leading to a \emph{multiparameter random time change} (see \cite{Hel74, Kur80}, \cite{EK86}). 

We work under the assumption $\theta_h\ge 1$ for all $h$. This restriction ensures that the random clocks are well defined; see \eqref{eq:rtc} and Lemma \ref{lem:thetas}. 
The case when the parameters $\theta_h$ are allowed to be smaller than $1$ presents significant additional technical difficulties and we leave it for future work.

We first obtain the limiting diffusion in this hierarchical representation and then identify its infinitesimal generator in the original coordinates (Section~\ref{sec:convergence}).

The state space $\densesimplexH$ is natural in population genetics when alleles are classified into finitely many functional classes. A prominent example is cystic fibrosis, caused by mutations in the \emph{cystic fibrosis transmembrane conductance regulator} gene \citep{CT09}: the roughly 2000 alterations described to date cause a wide range of disease severity and can be grouped into six or seven classes; see, e.g., \cite{Rea05,DM16}. Such classification is clinically relevant because available treatments are often class-specific; see \cite{Vea16}. Our construction thus provides a model for the temporal dynamics both of the frequency of each class and of the relative allele frequencies within each class. 

Another potential application is Bayesian inference. A large literature studies time evolving random measures with Dirichlet, one- or two-parameter Poisson--Dirichlet and related distributions as equilibrium measures; see the recent review \cite{Qea22}. The induced laws on path space can be used as priors for temporally correlated data, enabling posterior inference on trajectories and parameters, typically via MCMC or sequential Monte Carlo; see, e.g., \cite{Car17,MR16,RT08,KKK21}. 
In this perspective, our diffusion provides a prior on continuous trajectories with values in $\simplexH$ for data drawn from an infinite pool of types partitioned into finitely many classes. Moreover, \cite{PRS16,Aea21,Aea23} establish analytical tractability for certain hidden Markov models driven by Fleming--Viot/Dawson--Watanabe diffusions, under suitable observation schemes, and \cite{DPea25} for similar models driven by Petrov's diffusion \eqref{Petrov}. We expect the present model to enjoy similar tractability properties (not pursued here), potentially allowing closed-form calculations in a Bayesian posterior analysis.

%

\subsection{Overview of the results}\label{sec:overview}

We provide an informal overview of the construction developed in the rest of the paper.
Consider an ideally infinite population with finitely many types, where types are \emph{marked}. We assume
there are $H\ge 1$ marks and $K\ge 2$ types for each mark; we denote by $(h,i)$ the $i$th type among those
with mark $h$ ($i=1,\ldots,K$, $h=1,\ldots,H$), so the total number of types is $HK$.
We refrain from using the terminology ``subgroup $h$'' to avoid suggesting a spatial structure, which we do not consider here. 

For $L\in \N$, let $\Delta_L$ denote the usual $L$ probability simplex
\begin{equation}\Delta_L:=\bigg\{z\in [0,1]^L:\,\sum_{h=1}^L z_h=1\bigg
\}.
\label{findimsimplex}
\end{equation}
We let the vector of type frequencies $Z^K$, with states $z=(z_1,\ldots,z_H)\in\Delta_{HK}$ with $z_h=(z_{h,1},\ldots,z_{h,K})$, evolve as an $HK$-type WF diffusion with generator
\begin{equation}
\begin{aligned}\label{B^K}
\B^K f(z)
:=&\,\frac{1}{2}\sum_{h,k=1}^H\sum_{i,j=1}^K
z_{h,i}(\delta_{h,k}\delta_{i,j}-z_{k,j})
\frac{\partial^2f}{
\partial z_{h,i}\partial z_{k,j}}
+\frac{1}{2}\sum_{h=1}^H\sum_{i=1}^K
\bigg(\frac{\theta_h}{K}-\tn z_{h,i}\bigg)\frac{\partial f}{\partial z_{h,i}},
\end{aligned}
\end{equation}
acting on $\D(\B^K):=\C^2(\Delta_{HK})$, where $\theta_1,\ldots,\theta_H>0$,
$\bs\theta:=(\theta_1,\ldots,\theta_H)$ and $\tn:=\sum_{h=1}^H\theta_h$. Here and later we define, for a closed set $D\subset \R^d$, the space of twice-differentiable functions on $D$ as
\begin{equation}
\label{C2convention}
\C^2(D):=\{f\in \C(D):\ \exists\tilde {f}\in \C^2(\R^d),\ \tilde {f}
\vert_D=f\},
\end{equation} 
where $\C(D)$ is the set of continuous functions on $D$. 

In the classical scaling-limit construction from WF Markov chains (cf., e.g., \cite{EK86}, Section~10.2; \cite{E09}, Section~4.1),
the second-order term corresponds to mean-field resampling (genetic drift), while the first-order term corresponds to parent-independent mutation
to type $i$ within mark $h$ at rate $\theta_h/(2K)$ (for simplicity we allow self-mutations). We assume absence of selection throughout.

In view of the blockwise skew-product decomposition below, we work under the standing assumption that $\theta_h\ge 1$ for all $h$, which ensures that the multiparameter random time change \eqref{eq:rtc} is well defined (see Lemma \ref{lem:thetas}).

When $H=1$, \cite{EK81} showed that ranking the $K$ frequencies in decreasing order and letting $K\to\infty$ yields the Poisson--Dirichlet diffusion with generator \eqref{IMNAoperator}.
Here we want to carry out an analogous operation for fixed $H>1$: for each mark $h$, we want to rank the within-mark  frequencies $\{Z^K_{h,i}\}_{i=1}^K$ in decreasing order and let $K\to\infty$.

A convenient approach is suggested by the aggregation--renormalization (self-similarity) property of Dirichlet laws, recalled in Proposition~\ref{pro:Dirselfsimilarity}:
grouping Dirichlet-distributed frequencies according to a partition of the indices, both the sums for each group and the 
relative frequencies within each group are still Dirichlet distributed with appropriate parameters. 

Accordingly, writing $Z^K:=(Z_1^K,\ldots,Z_H^K)$ with $Z_h^K:=(Z_{h,1}^K,\ldots,Z_{h,K}^K)$, we consider the mark masses process
\[
\Bigg(\sum_{i=1}^K Z_{1,i}^K(\cdot),\ldots,\sum_{i=1}^K Z_{H,i}^K(\cdot)\Bigg)
\]
and, for each $h$, the within-mark relative frequency process
\begin{equation}\label{normalization in overview}
\Bigg(\frac {Z_{h,1}^K(\cdot )}{\sum_{i=1}^KZ_{h,i}^K(\cdot )},\ldots 
,\frac {Z_{h,K}^K(\cdot )}{\sum_{i=1}^KZ_{h,i}^K(\cdot )}\Bigg).
\end{equation}
It turns out that, formally, the two above processes have joint generator of the form 
\begin{align}
\A^Kf(w,x)
=&\,\frac{1}{2}\sum_{h,k=1}^Hw_h(\delta_{h,k}-w_
k)\frac {\partial^2f}{\partial w_h\partial w_k}+\frac{1}{2}\sum_{h=1}^
H\Big(\theta_h(1-w_h)-(\tn-\theta_h)w_h\Big)\frac {\partial f}{\partial 
w_h}\nonumber\\
&\,+\sum_{h=1}^H\frac{1}{w_h}\,\bigg[\frac{1}{2}\sum_{i,j=1}^Kx_{h,i}(\delta_{
i,j}-x_{h,j})\frac {\partial^2f}{\partial x_{h,i}\partial x_{h,j}}+\frac {\theta_
h}2\sum_{i=1}^K\bigg(\frac 1K-x_{h,i}\bigg)\frac {\partial f}{\partial 
x_{h,i}}\bigg],
\end{align}
where $w_h$ is the mass of mark $h$ and $x_{h,i}$ is the relative frequency of type $i$ within mark $h$.
The first line is the generator of an $H$-type WF diffusion, with parent-independent mutation, while each bracketed term is the generator of a $K$-type WF diffusion, with symmetric mutation. 
The coefficients $1/w_h$ can be interpreted as deriving from a
{\em multiparameter random time change}: this leads to the hierarchical representation proved in Section~\ref{sec:hierarchicalWF}, which can be informally described as follows. 
For $\theta_h\geq 1$ for all $h$, if 
$\sum_{i=1}^KZ_{h,i}^K(0)>0$ for all $h=1,\ldots ,H$, 
and  
\begin{equation}\nonumber
\bigg(\sum_{i=1}^KZ_{1,i}^K(0)\, ,\ldots,\,\sum_{i=1}^KZ_{H,i}^K(0)\bigg)\,,\,\frac{Z_1^K(0)}{\sum_{i=1}^KZ_{1,i}^K(0)}\,,\ldots ,\,\frac{Z_H^K(0)}{\sum_{i=1}^KZ_{H,i}^K(0)}
\end{equation} 
are mutually independent, 
it holds that
\begin{equation}\label{eq:rz}\begin{aligned}
Z^K:=\big(Z^K_1,\ldots ,Z^K_H\big)\stackrel d{=}\big(W^K_1\mathcal{X}_
1^K,\ldots ,W^K_H\mathcal{X}_H^K\big),\end{aligned}
\end{equation}
with $W^K$ an $H$-type WF diffusion with mutation 
parameters $\theta_1,\ldots ,\theta_H$, and $\mathcal{X}_h^K$ defined as
\begin{equation}
\mathcal{X}_h^K(t):=\Xt_h^K\bigg(\int_0^t\frac 1{
W_h^K(s)}\,d {s}\bigg),\qquad h=1,\ldots ,H,\label{eq:rtc}
\end{equation}
where $W^K,X_1^K,\ldots ,X_H^K$ are independent and $X_h^K$ is a $K$-type WF diffusion with  symmetric mutation with rate $\theta_h/(2K)$. 
Eq.~\eqref{eq:rtc} expresses the fact that changes in 
the relative frequencies of the types with mark $h$ occur 
on the time scale $\int_0^t\big(1/W^K_h(s)\big)d {s}$. 
Heuristically, this is related to the fact that,
under random resampling, at time $s$ a parent will be 
chosen from the types with mark $h$ 
with probability $W^K_h(s)$, hence we have to 
wait on average for a $(1/W^K_h(s))$ amount of time  until a parent with mark $h$ is chosen. 

The representation \eqref{eq:rz}-\eqref{eq:rtc} allows us to prove, under a mild assumption on the initial distributions, that the process obtained from $Z^K$ by ranking the within-mark frequencies converges in distribution, as $K\to\infty$, to a Markov process $Z$.
While one may conjecture from \eqref{B^K} that the limiting generator has the form
\begin{equation}\label{eq:Bfalse}
\hat{\BB}f(z_1,\ldots,z_H)
:=\frac{1}{2}\sum_{h,k=1}^H\sum_{i,j=1}^{\infty}
z_{h,i}(\delta_{h,k}\delta_{i,j}-z_{k,j})\frac {\partial^2f}{\partial 
z_{h,i}\partial z_{k,j}}-\frac {\tn}2\sum_{h=1}^H\sum_{i=1}^{\infty}
z_{h,i}\frac {\partial f}{\partial z_{h,i}},
\end{equation}
it turns out that the generator of $Z$ acts as \eqref{eq:Bfalse} only on a subclass of functions in its domain.
In fact, the functions in the domain of the generator of $Z$ can be written as
\begin{equation}\label{f(z)=f_0f_1}f(z):=f_0(|z_1|,\ldots ,|z_H|)
F(z),\quad\quad |z_h|:=\sum_{i\ge 1}z_{h,i},\end{equation}
where $f_{0}$ and  $F(z)$ have a specific form (see \eqref{Phi grande}-\eqref{final D(B)})
and the generator of $Z$  is given by 
\begin{equation}\label{eq:B}\begin{aligned}
\BB f=\hat{\BB }f+\frac 12\sum_{h=1}^H\theta_h\frac {\partial f_0}{
\partial |z_h|}F.
\end{aligned}
\end{equation}
All this will be fully detailed in Section \ref{sec:convergence}.

Note that we could isolate the mark-specific dynamics 
and highlight the interaction among the marks by writing, for $f$ as in \eqref{f(z)=f_0f_1},
\begin{equation}\label{B as indep dynamics + interaction}
\BB f=\sum_{h=1}^H\BB_hf-\frac{1}{2}
\sum_{1\le k\ne h\le H}\sum_{i,j=1}^{\infty}z_{h,i}z_{k,j}\frac {\partial^2 
f}{\partial z_{h,i}\partial z_{k,j}},
\end{equation}
where
\begin{equation}\label{B_h}
\BB_hf:=\frac{1}{2}\sum_{i,j=1}^{\infty}z_{h,i}(\delta_{ij}-z_{h,j})\frac {\partial^2f}{\partial z_{h,i}\partial 
z_{h,j}}-\frac {\tn}2\sum_{i=1}^{\infty}z_{h,i}\frac {\partial f}{
\partial z_{h,i}}+\frac {\theta_h}2\frac {\partial f_0}{\partial 
|z_h|}F.
\end{equation}

Section~\ref{sec:ALTstationarity} shows the connection between the process constructed in Section \ref{sec:convergence} and the \emph{multiple Poisson--Dirichlet distribution} recently introduced by \cite{S24a,S24b}. 
Recall that Kingman's $\PD(\theta)$ distribution on $\simplexone$ is 
the de~Finetti measure in Kingman's representation theorem for the Ewens partition structure (see \cite{K78a,TE97,C16,T21}) and, at the same time, the reversible law of the Poisson--Dirichlet diffusion \eqref{IMNAoperator}.
{\cite{S24a,S24b} constructs a \emph{multiple partition structure} analogous to the Ewens partition structure, but with the elements of the partitions labeled by finitely many marks, and then generalizes Kingman's representation theorem.}
The corresponding de~Finetti measure is the probability law on $\densesimplexH$ recalled next. 
 
\begin{definition}(Multiple Poisson--Dirichlet distribution)\label{def:mPD}\ \\
Let $\upsilon=(\upsilon_1,\ldots,\upsilon_H)\sim\mathrm{Dir}_H(\theta_1,\ldots,\theta_H)$ and, independently, let $\xi_h\sim\PD(\theta_h)$ for $h=1,\ldots,H$. Then $\zeta=(\zeta_1,\ldots,\zeta_H)\in\densesimplexH$, where $\zeta_h:=\upsilon_h\xi_h=(\upsilon_h\xi_{h,1},\upsilon_h\xi_{h,2},\ldots)$, is said to have \emph{multiple Poisson--Dirichlet} distribution $\PD(\theta_1,\ldots,\theta_H)$.
\end{definition}

\begin{remark}\label{re:StraMPD}
In \cite{S24b}, the multiple Poisson--Dirichlet distribution is actually the joint law of $(\zeta,\upsilon)$, but clearly the two formulations are equivalent. 
\end{remark}

It turns out that the multiple Poisson--Dirichlet distribution in Definition~\ref{def:mPD} is a stationary distribution for the process $Z$ constructed in Section~\ref{sec:convergence}. For this reason, we call $Z$ the \emph{multiple Poisson--Dirichlet diffusion}.

Finally, we provide a construction of 
the multiple Poisson--Dirichlet distribution 
analogous to Kingman's construction of $\PD(\theta )$ as the limit in distribution of 
ranked symmetric Dirichlet random frequencies. 

%



\subsection{Structure of the paper}\label{subsec:structure}

In Section~\ref{sec:hierarchicalWF} we prove a blockwise skew-product representation for the $HK$-type WF diffusion: we first recall the aggregation--renormalization property of the $HK$- type Dirichlet law, then introduce an appropriate multiparameter time-changed dynamics (Subsection \ref{chartchang}), and finally show that, under aggregation--renormalization into $H$ blocks of $K$ components each, the $HK$-type WF diffusion follows such dynamics (Theorem~\ref{th:Krepr}). 

In Section~\ref{sec:convergence}, we rank the coordinates of the WF diffusion within each block and let $K\to\infty$, yielding the multiple Poisson--Dirichlet diffusion (Theorem \ref{Z1,Z2aslimit}); we also 
characterize its generator (Theorem \ref{explicit B and domain}) and discuss why, in this setting, the state space cannot be taken compact as in \cite{EK81,P09, Cea17}.

Finally, in Section~\ref{sec:ALTstationarity} we show that the process constructed in Section \ref{sec:convergence} is stationary with respect to the multiple Poisson--Dirichlet distribution (Corollary \ref{th:ALTBstar-stat}) and that, consistently, the multiple Poisson--Dirichlet distribution can be recovered as the limit of the distributions of  blockwise ranked Dirichlet random frequencies (Theorem \ref{th:MPDappr}).


\section{Blockwise skew-product representation for WF diffusions}\label{sec:hierarchicalWF}

In this section, after recalling the aggregation--renormalization property of Dirichlet laws (Section \ref{selfsim}), we construct the random time-changed process that is the candidate for the blockwise skew-product form of an $HK$-type WF diffusion and we characterize it as the unique solution of a martingale problem (Section \ref{chartchang}). This characterization allows us to derive the desired blockwise skew-product decomposition in Section \ref{skprodform}.


\subsection{Dirichlet self-similarity under aggregation and renormalization}\label{selfsim}
We start by recalling a well known self-similarity property for Dirichlet distributions, in a parameterization tailored to our purposes. Recall that a Dirichlet distribution $\mathrm{Dir}_L(a_1,\ldots ,a_L)$ on $\Delta_{L}$ as in \eqref{findimsimplex} has density
with respect to the $(L-1)$-dimensional Lebesgue measure, given by 
\begin{equation}\label{eq:Dir}
\frac{\Gamma(\sum_{l=1}^{L}a_{l})}{\prod_{l=1}^{L}\Gamma(a_{l})}w_{1}^{a_{1}-1}\cdots w_{L}^{a_{L}-1}, \quad a_{1},\ldots,a_{L}>0,
\end{equation} 
where $w_{L}=1-\sum_{l=1}^{L-1}w_{l}$, and $\Gamma(b)=\int_{0}^{\infty}y^{b-1}e^{-y}d y$.

\begin{proposition}\label{pro:Dirselfsimilarity}
Let 
\begin{equation}\nonumber
(\zeta_1,\ldots ,\zeta_{HK})\sim\mathrm{Dir}_{HK}\bigg(\frac{\theta_
1}K,\ldots ,\frac{\theta_1}K,\frac{\theta_2}K,\ldots ,\frac{\theta_
2}K,\ldots ,\frac{\theta_H}K,\ldots ,\frac{\theta_H}K\bigg),
\end{equation}
 where each $\theta_h$ is repeated $K$ times, and 
define  $\upsilon :=(\upsilon_1,\ldots ,\upsilon_H)$ and 
$\xi_h:=(\xi_{h,1},\ldots ,\xi_{h,K})$ by 
\begin{equation}\nonumber
\upsilon_
h:=\sum_{j=(h-1)K+1}^{hK}\zeta_j, \quad \quad 
\xi_{h,i}:=
\frac{\zeta_{(h-1)K+i}}{
\sum_{j=(h-1)K+1}^{hK}\zeta_j},
\end{equation}
for $h=1,\ldots ,H$ and $i=1,\ldots ,K$. Then
\begin{equation}
\begin{aligned}
\label{eq:upsilonxi}
\upsilon\sim\mathrm{Dir}_H(\theta_1,\ldots ,\theta_H),\quad \quad 
\xi_h\sim\mathrm{Dir}_K\bigg(\frac{\theta_h}K,\ldots ,\frac{\theta_
h}K\bigg), \quad h=1,\ldots ,H,
\end{aligned}
\end{equation} 
where $\upsilon ,\xi_1,\ldots ,\xi_H$ are independent. The converse is also true, namely, given $\upsilon$ and $\xi_h$, $h=1,\ldots ,H$ as in \eqref{eq:upsilonxi}, the vector $(\upsilon_1\xi_{1,1},\ldots ,\upsilon_1\xi_{1,K},\ldots ,\upsilon_H\xi_{H,1},\ldots ,\upsilon_H\xi_{H,K})$ has law
\begin{equation}\nonumber
\mathrm{Dir}_{HK}\bigg(\frac{\theta_
1}K,\ldots ,\frac{\theta_1}K,\frac{\theta_2}K,\ldots ,\frac{\theta_
2}K,\ldots ,\frac{\theta_H}K,\ldots ,\frac{\theta_H}K\bigg).
\end{equation}
\end{proposition}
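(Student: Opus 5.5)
The cleanest route is through the Gamma representation of Dirichlet laws. Let $\{G_{h,i}\}$, $h=1,\ldots,H$, $i=1,\ldots,K$, be independent with $G_{h,i}\sim\mathrm{Gamma}(\theta_h/K,1)$. Set $S_h:=\sum_{i=1}^K G_{h,i}$ and $S:=\sum_{h=1}^H S_h$. Then $(G_{h,i}/S)_{h,i}$ has the law of $\zeta$ in the statement. I will use two standard facts, both checked by a change of variables in densities:
\begin{enumerate}
\item[(a)] Lukacs' property: if $Y_1,\ldots,Y_L$ are independent with $Y_l\sim\mathrm{Gamma}(a_l,1)$, then $T:=\sum_l Y_l\sim\mathrm{Gamma}(\sum_l a_l,1)$, $(Y_l/T)_l\sim\mathrm{Dir}_L(a_1,\ldots,a_L)$, and $T$ is independent of $(Y_l/T)_l$.
\item[(b)] Sums of independent Gamma variables with a common scale are Gamma, with the shape parameters added.
\end{enumerate}

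For the direct statement, the identities
\[
\upsilon_h = \frac{S_h}{S}, \qquad \xi_{h,i} = \frac{\zeta_{(h-1)K+i}}{\upsilon_h} = \frac{G_{h,i}}{S_h}
\]
hold because the common normalization $S$ cancels in the ratio. Apply (a) within each block $h$. This shows that $\xi_h=(G_{h,i}/S_h)_i\sim\mathrm{Dir}_K(\theta_h/K,\ldots,\theta_h/K)$ and that $\xi_h$ is independent of $S_h$, with $S_h\sim\mathrm{Gamma}(\theta_h,1)$ by (b). The blocks are built from disjoint families of independent Gammas, so the $2H$ objects $S_1,\xi_1,\ldots,S_H,\xi_H$ are mutually independent. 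Now $\upsilon=(S_h/S)_h$ is a function of $(S_1,\ldots,S_H)$ only, so applying (a) to the $S_h$ gives $\upsilon\sim\mathrm{Dir}_H(\theta_1,\ldots,\theta_H)$. It follows that $\upsilon,\xi_1,\ldots,\xi_H$ are independent.

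For the converse, start from independent $\upsilon,\xi_1,\ldots,\xi_H$ with the laws in \eqref{eq:upsilonxi}. The direct part shows that the Gamma-built vector $\zeta$ produces a tuple $(\upsilon',\xi'_1,\ldots,\xi'_H)$ with exactly this joint law. The map
\[
(\upsilon,\xi_1,\ldots,\xi_H)\mapsto(\upsilon_h\xi_{h,i})_{h,i}
\]
is measurable and recovers $\zeta$ from $(\upsilon',\xi')$. Hence $(\upsilon_h\xi_{h,i})_{h,i}\stackrel d=\zeta$, which has the stated Dirichlet law.

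The only point needing care is proving (a), if it is not simply quoted. The change of variables $(y_1,\ldots,y_L)\mapsto(t,\,y_1/t,\ldots,y_{L-1}/t)$ has Jacobian $t^{L-1}$. Under it the joint density factorizes as
\[
t^{\sum_l a_l-1}e^{-t}\,\prod_l w_l^{a_l-1}
\]
times constants, which gives both the two marginal laws and the independence at once. A further technicality is that $\upsilon_h>0$ almost surely, so the ratios defining $\xi_h$ are well defined; this holds because each $S_h>0$ almost surely.

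As an alternative, one could compute directly in the density \eqref{eq:Dir}, using the substitution $\zeta_{(h-1)K+i}=\upsilon_h\xi_{h,i}$ with Jacobian $\prod_h\upsilon_h^{K-1}$. That calculation gives the same factorization but requires more bookkeeping of the normalizing constants.
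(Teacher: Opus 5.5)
Your argument is correct and is essentially the paper's own. The paper proves this proposition by appealing to the representation of Dirichlet laws as normalized independent gamma variables, quoting standard references; you have written that gamma/Lukacs argument out in full, including the transfer to the converse through the common law of $(\upsilon,\xi)$ and the measurable map $(\upsilon,\xi)\mapsto(\upsilon_h\xi_{h,i})_{h,i}$.
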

\begin{proof}
This is a special case of a more general result which relies on the construction of Dirichlet distributions through normalization of independent gamma random variables. See Theorem 1.3.1 in \cite{F10} and Proposition G.3 in \cite{GvdV13}.
\end{proof}

The above self-similarity holds for general 
partitions of the indices in the vector of parameters in \eqref{eq:Dir}, 
and can be also extended to gamma subordinators and Dirichlet 
random probability measures. See, e.g., \cite{F10}, Theorem 2.23, and \cite{GvdV13}, Section 4.1.2.


\subsection{Construction and characterization of the random time-changed process}\label{chartchang}

Our goal is to provide an analog of Proposition \ref{pro:Dirselfsimilarity} for WF diffusions. In this analog,
$\zeta_1,\ldots ,\zeta_{HK}$ 
are replaced by an $HK$-type WF 
diffusion $(Z_1,\ldots ,Z_{HK})$ with parent-independent 
mutation occurring at rates  
$$\frac{\theta_{1}}{2K},\ldots,\frac{\theta_{1}}{2K},\frac{\theta_{2}}{2K},\ldots,\frac{\theta_{2}}{2K},\ldots,\frac{\theta_{H}}{2K},\ldots,\frac{\theta_{H}}{2K},$$
and $\upsilon ,\xi_1,\ldots ,\xi_H$ by $W^K,\Xtau_1^
K,\ldots ,\Xtau_H^K$, 
where $W^K$ is an $H$-type WF diffusion with parent-independent mutation with rates $\theta_1/2,\ldots ,\theta_H/2$ (depending on $K$ only by its initial distribution) 
and each $\Xtau_h^K$ is a 
$K$-type WF diffusion, with symmetric mutation with rates $\theta_h/(2K)$, observed on a random time scale, 
namely  
\[\label{eq:randtscale}\Xtau_h^K(t):=X_h^K\bigg(\int_0^t\frac{1}{W^K_
h(s)}ds\bigg).\] 
Informally, the $HK$ types are marked with $H$ labels, each marking $K$ types;  
$W^K$ drives the evolution of the masses assigned 
to each group of types with the same mark, while $\Xtau^K_h$ describes the 
evolution of the relative frequencies within the group of types with mark $h$. 
The appearance of the random time scale 
$(\ref{eq:randtscale})$ can be related to the following 
heuristic observation. 
As it is well known, a WF diffusion models the 
evolution of the type frequencies in an ideally infinite population 
under the assumption that, at each generation, each 
individual of the next generation chooses its parent at random from the current generation. Then, at each generation, a parent will be chosen from types with mark $h$ with probability $W^K_h$ and we will have to wait on average for an interval of length $1/W^K_h$ until a parent with mark $h$ is chosen. Therefore changes in the relative frequencies of types with mark $h$ will occur on the time scale $\int_0^t\big(1/W^K_h(s)\big)ds$.

The number of marks  $H$ 
will be fixed throughout the paper, hence typically 
dropped as a superscript or subscript unless needed for clarity, 
whereas the number of types with the same mark, $K$, is kept fixed in this section but later will be let diverge to infinity.

More precisely, let $W^K=(W^K_1,\ldots ,W^K_H)$ be a WF diffusion on the $H$-simplex $\Delta_H$ (cf. \eqref{findimsimplex})
with generator, for $\bs\theta=(\theta_{1},\ldots,\theta_{H})$ and $\tn  :=\sum_{h=1}^H\theta_h$,
\begin{equation}
\begin{aligned}
\label{eq:hatA0}
\Asv_0f(w):=&\,\frac{1}{2}\sum_{h,k=1}^Hw_h(\delta_{h,k}-w_k)\frac{
\partial^2f}{\partial w_h\partial w_k}
+\frac{1}{2}\sum_{h=1}^H
(\theta_{h}-\tn  w_{h})
\frac{\partial f}{
\partial w_h},
\end{aligned}
\end{equation} 
acting on the domain $\D
(\Asv_0):=\C^2(\Delta_H)$ (cf. \eqref{C2convention}), where $\delta_{h,k}$ is the Kronecker delta. See \cite{EK86}, Theorem 8.2.8. 
By Theorem 4.4.1 of \cite{EK86}, $W^K$ is the unique solution of the martingale problem for $\Asv_0$. 
Let now
\begin{equation}
\begin{aligned}\label{findimsimplexinterior}\Delta^{\circ}_H:=\bigg\{w\in
\Delta_H:w_h>0,h=1,\ldots ,H\bigg\}.\end{aligned}
\end{equation} 
{Since completeness and separability of the state space will be used repeatedly in what follows, we record here a convenient metric under which $\Delta_H^\circ$ becomes a complete, separable metric space:}
\begin{equation}\label{Delta0metric}
{d(w,\tilde{w}):=\sqrt{\sum_{h=1}^H\bigg|\frac{1}{w_h}-\frac{1}{\tilde{w}_h}\bigg|^2}.}
\end{equation}
{Indeed, if $(w^n)$ is Cauchy for $d$, then for each $h=1,\ldots,H$ the sequence $(1/w_h^n)$ is Cauchy in $\mathbb R$, hence converges to some finite limit $a_h\ge 1$. Therefore $w_h^n\to a_h^{-1}>0$ for each $h$, and since $\sum_{h=1}^H w_h^n=1$ for all $n$, the limit belongs to $\Delta_H^\circ$. Separability is immediate since $w\mapsto (1/w_1,\ldots,1/w_H)$ is an isometric embedding into $\mathbb R^H$.}

The following lemma identifies a restriction in \eqref{eq:hatA0} that ensures that $\Delta_H\setminus \Delta^{\circ}_H$ acts as  an entrance boundary.

\begin{lemma}\label{lem:thetas}
Let $W^K$ be a WF diffusion with generator \eqref{eq:hatA0}. If $\theta_
h\ge 1$ for all $h=1,\ldots ,H$, then, almost surely, $W^K(t)\in\Delta^{
\circ}_H$ for all $t>0$. 
If, in addition, $W^K(0)\in\Delta^{\circ}_H$ almost surely, then
\begin{equation}\nonumber
\begin{aligned}
\PP(W^K(t)\in\Delta^{\circ}_H,\ \forall t\geq 0)=1.
\end{aligned}
\end{equation} 
\end{lemma}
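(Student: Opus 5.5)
The plan is to reduce to one-dimensional marginals. For each fixed $h$, the coordinate $W^K_h$ is itself a one-dimensional WF (Jacobi) diffusion. Apply the generator \eqref{eq:hatA0} to functions $f(w)=g(w_h)$ with $g\in\C^2([0,1])$:
\[
\Asv_0 f(w)=\tfrac12 w_h(1-w_h)g''(w_h)+\tfrac12\big(\theta_h-\tn w_h\big)g'(w_h).
\]
Hence $W^K_h$ solves the martingale problem for this one-dimensional operator. By the uniqueness for the one-dimensional WF martingale problem (Theorem 8.2.8 and Theorem 4.4.1 of \cite{EK86}), $W^K_h$ is the Jacobi diffusion with parameters $(\theta_h,\tn-\theta_h)$. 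Since $\Delta^\circ_H=\bigcap_h\{w_h>0\}$ and $H$ is finite, it suffices to show, for each $h$, that $\PP(W^K_h(t)>0\ \forall t>0)=1$, and that $\PP(W^K_h(t)>0\ \forall t\ge0)=1$ when $W^K_h(0)>0$ a.s.

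The next step is the boundary classification at $0$ via Feller's test. I would realize $W^K_h$ as a weak solution of
\[
dY=\tfrac12(\theta_h-\tn Y)\,dt+\sqrt{Y(1-Y)}\,dB,
\]
which is legitimate because martingale problem solutions correspond to weak solutions. Near $0$ the scale density is
\[
s'(y)=\exp\Big(-\int^y \frac{\theta_h-\tn u}{u(1-u)}\,du\Big)\asymp y^{-\theta_h}.
\]
So the scale function $s(y)=\int^y s'$ diverges at $0^+$ precisely when $\theta_h\ge1$. This makes $0$ non-attracting, so a path started in $(0,1]$ never reaches $0$, which gives the second statement. I would either invoke Feller's test for explosions (e.g.\ Karatzas--Shreve, Prop.~5.5.22) or argue directly. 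For the direct argument, take $\tau_\epsilon$ the hitting time of $\epsilon$ and $\tau^b$ that of a level $b\in(y_0,1)$. Optional stopping gives $\PP(\tau_\epsilon<\tau^b)=(s(b)-s(y_0))/(s(b)-s(\epsilon))\to0$ as $\epsilon\downarrow0$. Letting $b\uparrow1$ (or using recurrence to higher levels) gives that $0$ is never hit.

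The main obstacle is the case of a start on the boundary, $W^K_h(0)=0$, for the first claim. There I would show that the process leaves $0$ immediately and never returns. It leaves immediately because the drift at $0$ equals $\theta_h/2>0$: by the martingale problem applied to $g(w)=w$, and by comparison of $0$ as an entrance boundary with speed measure $m(dy)\asymp y^{\theta_h-1}dy$ integrable at $0$, the time spent at $0$ has Lebesgue measure zero. It never returns because, for any rational $r>0$, the strong Markov property at $\inf\{t\ge r: \ldots\}$ combined with the previous step gives no return to $0$ after any positive time $\sigma$ at which $W^K_h(\sigma)>0$. The times $\sigma$ can be chosen arbitrarily close to $0$, since the zero set has measure zero. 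Taking countable unions over rationals yields $W^K_h(t)>0$ for all $t>0$ a.s. The Markov property is available because the martingale problem for $\Asv_0$ is well posed (\cite{EK86}, Theorem 4.4.2). Intersecting over $h=1,\ldots,H$ concludes.
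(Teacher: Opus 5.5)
Your proposal is correct and follows essentially the same route as the paper: reduce to the one-dimensional marginal $W^K_h$, which is a WF (Jacobi) diffusion with mutation parameters $\theta_h$ and $\sum_{k\ne h}\theta_k$, and use the fact that $0$ is an entrance boundary when $\theta_h\ge 1$. The paper simply cites Karlin--Taylor for this boundary classification, whereas you derive it from the scale function via Feller's test and handle a start on the boundary explicitly; checking only the boundary $0$ for every coordinate suffices, since $\Delta^\circ_H$ is defined by $w_h>0$ for all $h$.
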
 
\begin{proof}
When $H=2$, it follows from \cite{KT81}, Example 8, p.~239, that if $
\theta_1,\theta_2\ge 1$, both 0 and 1 are entrance boundaries for $
W_1^K(t)\in [0,1]$. 
For $H>2$, the claim follows from reduction to the previous case. More specifically, for every $
h=1,\ldots ,H$, 
the single component $W^K_h$ is a WF diffusion on $[0,1]$ with mutation parameters $
\theta_h$ and $\theta_{(-h)}:=\sum_{k\ne h}\theta_k$. Cf.~\cite{D10}, Section 6.4. 
Hence, under the assumption on the parameters $\theta_h,\theta_{
(-h)}\ge 1$, 0 and 1 are entrance boundaries for $W^K_h$ as well. 
It follows that for no $h=1,\ldots ,H$, $W^K_h$ ever touches 0 or 1, 
hence $W^K(t)\in\Delta^{\circ}_H$ for all $t>0$ almost surely. The second claim is now obvious.
\end{proof}

We assume henceforth that
\begin{equation}
\begin{aligned}\label{eq:entrbdry}
\theta_{h}\geq 1, \quad \quad h=1,\ldots,H,
\end{aligned}
\end{equation}
and, unless otherwise stated, that $W^K(0)\in\Delta_H^\circ$ almost surely.
Then we may regard $W^K$ as a diffusion with state space $\Delta_H^\circ$, with generator
\eqref{eq:hatA0} acting on $\{f:\ f=\tilde f|_{\Delta_H^\circ},\ \tilde f\in C^2(\Delta_H)\}$.
Since the set of polynomials of $H$ variables is a core for the operator \eqref{eq:hatA0} (Theorem 8.2.8 of \cite{EK81}, $W^K$ is also the
unique solution of the martingale problem for $\Asv_0$ with domain
\begin{equation}
\begin{aligned}
\label{eq:DA0}\D(\Asv_0):=\{w_1^{p_1}w_2^{p_2}\cdots w_H^{p_H},\;
p_1,\ldots ,p_H\in\Z,\,p_1,\ldots ,p_H\geq 0\}.
\end{aligned}
\end{equation}

Let now $\Xt_h^K$, $h=1,\ldots ,H$, be mutually independent WF 
diffusions, independent of $W^K$, each taking values in $\Delta_K$. 
$\Xt_h^K$ has generator 
\begin{equation}
\begin{aligned}
\label{eq:hatAh}
\AAA_{h}^Kf(x):=&\,\frac{1}{2}\sum_{i,j=1}^Kx_i(\delta_{i,j}-x_j)\frac{
\partial^2f}{\partial x_i\partial x_j}+\frac{\theta_h}2\sum_{i=1}^
K\bigg(\frac 1K-x_i\bigg)\frac{\partial f}{\partial x_i},\\
\D(\AAA_{h}^K):=&\,C^2(\Delta_K).
\end{aligned}
\end{equation} 
 Here mutation is symmetric (for simplicity of notation we are allowing self-mutations) and 
the reversible measure is a $\mathrm{Dir}_K(\theta_h/K,\ldots ,\theta_
h/K)$ distribution.

For $\Delta^{\circ}_{H}$ as in \eqref{findimsimplexinterior}, set now
\begin{equation}
\begin{aligned}\label{eq:EK}
E_K:
=\Delta^{\circ}_{H}\times\Delta_K^{H},
\end{aligned}
\end{equation} 
where $\Delta_K^H$ denotes the $H$-fold cartesian product of $\Delta_
K$.
For $w\in \Delta^{\circ}_{H}$, define
\begin{equation}\label{eq:beta}
\begin{aligned}
\beta_0(w):=1,\quad\quad \beta_h(w):=\frac 1w_{h},\quad h=1,\ldots,H,
\end{aligned}
\end{equation} 
and define the random time-changed processes
\begin{equation}
\begin{aligned}
\label{eq:Xh}{\mathcal{X}}_h^K(t):=\Xt_h^K\bigg(\int_0^t\beta_h(W^K(s))ds\bigg
),\;\;h=1,\ldots ,H.\end{aligned}
\end{equation} 
Since $W^K$ takes values in $\Delta^{\circ}_{H}$, ${\mathcal{X}}_h^K$ is well defined for all times.

Consider the operator $\A^K$ of the form 
\begin{equation}
\begin{aligned}\label{eq:AK}
\A^Kf(w,x):=\Amv_0f(w,x)+\sum_{h=1}^H\beta_h(w)\Amv_
h^Kf(w,x),
\end{aligned}
\end{equation} 
where $w=(w_{1},\ldots,w_{H})\in \Delta_{H}$, $x=(x_{1},\ldots,x_{H})$, $x_{h}\in \Delta_{K}$, $h=1,\ldots,H$.
Here, with a slight abuse of notation, we still denote by 
$\Amv_0$ the operator that acts on $f$ as a function of $w$ like $\Asv_0$ in \eqref{eq:hatA0}, and, for each $h=1,\ldots ,H$, 
by $\AAA_{h}^K$ the operator that acts on $f$ as a function of $x_h$ like  $\AAA_{h}^K$ in \eqref{eq:hatAh}.

We will consider two domains for $\A^{K}$. The first one is defined as
\begin{equation}
\begin{aligned}
\label{eq:DK0}
\D_0(\A^K):=\bigg\{f=\prod_{h=0}^Hf_h:&\,f_0\in\D(\AAA_0),\,\lim_{w\rightarrow w^0}f_0(w)=0, \forall w^0\in\Delta_H\setminus\Delta^{\circ}_H,\\
&\ f_h\in\D(\AAA_{h}^K),\,h=1,\ldots ,H\bigg\},
\end{aligned}
\end{equation} 
for $\D(\Asv_0)$ as in \eqref{eq:DA0} and $\D(\AAA_{h}^K)$ as in \eqref{eq:hatAh}. 
E.g., for $H=2$ we require that $\lim_{w\rightarrow 0}f_0(w)=\lim_{
w\rightarrow 1}f_0(w)=0$, where $w$ is the mass 
of one of two subgroups. The second domain is defined as
\begin{equation}
\begin{aligned}
\label{eq:DK}\D(\A^K):=\bigg\{&\,f:\,f=\tilde {f}\big\vert_{E_K},\,\tilde{
f}\in C^2\big(\overline {E_K}\big),\\
&\,\sup_{(w,x)\in E_K}\beta_h(w_
h)|\AAA_{h}^K\tilde {f}(w,x)|<\infty ,\,h=1,\ldots ,H\bigg\},\end{aligned}
\end{equation} 
with $E_K$ as in \eqref{eq:EK}.
Note that 
\begin{equation}
\begin{aligned}\label{eq:DK0sub}
\D _0(\A^{K})\subseteq \D (\A^{K}).
\end{aligned}
\end{equation} 
The following result shows that
$(W^K,\mathcal{X}_1^K,\ldots ,\mathcal{X}_H^K)$, with $\mathcal{X}_
h^K$ as in \eqref{eq:Xh}, solves the martingale problem for $\A^K$. 

\begin{theorem}\label{th:AKmpex}
Let $W^K$ be a WF diffusion with generator given by 
$(\ref{eq:hatA0})$ and $(\ref{eq:DA0})$. For 
$h=1,\ldots ,H$ and $\Xt^K_h$  mutually independent WF diffusions with generator \eqref{eq:hatAh}, independent of $W$, let $\mathcal{
X}_h^K$ be as in \eqref{eq:beta}-\eqref{eq:Xh}. 
Then $(W^K,\mathcal{X}_1^K,\ldots ,\mathcal{X}_H^K)$ is a solution of the 
martingale problem for $(\A^{K},\D (\A^{K}))$ as in \eqref{eq:AK}-\eqref{eq:DK}, and hence 
of the martingale problem for $(\A^{K},\D _0(\A^{K}))$ with $\D _0(\A^{K})$ as in \eqref{eq:DK0}. 
\end{theorem}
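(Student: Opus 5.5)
The natural route is the multiparameter random time change machinery of \cite{EK86}, Section 6.2 (cf.~\cite{Kur80}). That is, we view $(W^K,\mathcal{X}^K_1,\ldots,\mathcal{X}^K_H)$ as the time change of the independent family $(Y_0,Y_1,\ldots,Y_H):=(W^K,X^K_1,\ldots,X^K_H)$ by the clocks
\[
\tau_0(t)=t,\qquad \tau_h(t)=\int_0^t\beta_h(W^K(s))\,ds .
\]
Note that $\tau_0$ is $\int_0^t\beta_0\,ds$ and that $\beta_h$ depends on the state only through $Y_0(\tau_0(t))=W^K(t)$. By Lemma~\ref{lem:thetas} and \eqref{eq:entrbdry}, $W^K$ stays in $\Delta_H^\circ$. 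Hence each $\beta_h(W^K(\cdot))$ is continuous and finite, so $\tau_h$ is finite, continuous and strictly increasing, and $\tau_h(t)\to\infty$ since $\beta_h\ge1$.

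First, I would fix $f=\tilde f|_{E_K}\in\D(\A^K)$ and reduce to product functions. Because $\tilde f\in C^2(\overline{E_K})$ can be approximated, together with derivatives up to order two, by polynomials on the compact set $\overline{E_K}$, and sums of products $g_0(w)\prod_h g_h(x_h)$ are dense in that sense, it suffices to handle products $f=\prod_{h=0}^H f_h$. A limiting argument then handles general $f$, with the integrability condition in \eqref{eq:DK} controlling the $\beta_h\AAA^K_h$ terms. For products, the independence of $Y_0,\ldots,Y_H$ gives multiparameter martingales. Concretely, with the multiparameter filtration $\mathcal{F}_{u}=\sigma(Y_l(s_l):s_l\le u_l,\ l=0,\ldots,H)$, each $M_l(u_l)=f_l(Y_l(u_l))-\int_0^{u_l}\AAA_lf_l(Y_l(s))ds$ is a martingale in its own parameter. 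Moreover, $\tau(t)=(\tau_0(t),\ldots,\tau_H(t))$ is a multiparameter stopping time for $\{\mathcal{F}_u\}$, because $\{\tau(t)\le u\}$ is determined by $W^K$ on $[0,u_0]$ and each $\tau_h$ is adapted to $Y_0$ alone.

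Second, I would apply the product rule along the time-changed paths:
\[
\prod_l f_l(Y_l(\tau_l(t)))=\prod_l f_l(Y_l(0))+\sum_l\int_0^t\prod_{m\ne l}f_m(Y_m(\tau_m(s)))\,d\big[f_l(Y_l(\tau_l(s)))\big].
\]
Here the covariations vanish by independence. Using $d\tau_l(s)=\beta_l(W^K(s))ds$, the drift term becomes $\sum_l\beta_l(W^K(s))\AAA_lf(W^K(s),\mathcal{X}^K(s))\,ds=\A^Kf$, which is \eqref{eq:AK}. The remainder is a sum of stochastic integrals against the time-changed martingales $M_l\circ\tau_l$. These are martingales with respect to $\mathcal{G}_t:=\mathcal{F}_{\tau(t)}$ by optional sampling for multiparameter stopping times (\cite{EK86}, Theorem 6.2.2 and Lemma 6.2.x), once they are suitably localized. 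Equivalently, one can quote directly the theorem in \cite{EK86}, Section 6.2, that such time changes solve the martingale problem for $\sum_l\beta_l\AAA_l$. We must still check its hypotheses: that the $\beta_l$ are nonnegative and continuous on the (complete, separable, by \eqref{Delta0metric}) state space $E_K$.

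The main obstacle is the unboundedness of $\beta_h=1/w_h$ near $\partial\Delta_H$. This prevents a direct application of the bounded-rate results and makes integrability of $\int_0^t\beta_h\AAA_h^Kf\,ds$ nontrivial. I would handle it exactly through the condition $\sup\beta_h|\AAA^K_h\tilde f|<\infty$ built into \eqref{eq:DK}, which makes the compensator bounded on bounded time intervals. I would also localize with the stopping times $\sigma_n=\inf\{t:\min_hW^K_h(t)\le 1/n\}$. On $[0,\sigma_n]$ the rates are bounded by $n$, so the standard theorem applies to the stopped process. Since $\sigma_n\to\infty$ a.s.\ by Lemma~\ref{lem:thetas}, we can let $n\to\infty$ using bounded convergence; this is justified because $f$ and $\A^Kf$ are bounded. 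Finally, the claim for $\D_0(\A^K)$ follows immediately from the inclusion \eqref{eq:DK0sub}. To verify that inclusion, note that for $f_0$ in the domain \eqref{eq:DA0} vanishing at $\partial\Delta_H$, $f_0/w_h$ is bounded, because such polynomials contain the factor $w_1\cdots w_H$.
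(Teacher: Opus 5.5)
Your overall route matches the paper's. You use the multiparameter time change of \cite{EK86}, Section 6.2, with bounded rates that agree with $\beta_h$ while $W^K$ stays away from $\partial\Delta_H$. You then pass to the limit by bounded convergence, using $\sup_{E_K}\beta_h|\AAA^K_h\tilde f|<\infty$. The paper truncates the rates, $\beta^\delta_h(w)=1/(w_h\vee\delta)$, applies Theorems 6.2.2 and 6.2.8(a) of \cite{EK86} to the truncated system, and lets $\delta\downarrow0$; on $[0,T]$ the truncated and true clocks coincide once $\delta<\min_{h,\,t\le T}W^K_h(t)$. Your stopping times $\sigma_n$ are the same device. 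To make ``the standard theorem applies to the stopped process'' precise, the natural object is the truncated-rate process with $\beta_h^{(n)}(w)=1/(w_h\vee n^{-1})$, which coincides with yours up to $\sigma_n$; this is exactly the paper's construction.

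The flaw is the order of your steps. You do the $C^2$-density reduction to products \emph{before} localizing, and claim the condition in \eqref{eq:DK} controls that limit. It does not. The polynomial approximants $\tilde f^n$, like general products $\prod_h f_h$, are not in $\D(\A^K)$. The error $\beta_h\AAA^K_h(\tilde f^n-\tilde f)$ is only bounded by $C\|\tilde f^n-\tilde f\|_{C^2}/W^K_h$, and $1/W^K_h$ need not be integrable. For example, in the stationary regime with $\theta_h=1$ one has $W^K_h\sim\mathrm{Beta}(1,\tn-1)$ and $\mathbb{E}[1/W^K_h]=\infty$. So for these functions the unlocalized martingale property is simply not available. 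For the same reason, your final bounded-convergence step $n\to\infty$ works only for $f\in\D(\A^K)$, so the density step must come before it, at the truncated level. The correct order, which is the paper's, is:
\begin{itemize}
\item at a fixed truncation level, get the martingale property for products from Theorem 6.2.8(a);
\item extend it to every $\tilde f\in C^2(\overline{E_K})$ by polynomial density, which is harmless because the truncated rates are bounded;
\item only then remove the truncation for $f\in\D(\A^K)$, using $|\beta^{(n)}_h\AAA^K_hf|\le\beta_h|\AAA^K_hf|\le C$.
\end{itemize}
Separately, your It\^o-type remark that ``covariations vanish by independence'' is not a valid reason as stated, since all the time-changed coordinates are driven by $W^K$ and are not independent. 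What actually carries that step is the citation of Theorem 6.2.8(a).
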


\begin{proof}
In order to fit into the framework of Section 2, Chapter 6 of \cite{EK86},  which is formulated for a countable family of processes, for $h>H $ we add independent WF diffusions $\Xt _h^K$, with 
generator $(\ref{eq:hatAh})$ and parameter $\theta_h=1$, independent of $(W^K,\Xt_1^K,\ldots ,
X_H^K)$. 
Given  $0<\delta\leq 1$, define the functions
\begin{equation}
\begin{aligned}\label{beta delta functions}
\beta_0^{\delta}(w):=
1,\quad\quad 
\beta_h^{\delta}(w_{h}):=\frac{1}{w_{h}\vee\delta},\ h=1,\ldots,H, \quad \quad 
\beta_h^{\delta}(w_{h}):=0,\ h>H ,
\end{aligned}
\end{equation} 
and let
\[\mathcal{X}^{K,\delta}_0(t):=W^K(t),\quad\mathcal{X}_h^{K,\delta}
(t):=\Xt_h^K\bigg(\int_0^t\beta_h^{\delta}(\Xtau_0^{K,\delta})(s)
)d {s}\bigg),\;\;h\geq 1.\]
Define  also
\begin{equation}
\begin{aligned}
\label{AK,delta}\A^{K,\delta}f(w,x):=\Amv_0f(w,x)+\sum_{h=1}^H\beta_
h^{\delta}(w_h)\AAA_{h}^Kf(w,x),\end{aligned}
\end{equation} 
with domain 
\[\D(\A^{K,\delta}):=\bigg\{f:\,f=\tilde {f}\big\vert_{E_K},\,\tilde {
f}\in C^2\big(\overline {E_K}\big)\bigg\}.\]
The vector $\Xtau^{K,\delta}:=(\Xtau^{K,\delta}_0,\mathcal{X}_1^{
K,\delta},\ldots ,\mathcal{X}_H^{K,\delta},\mathcal{X}_{H+1}^{K,\delta}
,\ldots )$ is a solution of the system (6.2.1) in \cite{EK86}, 
with the corresponding notation $Y_0:=W^K$ and 
$Y_h:=\Xt_h^K$ for $h\geq 1$, and where we have applied the trivial 
time-change determined by $\beta_0^{\delta}$ to $W^K$ (the subsequent citations in this proof will be from the same source). It is immediate that 
$\Xtau^{K,\delta}$ is the pathwise unique solution. 
Therefore, by Theorem 6.2.2, 
and following definitions), we have that
$\tau^{\delta}(t):=(\tau_0^{\delta}(t),\,\tau_1^{\delta}(t),\,\tau_
2^{\delta}(t),\,\ldots )$, defined by
\[\tau_h^{\delta}(t):=\int_0^t\beta_h^{\delta}({\mathcal{X}}^{K,\delta}_0(s))
d {s},\]
is an $\big\{{\cal F}_u\big\}_{u\in [0,\infty )^{\infty}}$-stopping time for every $
t\geq 0$ (cf.~also eqn.~(6.2.7). This trivially implies 
that $\Xtau^{K,\delta}$ is a nonanticipating solution (cf.~(6.2.15)). Then, by Theorem 6.2.8 (a),
\[
\nonumber
\prod_{j=0}^Hf_j(\mathcal{X}_j^{K,\delta}(s))-\int_0^t\sum_{h=0}^
H\beta_h^{\delta}(\mathcal{X}_0^{K,\delta}(s))\prod_{j\in \{0,\ldots 
,H\},j\neq h}\;f_j(\mathcal{X}_j^{K,\delta}(s))\,\AAA_{h}^Kf_h(\mathcal{
X}_h^{K,\delta}(s))ds\]
is a martingale for every $f_0$ such that $f_0=\tilde{f}_0 \vert_{\Delta_H^{\circ}}$, $\tilde{f}_0\in C^2(\Delta_H)$ and $f_h\in 
\D (\Asv^K_h)$, $h=1,\ldots,H$. Therefore 
\begin{equation}
\begin{aligned}
f\big(\mathcal{X}_0^{K,\delta}(t),\ldots ,\mathcal{X}_H^{K,\delta}
(t)\big)-\int_0^t\A^{K,\delta}f\big(\mathcal{X}_0^{K,\delta}(s),\ldots 
,\mathcal{X}_H^{K,\delta}(s)\big)d {s}\label{eq:deltamg}\end{aligned}
\end{equation} 
is a martingale for every $f$ that is a linear combination of products as above.
Since polynomials are dense in 
$C^2\big(\overline {E_K}\big)$ (in 
the norm $\|f\|:=\sum_{|\lambda |\leq 2}\sup_{\overline {E_K}}|D^{
\lambda}f(w,x_1,x_2,\ldots .,x_H)|$, for $\lambda$ a 
multi-index), it is also a martingale for every 
$f\in\D(\Amv^{K,\delta})$.

Let now $\{\delta_n\}_{n\ge 1}\subset\R_{+}$ be a sequence  decreasing to 
$0$. As $n\rightarrow\infty$, for each
$h=1,\ldots ,H$ the sequence $\beta_h^{\delta_n}(W^K(\cdot ))$ converges uniformly over compact time intervals to $
\beta_h(W^K(\cdot ))$, almost surely. 
Then also $(W^K,\mathcal{X}_1^{K,\delta_n},\ldots ,\mathcal{X}_H^{K
,\delta_n})$ 
converges uniformly over compact time intervals to 
$(W^K,\mathcal{X}_1^K,\ldots ,\mathcal{X}_H^K)$,  
almost surely,
and for every $f\in\D(\A^K)$, we have, as $n\rightarrow\infty$, the convergence
\[\A^{K,\delta_n}f(W^K(\cdot ),\mathcal{X}_1^{K,\delta_n}(\cdot ),\ldots 
,\mathcal{X}_H^{K,\delta_n}(\cdot ))\rightarrow\A^Kf(W^K(\cdot ),\mathcal{
X}_1^K(\cdot ),\ldots ,\mathcal{X}_H^K(\cdot )),\]
in the bounded and pointwise sense, almost surely. 
Therefore, almost surely, for each $t\geq 0$, the martingale 
\[f(W^K(t),\mathcal{X}_1^{K,\delta_n}(t),\ldots ,\mathcal{X}_H^{K,\delta_
n}(t))-\int_0^t\A^{K,\delta_n}f(W^K(s),\mathcal{X}_1^{K,\delta_n}(s
),\ldots ,\mathcal{X}_H^{K,\delta_n}(s))d {s}\]
converges boundedly and pointwise to 
\[f(W^K(t),{\mathcal{X}}_1^K(t),\ldots,{\mathcal{X}}_H^K(t))-\int_0^t\A^{K}f(W^K(s),{\mathcal{X}}_1^K(s),\ldots,{\mathcal{X}}_H^K(
s))d {s}\]
and the limit is a martingale.
\end{proof}

The following result adds uniqueness for the solution in Theorem \ref{th:AKmpex}. 
Denote by $\PPP (G)$ the space of probability measures on a generic metric space $
G$ and by $D_{G} [0,\infty)$ the set of right continuous functions with left hand limits from $[0,\infty)$ to $G$. 
We will also use the notation for product measures $\bigtimes_{n=1}^m\,\mu_n:=\mu_1\times\cdots\times\mu_m$. 
Finally, for a set of real valued functions $\cal G$, $\rm{span}\big(\cal G\big)$ denotes the linear space of finite linear combinations of functions of $\cal G$.

\begin{theorem}\label{th:AKunico}
For each $\mu\in {\cal P}(E_K)$ of the form $\mu =\bigtimes_{h=0}^
H\,\mu_h$,
there is at most one solution of the 
martingale problem for $(\A^{K},\D_0(\A^{K}))$ with initial distribution 
$\mu$, and hence 
of the martingale problem for $(\A^{K},\D (\A^{K}))$ with initial 
distribution $\mu$. 
\end{theorem}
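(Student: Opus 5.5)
The plan is to reverse the construction of Theorem \ref{th:AKmpex}: undo the multiparameter time change and reduce uniqueness to uniqueness for the component martingale problems of $\Asv_0$ and $\AAA_h^K$, $h=1,\ldots,H$. I would use the uniqueness part of the multiparameter time-change theory of Section 6.2 of \cite{EK86} (Theorem 6.2.8(b) and the surrounding results).

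\textbf{Step 1: the mark-mass component.} Let $(W,\mathcal{X}_1,\ldots,\mathcal{X}_H)$ be any solution of the martingale problem for $(\A^K,\D_0(\A^K))$ with initial law $\mu=\bigtimes_{h=0}^H\mu_h$. The first task is to show that $W$ solves the martingale problem for $\Asv_0$. Taking $f_1=\cdots=f_H\equiv1$ gives $\A^Kf=\Asv_0f_0$, but only for $f_0$ vanishing on $\Delta_H\setminus\Delta_H^\circ$. I would localize as follows. For $\delta>0$ let $\sigma_\delta$ be the exit time of $W$ from $\{w:\ w_h>\delta\ \forall h\}$. A polynomial $p$ can be multiplied by a $C^2$ cutoff that equals $1$ on $\{w_h\ge\delta\}$ and vanishes near the boundary; this gives a function in $\D_0$ agreeing with $p$ before $\sigma_\delta$. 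Hence the stopped martingale property holds for polynomials. Combining uniqueness for $\Asv_0$ (Lemma \ref{lem:thetas} and the uniqueness stated around \eqref{eq:DA0}) with $\sigma_\delta\to\infty$ as $\delta\to0$, which holds because the boundary is of entrance type, identifies the law of $W$.

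\textbf{Step 2: inverting the clocks.} Define $\tau_h(t):=\int_0^t\beta_h(W(s))ds$, which is continuous and strictly increasing, with $\tau_h(t)\ge t\to\infty$ since $\beta_h\ge1$. Let $\gamma_h:=\tau_h^{-1}$ and set $Y_0:=W$ and $Y_h(u):=\mathcal{X}_h(\gamma_h(u))$. Because $\tau_h(\infty)=\infty$, each $Y_h$ is defined for all $u\ge0$; if one prefers, one can still append independent copies beyond $\tau_h(\infty)$ as in the proof of Theorem \ref{th:AKmpex}.

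\textbf{Step 3: independence and identification of the components.} Using product test functions $f=\prod_{h=0}^H f_h$ in $\D_0(\A^K)$ and multiparameter optional sampling, I would show that
\[
\prod_{h=0}^H f_h(Y_h(u_h))-\sum_{h=0}^H\int_0^{u_h}\AAA_h^Kf_h(Y_h(r))\prod_{j\ne h}f_j(Y_j(u_j))\,dr
\]
is a multiparameter martingale in $(u_0,\ldots,u_H)$, and then appeal to \cite{EK86}, Theorem 6.2.8(b). Together with the product form of $\mu$, this yields that $Y_0,Y_1,\ldots,Y_H$ are independent and that $Y_h$ solves the martingale problem for $\AAA_h^K$ for each $h$. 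These martingale problems are well posed, since $\AAA_h^K$ is a WF generator on $\Delta_K$ (\cite{EK86}, Theorem 8.2.8). The joint law of $(Y_0,\ldots,Y_H)$ is therefore determined. Since $\mathcal{X}_h(t)=Y_h\big(\int_0^t\beta_h(Y_0(s))ds\big)$ is a measurable functional of $(Y_0,Y_h)$, the law of the solution is determined as well.

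\textbf{Consequence for the larger domain.} Uniqueness for $\D(\A^K)$ then follows from \eqref{eq:DK0sub}, because any solution for $\D(\A^K)$ is a solution for $\D_0(\A^K)$.

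\textbf{Main obstacle.} I expect the hard part to be Step 3, namely verifying the hypotheses of Theorem 6.2.8(b). Two issues arise there. First, the time-changed filtration must be checked to be such that $\tau(t)$ is a multiparameter stopping time. Second, the boundary condition in $\D_0$ means $f_0$ cannot be taken constant; I would handle this with the same $\delta$-localization as in Step 1, stopping at $\sigma_\delta$, applying the theorem to the clocks $\beta_h^\delta$, and letting $\delta\to0$.
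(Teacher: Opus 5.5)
Your proof is correct in outline but takes a different route from the paper for the key step. The common backbone is this. Both arguments obtain, via \cite{EK86}, Theorem 6.2.8(b), a version of the solution that is a nonanticipating solution of the time-change system driven by independent copies of $W^K$ and the $X_h^K$, and then conclude from pathwise uniqueness of that system. Both also pass from $\D_0(\A^K)$ to $\D(\A^K)$ via \eqref{eq:DK0sub}.

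\textbf{How the two obstructions are removed.} You correctly isolate them: the unbounded rates $\beta_h=1/w_h$, and the requirement that $f_0$ vanish on $\Delta_H\setminus\Delta_H^\circ$.

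The paper removes both at once with a single global time change (\cite{EK86}, Proposition 6.2.10) by $\alpha(w)=(\prod_h w_h)^{-1}$. The new rates $\alpha^{-1}=\prod_h w_h$ and $\alpha^{-1}\beta_h=\prod_{k\neq h}w_k$ are bounded by $1$. Because $\alpha^{-1}$ vanishes on the boundary, the martingale property of the time-changed process extends, by bounded pointwise approximation, to products with any monomial $f_0$, including $f_0\equiv 1$. Theorem 6.2.8(b) then applies verbatim, and Proposition 6.2.10 undoes the time change.

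Your localization with truncated clocks $\beta_h^\delta$ also works, but it needs more machinery. Theorem 6.2.8(b) concerns global solutions of the martingale problem for $\A^{K,\delta}$, not stopped ones. So you must first continue $X(\cdot\wedge\sigma_\delta)$ after $\sigma_\delta$ using the explicitly constructed solutions of the truncated problem. These exist from every starting point and depend continuously on it, by the construction in Theorem \ref{th:AKmpex}. You then deduce uniqueness of the stopped problem as in Section 4.6 of \cite{EK86}, and only then let $\delta\to0$.

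\textbf{Smaller corrections.}
\begin{itemize}
\item The functions $\chi p$ are not in $\D_0(\A^K)$, only in the bounded pointwise closure of its span. Approximate $\chi p/\prod_h w_h$ in $C^2$ by polynomials $q_n$ and use $\prod_h w_h\,q_n$, as the paper does.
\item $\sigma_\delta\to\infty$ follows directly from the solution having paths in $D_{E_K}[0,\infty)$: on bounded time intervals $W$ stays in a compact subset of $\Delta_H^\circ$. So the entrance-boundary argument is unnecessary, and Step 1 is redundant.
\item In Step 3 the arrow is reversed. The multiparameter martingale property of $Y$ does not follow from the one-parameter martingale problem by optional sampling; optional sampling is the mechanism behind the converse, Theorem 6.2.8(a). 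That property is essentially what 6.2.8(b) supplies, so Step 3 should simply invoke that theorem for the truncated problem.
\end{itemize}
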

\begin{proof}
Every solution $(W^K,\mathcal{X}_1^K,\ldots,\mathcal{X}_H^K)$ of the martingale
problem for $(\A^{K},\D_0(\A^{K}))$  has a version with paths in $D_{E_K} [0,\infty)$, which we still denote by $(W^K,\mathcal{X}_1^K,\ldots,\mathcal{X}_H^K)$. See Theorem 4.3.6 of \cite{EK86}. Let $(W^K,\mathcal{X}_1^K,\ldots,\mathcal{X}_H^K)$ have initial distribution $\mu$.
To place the argument within the multiparameter random time change framework of \cite{EK86}, which is formulated for a countable family of processes, 
we embed this $(H\!+\!1)$-dimensional process into an infinite dimensional process by adding
frozen auxiliary coordinates.
More precisely, for each $h>H$, fix an arbitrary $\mu_h\in\PPP(\Delta_K)$ and define
$\mathcal{X}_h^K(t):=\mathcal{X}_h^K(0)$ for all $t\ge 0$, where $\mathcal{X}_h^K(0)\sim\mu_h$
and $\{\mathcal{X}_h^K(0)\}_{h>H}$ are mutually independent and independent of
$(W^K,\mathcal{X}_1^K,\ldots,\mathcal{X}_H^K)$.
Set also $\mathcal{X}_0^K:=W^K$.
Then 
$\mathcal{X}^K:=(\mathcal{X}^K_0,\mathcal{X}_1^K,\ldots ,\mathcal{
X}_H^K,\Xtau^K_{H+1},\ldots )$ 
is a solution of the martingale problem for the operator  
\begin{equation}
\begin{aligned}
\label{calAK}
\A_{\infty}^Kf(w,x):=\Amv_0f(w,x)+\sum_{h\geq 1}\beta_
h(w)\AAA_{h}^Kf(w,x),
\end{aligned}
\end{equation} 
with $\Amv_0$, 
$\AAA_{h}^K$ as in \eqref{eq:AK} and $\beta_h$ as in $(\ref{eq:beta}
)$ for 
$h\leq H$, $\AAA_{h}^K$ as in 
$(\ref{eq:hatAh})$ with parameter $\theta_h=1$ and $\beta_h\equiv 0$ for $h>H$,  
with domain 
\begin{equation}
\begin{aligned}
\nonumber
\D(\A_{\infty}^K):=\bigg\{f=f_0\prod_{h\in I}f_h:\;&f_0\in\D(\Asv_
0),\ \lim_{w\rightarrow w^0}f_0(w)=0,\forall w^0\in\Delta_H\setminus\Delta^{
\circ}_H,\\
&f_h\in\D(\AAA_{h}^K),\,h\in I,\,I\subseteq \{1,2,\ldots \},\,I\mbox{\rm \ finite}\bigg
\}.
\end{aligned}
\end{equation} 
The thesis would follow immediately if we could apply Theorem 6.2.8 of \cite{EK86}, but this is not possible because the functions $\beta_h$ are unbounded. However, we show next that we can apply Proposition 6.2.10 of  \cite{EK86}, which allows to reduce to bounded random time-change rates. Define 
\[\alpha (w):=\bigg(\prod_{h=1}^Hw_h\bigg)^{-1},\qquad w\in\Delta^{\circ}_H
,\]
and let $\eta$ be the stochastic process defined pathwise by 
\[\int_0^{\eta (t)}\alpha (\Xtau^K_0(s))ds=t,\qquad t\geq 0.\]
Then, almost surely, 
\[\int_0^T\alpha (\Xtau^K_0(s))ds<\infty ,\qquad\forall T\geq 0,\]
so that $\lim_{t\rightarrow\infty}\eta (t)=\infty$ almost surely and the assumptions of 
Proposition 6.2.10 of \cite{EK86} are satisfied. Let 
\[\mathcal{X}^{K,\alpha}(t):=\Xtau^K(\eta (t)),\quad t\geq 0.\]
Then 
\[\eta (t)=\int_0^t\frac{1}{\alpha ({\mathcal{X}}^{K,\alpha}_0(s))}d s\]
and 
\begin{equation}
\begin{aligned}
f(\mathcal{X}^{K,\alpha}(t))-\int_0^t\frac{1}{\alpha (\mathcal{X}^{
K,\alpha}_0(s))}\A_{\infty}^Kf\,(\mathcal{X}^{K,\alpha}(s))d {s}\label{eq:alphamp}\end{aligned}
\end{equation} 
is a martingale for every $f\in\D(\A_{\infty}^K)$. Let 
\[\A_{\infty,b}^Kf(w,x):=\alpha (w)^{-1}\Amv_0f(w,x)+\sum_{h\geq 1}\alpha (w)^{-1}\beta_h(w)\AAA_{h}^Kf(w,x)
,\]
$ $with domain 
\begin{equation}
\begin{aligned}
\nonumber
\label{eq:basicD}\D\big(\A_{\infty,b}^K&\big):=\bigg\{f=f_0\prod_{
h\in I}f_h:\;\,f_0\in\D(\Asv_0),\ f_h\in\D(\AAA_{h}^K),\,h\in I,\ I
\subseteq \{1,2,\ldots \},\,I\mbox{\rm \ finite}\bigg\}.
\end{aligned}
\end{equation} 
 
Every $f\in\D(\A_{\infty,b}^K)$ can be 
approximated pointwise and boundedly by functions 
$\{f^n\}\subseteq\rm{span}\big(\D(\A_{\infty}^K)\big)$ in such a way that, defining $\A_{\infty}^Kf^n$ by linearity, 
$\{\alpha^{-1}\A_{\infty}^Kf^n\}_{n\ge 1}$ converges 
pointwise and boundedly to $\A_{\infty,b}^K f$. (This follows from the fact that, for each $\tilde{f}_0\in C^2(\Delta_H)$, for each $n\in \N$, the function 
\begin{equation}
\nonumber
\tilde{f}_0^n(w):=\frac {\tilde{f}_0(w)\rho (nw_1\ldots w_H-1)}{w_1\ldots w_H},
\end{equation}
wth $\rho:\R\rightarrow [0,1]$ a smooth, nondecreasing function such that $\rho (u)=0$ for $u\leq 0$, $\rho (u)=1$ for $u\geq 1$, can be approximated by polynomials in the norm $\|f\|:=\sum_{|\lambda |\leq 2}\sup_{\overline {E_K}}|D^{
\lambda}f(w,x_1,x_2,\ldots .,x_H)|$, for $\lambda$ a 
multi-index.) 

Then $\mathcal{X}^{K,\alpha}$ is a solution of the martingale problem for $\A_{\infty,b}^K$. 
Since $\alpha^{-1}$ and $\alpha^{-1}\beta_
h$, for $h\geq 1$, are bounded functions, we can apply Theorem 6.2.8 (b) of \cite{EK86} 
(cfr.~also page 314) to obtain that there is a 
version $\tilde{\Xtau}^{K,\alpha}$ of $\mathcal{X}^{K,\alpha}$ that is a nonanticipating solution of 
\[\tilde{\Xtau}^{K,\alpha}_0(t)={\tilde W}^K\bigg(\int_0^t\frac{1}{\alpha 
(\tilde{\Xtau}_0^{K,\alpha}(s))}ds\bigg),\quad\tilde{\Xtau}^{K,\alpha}_
h(t)=\tilde{\Xt}^K_h\bigg(\int_0^t\frac{\beta_h(\tilde{\Xtau}_0^{
K,\alpha}(s))}{\alpha (\tilde{\Xtau}_0^{K,\alpha}(s))}ds\bigg),\;\;
h\geq 1,\]
where ${\tilde W}^K$ and $\big\{\tilde{\Xt}_h^K\big\}_{h\geq 1}$ are independent processes with 
generators $\Asv_0$ and $\big\{\AAA_{h}^K\big\}_{h\geq 1}$, and initial distributions $
\mu_0$ 
and $\{\mu_h\}_{h\geq 1}$.  
Applying Proposition 6.2.10 of \cite{EK86} to $\tilde{\Xtau}^{K,\alpha}$, we obtain that 
there is a version $\tilde{\Xtau}^K$ of $\Xtau^K$ that is nonanticipating and satisfies 
\begin{equation}\label{eq:nonantunq}\tilde{\Xtau}^K_0(t)=\tilde {W}^K(t),\quad\tilde{\Xtau}^K_h(t)=
\tilde{\Xt}_h^K\bigg(\int_0^t\beta_h(\tilde{\Xtau}_0^K(s))ds\bigg
),\;\;h\geq 1.\end{equation}
If we view \eqref{eq:nonantunq} as an equation in the unknown $\big\{\tilde{\Xtau}_h^K\big\}_{h\geq 0}$, given ${\tilde W}^K$ and $\big\{\tilde{\Xt}_h^K\big\}_{h\geq 1}$, then it clearly has a pathwise unique solution, so that, in particular, the law of $\tilde{\Xtau}^{K}$, i.e. the law of $\Xtau^{K}$, is uniquely determined. 
\end{proof}

\begin{corollary}\label{th:Kmixt}
For every $\mu\in {\cal P}(E_K)$, the martingale problem for $\A^{K}$,  $\A^{K}$ as in \eqref{eq:AK}-\eqref{eq:DK}, with initial distribution $\mu$, has one and only one solution. The law of the solution can be represented as 
\begin{equation}\label{eq:Kmixt}
\int_{E_K}P^K_{(w,x)}\, d\mu ,
\end{equation}
where $P^K_{(w,x)}$ is the law of the solution of the martingale problem for $\A^{K}$ with initial distribution $\delta_{(w,x)}$
\end{corollary}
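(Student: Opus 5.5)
Existence for a general initial law $\mu$ comes from mixing product-initial solutions, and uniqueness from the standard principle that uniqueness for all degenerate initial laws $\delta_{(w,x)}$ implies uniqueness for every initial law.

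First, existence for each $\delta_{(w,x)}$, $(w,x)\in E_K$. A point mass is of product form, $\delta_{(w,x)}=\delta_w\times\delta_{x_1}\times\cdots\times\delta_{x_H}$. Start $W^K$ at $w\in\Delta_H^\circ$ and each $\Xt_h^K$ at $x_h$, independently. Theorem \ref{th:AKmpex} then gives a solution of the martingale problem for $(\A^K,\D(\A^K))$ with this initial law. By Theorem \ref{th:AKunico}, this solution is the unique one, and we call its law $P^K_{(w,x)}$.

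Second, measurability of $(w,x)\mapsto P^K_{(w,x)}$. Make the construction pathwise on a single probability space: take $W^{K,w}$ as a measurable function of $w$ and a driving noise, and likewise $\Xt_h^{K,x_h}$. For instance, use strong solutions of the WF SDEs, which are pathwise unique and continuous in the initial condition. Then $\mathcal{X}_h^K$ in \eqref{eq:Xh} is a measurable functional of these processes, so $(w,x)\mapsto P^K_{(w,x)}(B)$ is measurable for every Borel $B\subset D_{E_K}[0,\infty)$.

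With measurability in hand, define $P^\mu:=\int_{E_K}P^K_{(w,x)}\,d\mu$. Mixtures of solutions solve the martingale problem, because the martingale property is expressed by integrals of bounded functionals that are linear in the law. The test functionals are bounded here: for $f\in\D(\A^K)$, the function $\A^K f$ is bounded on $E_K$, since the definition of $\D(\A^K)$ imposes $\sup\beta_h|\AAA_h^K\tilde f|<\infty$. Hence $P^\mu$ is a solution with initial distribution $\mu$, which proves existence together with the representation \eqref{eq:Kmixt}.

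Third, uniqueness for general $\mu$. I would apply the result of \cite{EK86}, Theorem 4.4.2 (and its proof), which states that if uniqueness holds for every initial law $\delta_{(w,x)}$, then it holds for every initial law. The mechanism is regular conditional distributions: conditioning an arbitrary solution $P$ on $\mathcal{F}_0$ yields, for $\mu$-a.e.\ $(w,x)$, a solution started at $\delta_{(w,x)}$, hence equal to $P^K_{(w,x)}$, so $P=P^\mu$. This requires $E_K$ to be complete and separable, which holds under the metric \eqref{Delta0metric} on $\Delta_H^\circ$ combined with the Euclidean metric on $\Delta_K^H$. It also requires the domain to be a countably determined family, which holds by separability of $\C^2$ via polynomial approximations. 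Solutions have càdlàg versions by Theorem 4.3.6 of \cite{EK86}, so the argument runs on $D_{E_K}[0,\infty)$.

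The main obstacle is the conditioning step. Besides the separability of the domain, one must check that the martingale property survives the passage to the regular conditional law. This in turn needs countably many $f$ and rational times, together with the boundedness of $\A^K f$ noted above.
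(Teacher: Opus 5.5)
Your proof is correct. Its existence half is the paper's argument: once $(w,x)\mapsto P^K_{(w,x)}$ is measurable, mixtures of the Dirac-started solutions from Theorems \ref{th:AKmpex}--\ref{th:AKunico} solve the problem.

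For uniqueness you take a different route. The paper appeals to Theorem 4.5.19 of \cite{EK86} and verifies its hypotheses through Theorem 4.5.11(b) and Lemma 4.5.13. That verification is a compact-containment estimate built from Lyapunov-type functions $f_n$ on the balls $K^\circ_n$ of the complete metric \eqref{Delta0metric}, and the theorem then returns uniqueness for every $\mu$ together with \eqref{eq:Kmixt}. You instead disintegrate an arbitrary solution over $\mathcal F_0$ using regular conditional distributions. This needs only that $E_K$ be Polish and that the domain be countably determined, with no tightness estimate at all. Your route is lighter and carries over verbatim to Corollary \ref{th:mixt}, where $\D(\Abb)$ is already a countable family. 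The paper's route relies on a packaged well-posedness theorem. Alongside it the paper records weak continuity, not just measurability, of $P^K_{(w,x)}$ and of its analogue $P_{(w,x)}$, and it reuses the latter in Corollary \ref{th:geninit}.

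Three points to tighten.

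First, do not base measurability on strong solutions. Pathwise uniqueness for multidimensional WF SDEs, let alone continuous dependence on the initial point, is not something you can simply cite: the Yamada--Watanabe criterion is one-dimensional, and the coefficients are only H\"older-$1/2$ at the boundary. It is also unnecessary. The laws of $W^K$ and $X^K_h$ are weakly continuous in their starting points (Feller property). The map $(W,X_1,\ldots,X_H)\mapsto\big(W,X_h(\int_0^\cdot\beta_h(W(s))\,ds)\big)_{h}$ is continuous at every $W$ whose path stays in $\Delta^\circ_H$. Hence $(w,x)\mapsto P^K_{(w,x)}$ is weakly continuous, which is what the paper uses.

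Second, carry out the countable-determinacy step on $\D_0(\A^K)$ rather than on $\D(\A^K)$. The side condition $\sup\beta_h|\AAA^K_h\tilde f|<\infty$ in \eqref{eq:DK} is not stable under $C^2$ approximation, so separability of $C^2$ alone does not give bounded pointwise convergence of $\A^K f^n$. On $\D_0(\A^K)$ the weights $\beta_h f_0$ are bounded by $1$, because $f_0$ is a monomial with all exponents $\ge 1$. Theorem \ref{th:AKunico} already gives uniqueness from Dirac initial laws there, and every solution for $\D(\A^K)$ is a solution for $\D_0(\A^K)$ by \eqref{eq:DK0sub}.

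Third, Theorem 4.4.2 of \cite{EK86} assumes uniqueness for all initial laws. The reduction to Dirac initial laws that you describe is a separate conditioning result in the same section (Theorem 4.4.6 there).
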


\begin{proof}
By Theorem 4.3.6 of \cite{EK86}, we can reduce to solutions with paths in $D_{E_K} [0,\infty)$. 
In order to see that the martingale problem for $\A^{K}$ with initial distribution $\mu$ has at least one solution for every $\mu\in {\cal P}(E_K)$, 
one can observe that $P^K_{(w,x)}$ is continuous in the weak convergence topology (this follows from Theorems \ref{th:AKmpex} and \ref{th:AKunico} and the properties of WF diffusions), hence measurable, and that any probability measure on $D_{E_K} [0,\infty)$ of the form \eqref{eq:Kmixt} is a solution. 
Uniqueness and \eqref{eq:Kmixt} follow from Theorem 4.5.19 of \cite{EK86}. 
In order to verify the assumptions of Theorem 4.5.19, one can use Theorem 4.5.11 (b) and Lemma 4.5.13 of \cite{EK86} with 
\begin{align}
\nonumber
K_n:=&\,K_n^{\circ}\times {\simplex^H},\quad n\geq n_0\;{\rm  sufficiently}\,{\rm  large,}\\
K_R^{\circ}:=&\,\{w\in\Delta_H^{\circ}:d(w,w^*)\leq R \},\quad w^*:=(H^{-1},\ldots,H^{-1}),
\end{align}
{where $d$ is given by \eqref{Delta0metric}, and}
\begin{align}
\nonumber
f_n(w,x)=f_n(w):=1+\log(1+(n+\eta)^2)-\log(1+d(w,w^*)^2),\quad {\rm for} \;w\in K_{n+\eta}^{\circ},\quad\quad\\
f_n(w)=0,\;\; {\rm  for}\, w\in \Delta_H^{\circ}\setminus K_{n+\eta+1}^{\circ},\quad 0\leq f_n(w)\leq 1,\;\; {\rm  for}\, w\in \Delta_H^{\circ}\setminus K_{n+\eta}^{\circ},\quad  f_n\in C^2(\Delta_H^{\circ}).
\end{align}
\end{proof}


\subsection{Blockwise skew-product representation}\label{skprodform}

Let $\Delta_{HK}$ be defined as in \eqref{findimsimplex}. Recall that we view all vectors as row 
vectors. For $z\in\Delta_{HK}$, let 
\begin{equation}\label{eq:zHK}
z=(z_1,\ldots ,z_H),\quad z_h=(z_{h,1},\ldots ,z_{h,K}),\quad 
z_{h,i}:=z_{(h-1)K+i},
\end{equation} 
and
\begin{equation}\label{eq:DeltaK2}
\Delta^{H,\circ}_{HK}:=\bigg\{(z_1,\ldots ,z_H)\in\Delta_{HK}:\,
\sum_{i=1}^Kz_{h,i}>0,\ h=1,\ldots ,H\bigg\},
\end{equation} 
and define the map $S:E_K\rightarrow\Delta^{H,{\circ}}_{H
K}$ ($E_K$ as in \eqref{eq:EK}) by  
\begin{equation}
\begin{aligned}S(w,x):=wx=(w_{1}x_{1},\ldots,w_{H}x_{H}),\quad \quad 
w_{h}x_{h}=(w_{h}x_{h,1},\ldots,w_{h}x_{h,K}).
\label{eq:Smap}
\end{aligned}
\end{equation} 

The following lemma is immediate.

\begin{lemma}\label{th:K-Sinv}
$S$ is bijective and $S^{-1}$ is given by 
\begin{equation}
\begin{aligned}
S^{-1}(z)=\Bigg(\bigg(\sum_{i=1}^Kz_i,\ldots ,\sum_{i=1}^Kz_{H,i}\bigg
),\frac{z_1}{\sum_{i=1}^Kz_{1,i}},\ldots ,\frac{z_H}{\sum_{i=1}^Kz_{
H,i}}\Bigg).\label{eq:S-1map}\end{aligned}
\end{equation} 
$S$ and $S^{-1}$ are continuous. 
\end{lemma}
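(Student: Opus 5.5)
The plan is to verify directly that the map in \eqref{eq:S-1map}, call it $T:\Delta^{H,\circ}_{HK}\to E_K$, is well defined and is a two-sided inverse of $S$, and then to check continuity of both maps coordinatewise.

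\emph{Well-definedness.} First we check that $S$ maps $E_K$ into $\Delta^{H,\circ}_{HK}$. For $(w,x)\in E_K$ we have $w_hx_{h,i}\ge 0$, and, since $\sum_i x_{h,i}=1$,
\[
\sum_{h}\sum_i w_hx_{h,i}=\sum_h w_h=1 .
\]
Moreover $\sum_i w_hx_{h,i}=w_h>0$ because $w\in\Delta_H^\circ$, so $S(w,x)\in\Delta^{H,\circ}_{HK}$. Conversely, let $z\in\Delta^{H,\circ}_{HK}$ and set $w_h:=\sum_i z_{h,i}$. Then $w_h>0$ by the definition \eqref{eq:DeltaK2} and $\sum_h w_h=1$, so $w\in\Delta_H^\circ$. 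Each $x_h:=z_h/w_h$ has nonnegative entries summing to one, so $x_h\in\Delta_K$ and $T(z)\in E_K$.

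\emph{Inverse relations.} We compute $S(T(z))=(w_1 z_1/w_1,\ldots,w_H z_H/w_H)=z$. In the other direction, $T(S(w,x))$ has first component $\big(\sum_i w_hx_{h,i}\big)_h=w$ and remaining components $w_hx_h/w_h=x_h$. Hence $S$ is bijective with $S^{-1}=T$.

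\emph{Continuity.} $S$ is continuous because its components $w_hx_{h,i}$ are polynomials. $S^{-1}$ is continuous on $\Delta^{H,\circ}_{HK}$ because its components are sums and quotients of coordinates with denominators $\sum_i z_{h,i}$ bounded away from zero locally. The only point needing a remark, and the closest thing to an obstacle, is which topology on $E_K$ is meant. If $\Delta_H^\circ$ carries the metric $d$ of \eqref{Delta0metric}, then continuity of $S^{-1}$ requires continuity of $z\mapsto 1/\sum_i z_{h,i}$, which holds on $\Delta^{H,\circ}_{HK}$. Continuity of $S$ then follows because $d$-convergence implies Euclidean convergence. So the conclusion holds with either the Euclidean topology or the one induced by $d$, since the two coincide on $\Delta_H^\circ$.
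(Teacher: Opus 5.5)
Your proposal is correct and takes essentially the paper's approach: the paper gives no argument beyond declaring the lemma immediate. Your direct check that the displayed map is a two-sided inverse of $S$, together with coordinatewise continuity, is exactly the omitted verification. That includes your observation that the metric $d$ in \eqref{Delta0metric} induces the Euclidean topology on $\Delta_H^\circ$.
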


Let $\B^K$ be the generator of an $HK$-type WF diffusion. 
In view of \eqref{eq:zHK},  we can write $B^K$  as 
\begin{equation}
\begin{aligned}\label{eq:BK}
\B^Kf(z_{1,1},\ldots ,z_{H,K})
=&\,\frac{1}{2}\sum_{h,k=1}^H\sum_{i,
j=1}^Kz_{h,i}(\delta_{h,k}\delta_{i,j}-z_{k,j})\frac{\partial^2f}{
\partial z_{h,i}\partial z_{kj}}\\
&\,+\frac{1}{2}\sum_{h=1}^H\sum_{i=1}^K
\bigg(\frac{\theta_h}{K}-\tn z_{h,i}\bigg)\frac{\partial f}{\partial 
z_{h,i}},\\
\D(\B^K)=&\,C^2(\Delta_{HK}).
\end{aligned}
\end{equation} 
As is well known, 
the martingale problem for $\B^K$ is well posed (see, for 
instance, Theorems 8.2.8 and 4.4.1 in \cite{EK86}). 
Next, we formalize the connection between \eqref{eq:AK} and \eqref{eq:BK}. We first show a preliminary result, 
used in the subsequent theorem. 

\begin{lemma}\label{prop:generator mapping}
Let $\A^{K}$ be as in \eqref{eq:AK} and $\B^K$ as in \eqref{eq:BK}. Then, for $f\in \D (\B^K)$ and $S$ as in \eqref{eq:Smap}, $f\circ S \in \D (\A^K)$ as in \eqref{eq:DK} and 
\begin{equation}
\begin{aligned}\label{AK=BK}
\A^{K}(f\circ S)=(\B^Kf)\circ S.
\end{aligned}
\end{equation} 
 \end{lemma}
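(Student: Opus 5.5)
The plan is a direct chain-rule computation, followed by a check of the two conditions defining $\D(\A^K)$ in \eqref{eq:DK}. Take $f\in C^2(\Delta_{HK})$ with a $C^2$ extension $\tilde f$ to $\R^{HK}$. Then $\tilde f\circ\tilde S$, with $\tilde S(w,x)=(w_hx_{h,i})$ extended to all of $\R^H\times(\R^K)^H$, is $C^2$. So $f\circ S$ is the restriction of a $C^2$ function on $\overline{E_K}$, and the first requirement holds.

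Next I compute the derivatives. With $g:=f\circ S$ and $\partial_{h,i}:=\partial f/\partial z_{h,i}$ evaluated at $z=S(w,x)$, the chain rule gives
\[
\frac{\partial g}{\partial w_h}=\sum_i x_{h,i}\partial_{h,i},\quad
\frac{\partial g}{\partial x_{h,i}}=w_h\partial_{h,i},\quad
\frac{\partial^2 g}{\partial w_h\partial w_k}=\sum_{i,j}x_{h,i}x_{k,j}\partial_{h,i}\partial_{k,j}f,\quad
\frac{\partial^2 g}{\partial x_{h,i}\partial x_{h,j}}=w_h^2\,\partial_{h,i}\partial_{h,j}f .
\]
Substituting into $\A^K g=\Amv_0 g+\sum_h w_h^{-1}\AAA_h^K g$ gives the following.

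The $x$-part contributes
\[
\tfrac12\sum_h\sum_{i,j}w_hx_{h,i}(\delta_{ij}-x_{h,j})\partial^2_{hi,hj}f+\tfrac{\theta_h}{2}\sum_i(1/K-x_{h,i})\partial_{h,i}f .
\]
The $w$-part contributes
\[
\tfrac12\sum_{h,k}\sum_{i,j}w_h(\delta_{hk}-w_k)x_{h,i}x_{k,j}\partial^2_{hi,kj}f+\tfrac12\sum_h(\theta_h-\tn w_h)\sum_ix_{h,i}\partial_{h,i}f .
\]

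Now I set $z_{h,i}=w_hx_{h,i}$ and collect terms. In the second-order terms, the diagonal-block part $w_hx_{h,i}x_{h,j}$ of the $w$-part cancels the $-w_hx_{h,i}x_{h,j}$ from the $x$-part. What remains is $z_{h,i}\delta_{ij}\delta_{hk}-z_{h,i}z_{k,j}$, which matches \eqref{eq:BK}. In the first-order terms, $\theta_hx_{h,i}$ cancels $-\theta_hx_{h,i}$. This leaves $\theta_h/K-\tn z_{h,i}$, as required. Hence $\A^K(f\circ S)=(\B^Kf)\circ S$ on $E_K$.

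Finally I check the boundedness condition $\sup_{E_K}\beta_h|\AAA_h^K (f\circ S)|<\infty$. By the formulas above, $\AAA_h^K(f\circ S)=w_h\cdot(\text{bounded continuous expression in }z,x)$: the second-order part carries a factor $w_h^2$ and the first-order part a factor $w_h$. Multiplying by $\beta_h=1/w_h$ therefore leaves a quantity bounded on the compact closure. This boundedness step is the only point needing care, namely the observation that every term of $\AAA_h^K(f\circ S)$ carries at least one factor $w_h$. Beyond that, the main risk is simply bookkeeping of the cross-block terms $h\neq k$, which come only from $\Amv_0$ and produce exactly $-z_{h,i}z_{k,j}$.
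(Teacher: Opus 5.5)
Your proposal is correct and follows the paper's route. The paper's proof is exactly this direct chain-rule computation, which it omits as ``long but straightforward''; your cancellations in the diagonal second-order block and in the drift check out, and you also verify $f\circ S\in\D(\A^K)$, including the boundedness condition via the factor $w_h$ in $\AAA_h^K(f\circ S)$, which the paper leaves implicit.
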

\begin{proof}
The proof consists in a long but straightforward computation, therefore it is omitted.
\end{proof}

We are now ready to state the main result of this 
section. By leveraging on the previous Lemma, 
the following Theorem shows that the solution of the martingale problem for 
$\B^K$ can be represented as a multiparameter random time 
change of $H$ independent $K$-type WF diffusions with generators \eqref{eq:hatAh}, 
each weighted by the corresponding component of 
an $H$-type WF diffusion with generator $(\ref{eq:hatA0})$, 
which also determines the random time rescaling. 
This amounts to a self-similarity property for WF diffusions, 
analogous to that of Proposition \ref{pro:Dirselfsimilarity}, and provides a \emph{blockwise skew-product representation} of the WF diffusion $Z^K$.

\begin{theorem}\label{th:Krepr}
Let $Z^K$ be the solution of the martingale 
problem for $\B^K$ as in $(\ref{eq:BK})$, with initial state $Z^
K(0)=(Z^K_1(0),\ldots ,Z^K_H(0))\in\Delta^{H,\circ}_{HK}$ almost surely. Let 
\begin{equation}
\begin{aligned}
\label{definitialcondition}W^K_h(0):=\sum_{i=1}^KZ_{h,i}^K(0),\quad\quad 
W^K(0)=(W^K_1(0),\ldots ,W^K_H(0)),\end{aligned}
\end{equation} 
and assume that 
\begin{equation}
\begin{aligned}
\label{requirementsoninitialcondition}W^K(0)\,,\,\frac{Z_1^K(0)}{W^K_
1(0)}\,,\ldots ,\,\frac{Z_H^K(0)}{W^K_H(0)}\quad\mbox{\rm are mutually independent}
.\end{aligned}
\end{equation} 
Then 
\begin{equation}\label{eq:Krepr}
\begin{aligned}
Z^K
\stackrel 
d{=}\big(W^K_1(\cdot )\,\mathcal{X}_1^K(\cdot ),\ldots ,W^K_H(\cdot 
)\,\mathcal{X}_H^K(\cdot )\big),\end{aligned}
\end{equation} 
where, for $h=1,\ldots ,H$, $\mathcal{X}_h^K(t)=\Xt_h^K(\int_0^t\beta_
h(W^K(s))ds)$ is as in \eqref{eq:Xh}, and $W^K,X_1^K,\ldots ,\linebreak\Xt_H^K$ are 
mutually independent with generators 
\eqref{eq:hatA0}-\eqref{eq:DA0} and \eqref{eq:hatAh} and initial conditions \break 
$W^K(0),\Xt_1^K(0):=Z^K_1(0)/W^K_1(0),\ldots ,\Xt_H^K(0):=Z^K_H(0)/W^K_
H(0)$. 

More generally, if $Z^K$ is the solution of the martingale 
problem for $\B^K$ as in $(\ref{eq:BK})$, with initial distribution $\nu\in {\cal P}\big(\Delta_{HK}\big)$ such that 
$\nu\big(\Delta^{H,\circ}_{HK}\big)=1$, then \eqref{eq:Krepr} holds with 
$\big(W^K,\mathcal{X}^K\big)$ the solution of the martingale problem for 
$\A^{K}$ as in \eqref{eq:AK}-\eqref{eq:DK}, with initial distribution $\nu\circ S$ defined as: $(\nu\circ S)(C):=\nu (S(C))$, $C$ a Borel set of $E_K$ (note that $S(C)$ is a Borel set of $\Delta_{HK}$ by Lemma \ref{th:K-Sinv}).
\end{theorem}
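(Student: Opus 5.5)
The plan is to transfer the martingale problem for $\A^K$ to one for $\B^K$ through the map $S$, and then use well-posedness of the martingale problem for $\B^K$. Let $(W^K,\mathcal{X}_1^K,\ldots,\mathcal{X}_H^K)$ be the process built in Theorem \ref{th:AKmpex}, with initial law $\mu_0\times\mu_1\times\cdots\times\mu_H$. Here $\mu_0$ is the law of $W^K(0)$ and $\mu_h$ is the law of $Z_h^K(0)/W_h^K(0)$. By \eqref{requirementsoninitialcondition}, this product law equals $\nu\circ S$, where $\nu$ is the law of $Z^K(0)$. Note that $W^K(0)\in\Delta_H^\circ$ a.s. because $Z^K(0)\in\Delta^{H,\circ}_{HK}$. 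Define
\[
Y(t):=S\big(W^K(t),\mathcal{X}^K(t)\big)=\big(W_1^K(t)\mathcal{X}_1^K(t),\ldots,W_H^K(t)\mathcal{X}_H^K(t)\big).
\]
Then $Y(0)=Z^K(0)$ in law. By Lemma \ref{th:K-Sinv}, $Y$ has continuous paths in $\Delta^{H,\circ}_{HK}\subset\Delta_{HK}$.

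Next I would check that $Y$ solves the martingale problem for $\B^K$. Take $f\in C^2(\Delta_{HK})$. By Lemma \ref{prop:generator mapping}, $f\circ S\in\D(\A^K)$ and $\A^K(f\circ S)=(\B^Kf)\circ S$. Theorem \ref{th:AKmpex} then says that
\[
f(Y(t))-\int_0^t \B^Kf(Y(s))\,ds=(f\circ S)(W^K,\mathcal{X}^K)(t)-\int_0^t\A^K(f\circ S)(W^K(s),\mathcal{X}^K(s))\,ds
\]
is a martingale with respect to the filtration of $(W^K,\mathcal{X}^K)$. It is therefore also a martingale with respect to the smaller filtration generated by $Y$; since $S$ is a bijection, these two filtrations in fact coincide. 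The martingale problem for $\B^K$ is well posed (Theorems 8.2.8 and 4.4.1 of \cite{EK86}), so $Y\stackrel d{=}Z^K$. This is \eqref{eq:Krepr}.

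For the general statement with an arbitrary $\nu$ satisfying $\nu(\Delta^{H,\circ}_{HK})=1$, I would argue the same way, taking $(W^K,\mathcal{X}^K)$ to be the solution given by Corollary \ref{th:Kmixt} with initial law $\nu\circ S$. This law is well defined because $S$ is a Borel bijection with continuous inverse. The only input needed is that this process solves the martingale problem for $\A^K$ on $\D(\A^K)$, and the corollary supplies exactly that. The same transfer argument then gives $S(W^K,\mathcal{X}^K)\stackrel d{=}Z^K$.

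Alternatively, one can derive the general case from the product case by conditioning on the initial state, using that $P^K_{(w,x)}$ and the law of $Z^K$ started at $wx$ are both measurable families. The first argument is cleaner.

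The main technical point is Lemma \ref{prop:generator mapping}. In particular, it requires that $\beta_h\A_h^K(f\circ S)$ stays bounded as $w_h\to0$. This holds because the $x_h$-derivatives of $f\circ S$ carry factors $w_h$: the second-order term carries $w_h^2$ and the first-order term carries $w_h$. Since that lemma is given, the remaining delicate step is checking that the initial law is correct, namely that the independence assumption makes $\nu\circ S$ a product measure, so that Theorem \ref{th:AKmpex} applies with genuinely independent $W^K,X_1^K,\ldots,X_H^K$.
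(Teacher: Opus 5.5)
Your proposal is correct and follows the paper's proof. As in the paper, you push the martingale problem for $\A^K$ forward through $S$ using Lemma~\ref{prop:generator mapping} and conclude by well-posedness of the martingale problem for $\B^K$. You also make explicit two points the paper leaves implicit: the independence assumption makes $\nu\circ S$ a product measure, as Theorem~\ref{th:AKmpex} requires, and Corollary~\ref{th:Kmixt} handles a general initial distribution.
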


\begin{proof}
By Lemma \ref{prop:generator mapping}, for  $f\in\D(\B^K)$, 
we have $f\circ S\in\D(\A^K)$ and $\A^K(f\circ S)=\B^Kf\circ S$. 
Therefore 
\[f\big(W^K_1(t)\,\mathcal{X}_1^K(t)\,,\ldots ,\,W^K_H(t)\,\mathcal{X}_
H^K(t)\big)-\int_0^t\B^Kf\big(W^K_1(s)\,\mathcal{X}_1^K(s)\,,\ldots 
,\,W^K_H(s)\,\mathcal{X}_H^K(s)\big)d {s}\]
is a martingale, hence the process 
\[\big(W^K_1\,\mathcal{X}_1^K,\ldots ,W^K_H\,\mathcal{
X}_H^K\big)\]
is a solution of the 
martingale problem for $\B^K$ with initial condition 
$(Z^K_1(0),\ldots,Z^K_H(0))$. By virtue of the uniqueness of the solution to the martingale 
problem for $\B^K$, we conclude that $\big(W^K_1\,\mathcal{X}_1^K,\ldots ,W^K_H\,\mathcal{
X}_H^K\big)$ equals $(Z^K_1,\ldots ,Z^K_H)$ in distribution. 
\end{proof}

In the next section we will show that such a representation carries over to the infinite-dimensional setting, 
when the number of types for each mark is let go to infinity.



\section{Multiple Poisson--Dirichlet diffusions}\label{sec:convergence}

In this section we derive the infinite-dimensional limit of the WF diffusions $Z^K$ of Section \ref{sec:hierarchicalWF} upon decreasingly ranking the components within each mark.

{We first prove convergence of the ranked processes by exploiting their blockwise skew-product representation (Section~\ref{subsec:ranked-conv}). The limit is characterized in skew-product coordinates as the unique solution of a martingale problem (Section~\ref{subsec:gen-on-E}).}
In Section \ref{subsec:gen-on-simplex}, we rewrite the operator of the martingale problem in generalized Kingman simplex coordinates, by pushing forward through the map $S$ in \eqref{eq:Smap}, and we show that it is the limit of the generators of the blockwise ranked WF diffusions. Next we give an explicit description of this operator and of its domain.

Finally,  in Section~\ref{subsec:domain-subtlety} we collect some technical remarks on our convergence proof and highlight why, in this setting, the state space cannot be taken compact as in \cite{EK81,P09, Cea17}.


\subsection{Convergence of the blockwise ranked WF diffusions}\label{subsec:ranked-conv}

In this section we consider the problem of obtaining the limit, as $K\rightarrow \infty$, for $Z^{K}$ (as in Theorem \ref{th:Krepr}) after ranking decreasingly
the frequencies within each mark.
For $\Delta^{\circ}_H$ as in \eqref{findimsimplexinterior}, set
\begin{equation}
\begin{aligned}
\label{eq:E}
E:=\Delta^{\circ}_H\times\simplex^H,\quad\quad E^{\circ}
:=\Delta^{\circ}_H\times\simplexone^H,
\end{aligned}
\end{equation}
where $\simplex^H$ and $\simplexone^H$ are the $H$-fold cartesian
products of $\simplex$ as in \eqref{simplex} and of
$\simplexone$ as in \eqref{simplexone}, respectively.

With a slight abuse of notation, we will still denote by 
$S$ the map from $E$ to $\simplexH$, 
where $\simplexH$ is the generalized Kingman simplex in \eqref{extendedKingmansimplex}, defined as 
\begin{equation}
\begin{aligned}
\label{mapS}S(w,x):=(w_1x_1,\ldots ,w_Hx_H),\qquad w_hx_h=(w_hx_{
h,1},w_hx_{h,2},\ldots ),\quad x_h\in\simplex.\end{aligned}
\end{equation} 
Let 
\begin{equation}
\begin{aligned}
\label{extendedKingmansimplexinterior}
\simplexHo:=\bigg\{z\in\simplexH
:\,\sum_{h=1}^H\sum_{i=1}^{\infty}z_{h,i}=1,\;0<\sum_{i=1}^{\infty}
z_{h,i},h=1,\ldots ,H\bigg\}.
\end{aligned}
\end{equation} 

\begin{lemma}\label{th:Sinv}
The map $S:E\rightarrow\simplexH$, is continuous. 
\begin{equation}\label{eq:S-1simplexHo}
S^{-1}\big(\simplexHo\big)=E^{\circ},
\end{equation}
and $S$ is bijective from $E^{\circ}$ (cf.~\eqref{eq:E}) to $\simplexHo$. 

The maps $z\rightarrow\sum_{i=1}^{\infty}z_{h,i}$, $h=1,\ldots ,H$, are continuous on 
$\simplexHo$ and the map $S^{-1}:\simplexHo\rightarrow E^{\circ}$ is continuous. 
\end{lemma}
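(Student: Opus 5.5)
The four claims (continuity of $S$, identification of $S^{-1}(\simplexHo)$, bijectivity, and continuity of the inverse together with the mark-mass maps) will be proved in that order. Throughout, $\simplex$ and $\simplexH$ carry the product topology, which on these compact sets is metrized by $\sum_i 2^{-i}|x_i-y_i|$. Convergence therefore means coordinatewise convergence.

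\emph{Continuity of $S$.} Each coordinate of $S(w,x)$ is $w_hx_{h,i}$, which is a continuous function of $(w,x)$. Hence $S$ is continuous into $[0,1]^{\infty\times H}$. The image lies in $\simplexH$: each $w_hx_h$ is nonincreasing in $i$, and $\sum_h\sum_i w_hx_{h,i}\le\sum_h w_h=1$. Continuity into $\simplexH$ follows.

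\emph{The preimage identity and bijectivity.} For $(w,x)\in E$ we have $\sum_i z_{h,i}=w_h|x_h|$ with $w_h>0$. The total mass $\sum_h w_h|x_h|$ equals $1=\sum_h w_h$ if and only if $|x_h|=1$ for every $h$, since each $|x_h|\le 1$. In that case $\sum_i z_{h,i}=w_h>0$ for every $h$, so $S(w,x)\in\simplexHo$ exactly when $(w,x)\in E^\circ$. This gives \eqref{eq:S-1simplexHo}.

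On $\simplexHo$ we define the inverse explicitly by $z\mapsto\big((|z_1|,\ldots,|z_H|),\,z_1/|z_1|,\ldots,z_H/|z_H|\big)$. Here $|z_h|>0$ and $\sum_h|z_h|=1$, so the first component lies in $\Delta_H^\circ$. Each $z_h/|z_h|$ is ranked with sum $1$, so it lies in $\simplexone$. One checks directly that the two compositions are the identity, which gives bijectivity.

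\emph{Continuity of the mark masses and of $S^{-1}$.} This is the main obstacle: $z\mapsto\sum_i z_{h,i}$ is only lower semicontinuous on $\simplexH$, because mass can escape to infinity. The plan is to use the constraint that the total mass is $1$ on $\simplexHo$.

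Let $z^n\to z$ in $\simplexHo$. By Fatou's lemma (coordinatewise convergence), $\liminf_n|z^n_h|\ge|z_h|$ for every $h$. At the same time, $\sum_h|z^n_h|=1=\sum_h|z_h|$. Thus $\limsup_n|z^n_h|=1-\liminf_n\sum_{k\ne h}|z^n_k|\le 1-\sum_{k\ne h}|z_k|=|z_h|$. Hence $|z^n_h|\to|z_h|$.

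Since $|z_h|>0$, it follows that $z^n_{h,i}/|z^n_h|\to z_{h,i}/|z_h|$ for each $i$, and also $|z^n_h|\to|z_h|$ in $\Delta_H^\circ$. The latter convergence also holds in the metric $d$ of \eqref{Delta0metric}, because $1/w_h$ is continuous on $(0,1]$.

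Because all the spaces involved are metric, sequential continuity suffices. This proves continuity of $S^{-1}$ and of the mark-mass maps.
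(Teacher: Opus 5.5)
Your proof is correct and follows essentially the same route as the paper. The first three assertions are routine, and continuity of the mark masses on $\simplexHo$ follows from their lower semicontinuity (Fatou) combined with the constraint $\sum_h|z_h|=1$. The only difference is that you apply the complementary bound $\limsup_n|z^n_h|\le 1-\sum_{k\ne h}|z_k|$ directly to each $h$, whereas the paper reaches the same conclusion by a downward induction starting from $h=H$; your direct version shows that induction is unnecessary.
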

\begin{proof}
The first three assertions are immediate. The map $z\rightarrow\sum_{i=1}^{\infty}z_{h,i}$ is not continuous on 
$\simplexH$, but the following argument shows that it is 
continuous on $\simplexHo$. Let $\{z^n\}$ be a sequence of 
points in $\simplexHo$ converging to a point $z\in\simplexHo$, 
and set 
\[b_h:=\sum_{i=1}^{\infty}z_{h,i},\quad \quad b_h^n:=\sum_{i=1}^{\infty}
z_{h,i}^n.\]
We have 
\[b_h\leq\liminf_{n\rightarrow\infty}b_h^n,\;\;h=1,\ldots ,H,\qquad 
1=\sum_{h=1}^Hb_h=\sum_{h=1}^Hb_h^n.\]
Then 
\[b_H=1-\sum_{h=1}^{H-1}b_h\geq 1-\liminf_{n\rightarrow\infty}\sum_{
h=1}^Hb_h^n\geq\limsup_{n\rightarrow\infty}b_H^n,\]
so that 
\[b_H=\lim_{n\rightarrow\infty}b_H^n.\]
Now let us show that, for every $h$, $2\leq h\leq H$, 
\[b_l=\lim_{n\rightarrow\infty}b_l^n\;\;\forall l:\,h\leq l\leq H
\quad\implies\quad b_l=\lim_{n\rightarrow\infty}b_l^n\;\;\forall 
l:\,h-1\leq l\leq H.\]
In fact the above yields
\[\sum_{l=1}^{h-1}b_l=1-\sum_{l=h}^Hb_l=\lim_{n\rightarrow\infty}
\bigg(1-\sum_{l=h}^Hb_l^n\bigg)=\lim_{n\rightarrow\infty}\sum_{l=1}^{h-1}b_l^
n,\]
and hence 
\[b_{h-1}=\sum_{l=1}^{h-1}b_l-\sum_{l=1}^{h-2}b_l\geq\lim_{n\rightarrow
\infty}\sum_{l=1}^{h-1}b_l^n-\liminf_{n\rightarrow\infty}\sum_{l=
1}^{h-2}b_l^n\geq\limsup_{n\rightarrow\infty}b_{h-1}^n.\]
The last assertion is now obvious. 
\end{proof}

Let $\Delta_K^c$ be the corner of the hypercube $[0,1]^K$ defined as
\[\Delta_K^{c}:=\left\{z\in [0,1]^K:\ \sum_{j=1}^Kz_j\leq 1\right
\},\]
and consider the continuous map $\rho^{K}:\Delta_K^{c}\rightarrow \simplex$, defined as
\begin{equation}
\begin{aligned}
\label{eq:rho}\rho^{K}(x):=(x_{(1)},x_{(2)},\ldots, x_{(K)},0,0,\ldots )\end{aligned}
\end{equation} 
where $x_{(1)}\geq x_{(2)}\geq ,\ldots\geq x_{(K)}$ are the descending order 
statistics of the coordinates of $x_{h}\in\Delta_K^{c}$. 
Defining, for $z\in \Delta_{HK}$, 
\begin{equation}\label{rho_K vector operator}
\rho^{K}_{(H)}(z):=(\rho^K(z_1),\rho^K(z_2),...\rho^K(z_H)),
\end{equation} 
and, for $(w,x)\in E_K$, 
\begin{equation}\label{rho_K vector operator E_K}
\rho^{K}_{(H)}(w,x):=
(w,\rho^{K}(x_{1}),\ldots,\rho^{K}(x_{H})),
\end{equation} 
we can write, recalling that $S$ denotes both the map in finite dimension \eqref{eq:Smap} and the analogous map in infinite dimension \eqref{mapS}, for $(w,x)\in E_K$, 
\begin{equation}
\begin{aligned}
\label{eq:Scomm}S\big(\rho^{K}_{(H)}(w,x)\big)=\rho^{K}_{(H)}\big(
S(w,x)\big),\end{aligned}
\end{equation} and, for $z\in \Delta^{H,\circ}_{HK}$, 
\begin{equation}\label{eq:S-1comm}
\rho^{K}_{(H)}(S^{-1}(z))=
S^{-1}(\rho^{K}_{(H)}(z)).
\end{equation}

Let now $Z^K:=(Z^K_1,\ldots ,Z^K_H)$ be the solution (unique in distribution) of the martingale 
problem for $\B^K$ as in \eqref{eq:BK}, with initial condition satisfying 
\eqref{definitialcondition}-\eqref{requirementsoninitialcondition}. By Theorem \ref{th:Krepr}, we have
\[Z^K\stackrel d{=}S\big(W^K,\mathcal{X}^K\big),\quad\quad\mathcal{
X}^K=(\mathcal{X}_1^K,\ldots ,\mathcal{X}_H^K)\]
with $\mathcal{X}_h^K$as in \eqref{eq:Xh}.
Then, by $(\ref{eq:Scomm})$, 
\begin{equation}
\begin{aligned}
\label{eq:S(W,rho(X))}
\rho^{K}_{(H)}(Z^K)
\overset{d}=S\big(\rho^{K}_{(H)}(W^K,\mathcal{X}^K)\big).\end{aligned}
\end{equation} 
\cite{EK81} showed that $\rho^{K}(\Xt _h^
K)$ converges in distribution, as $K\rightarrow\infty$, 
to the Poisson--Dirichlet diffusion with generator \eqref{IMNAoperator} with $\theta$ replaced by $\theta_{h}$ and domain the algebra generated by the functions 
\begin{equation}\varphi_1(x)\equiv 1,\quad\varphi_m(x):=\sum_{i=1}^{
\infty}x_i^m,\quad m\geq 2,\label{eq:varphi}\end{equation}
provided the initial distributions converge. The following theorem shows that this allows to identify the  limit in distribution for \eqref{eq:S(W,rho(X))}. 

\begin{theorem}\label{Z1,Z2aslimit}
Let $Z^K$ be the (unique in distribution) solution of the martingale 
problem for $\B^K$ as in \eqref{eq:BK} and let $W^K(0)$ be as in \eqref{definitialcondition}. Assume $W^K(0)$ and $Z^K(0)$ satisfy \eqref{requirementsoninitialcondition} and that, as $K\rightarrow\infty$, 
\begin{equation}
\begin{aligned}\label{eq:initconv}
\big(W^K(0),\rho^K_{(H)}\big(Z^K(0)\big)\big)\overset{d}\longrightarrow \big(W(0),Z(0)\big),
\end{aligned}
\end{equation} 
 with $W(0)\in\Delta^{\circ}_H$ almost surely, $
\Delta^{\circ}_H$ as in \eqref{findimsimplexinterior}. 
Let
$W:=(W_1,\ldots ,W_H)$ be the WF diffusion with generator 
\eqref{eq:hatA0} and initial condition $W(0)$, $\beta_h$ be as in \eqref{eq:beta} for $
h=1,\ldots ,H$, 
and let $\Xt_1,\ldots ,\Xt_H$ be independent Poisson--Dirichlet 
diffusions with generators \eqref{IMNAoperator}, 
independent of $W$, each with parameter $\theta_h$ and with initial conditions 
$Z_h(0)/W_h(0)$ (note that 
\eqref{definitialcondition}-\eqref{requirementsoninitialcondition} 
and \eqref{eq:initconv} imply that $W(0)$, 
$Z_1(0)/W_1(0),\ldots ,Z_H(0)/W_H(0)$ are independent). Then 
\begin{equation}\nonumber
\rho^{K}_{(H)}\big(Z^K\big)
\overset{d}\longrightarrow
Z
\end{equation} 
where 
\[
Z_h(\cdot ):=W_h(\cdot )\mathcal{X}_h(\cdot ),\quad h=1,\ldots ,H,
\]
\begin{equation}
\label{eq:Zstar}\mathcal{
X}_h(t):=X_h\bigg(\int_0^t\beta_h(W(s))d {s}\bigg),
\quad h=1,\ldots ,H,
\end{equation}
with $\beta_h$ as in \eqref{eq:beta}. 
 
$Z(t)\in\simplexHo$, with $\simplexHo$ as in 
\eqref{extendedKingmansimplexinterior}, for all $t>0$, 
almost surely. 

If $\rho^K_{(H)}\big(Z^K(0)\big)\overset{d}\longrightarrow Z(0)$ with $Z(0)\in\simplexHo$ almost surely, then \eqref{eq:initconv} holds with $W(0)=\big(\sum_{i\geq 1}Z_{1i}(0),\ldots ,\sum_{i\geq 
1}Z_{H,i}(0)\big)$ and $Z(t)\in\simplexHo$ for all $t\geq 0$, almost surely. 

\end{theorem}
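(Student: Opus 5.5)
The approach is to pass to the limit in the skew-product coordinates and then push forward through the continuous map $S$. By \eqref{eq:S(W,rho(X))} it suffices to prove
\[
\rho^K_{(H)}(W^K,\mathcal{X}^K)\overset{d}{\longrightarrow}(W,\mathcal{X})\quad\text{in } D_{E}[0,\infty).
\]
Since $S:E\to\simplexH$ is continuous (Lemma \ref{th:Sinv}) and induces a continuous map on path space in the Skorohod topology, the claim then follows by the continuous mapping theorem. Because all limit paths are continuous, I will work in $C_E[0,\infty)$ with the topology of uniform convergence on compacts.

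\emph{Step 1: initial conditions and independence.}
By \eqref{requirementsoninitialcondition}, the quantities $W^K(0)$ and $\rho^K(X^K_h(0))=\rho^K(Z^K_h(0))/W^K_h(0)$, $h=1,\ldots,H$, are independent. The map $(w,z)\mapsto (w,z_1/w_1,\ldots,z_H/w_H)$ is continuous on $\Delta_H^\circ\times\simplex^H$. Combining this with \eqref{eq:initconv} and $W(0)\in\Delta^\circ_H$, the continuous mapping theorem gives joint convergence of $(W^K(0),\rho^K(X^K_1(0)),\ldots)$ to a limit with independent components. Independence passes to the limit because joint characteristic functionals factorize.

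\emph{Step 2: convergence of the driving processes.}
$W^K$ has a generator not depending on $K$. Its martingale problem is well posed and Feller, so $W^K\Rightarrow W$ by continuity of the law in the initial point. By \cite{EK81}, $\rho^K(X^K_h)\Rightarrow X_h$ for each $h$. Since $W^K,X^K_1,\ldots,X^K_H$ are independent, the product law converges:
\[
(W^K,\rho^K(X^K_1),\ldots,\rho^K(X^K_H))\Rightarrow (W,X_1,\ldots,X_H),
\]
with the limit components independent. By Skorohod representation, I realize this convergence almost surely, locally uniformly, on one probability space.

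\emph{Step 3: the time change, which is the main obstacle.}
Ranking commutes with the time change, so $\rho^K(\mathcal{X}^K_h)(t)=\rho^K(X^K_h)(\tau^K_h(t))$ with $\tau^K_h(t)=\int_0^t W^K_h(s)^{-1}ds$. The difficulty is that $\beta_h$ is unbounded near $\partial\Delta_H$, so convergence of $\tau^K_h$ needs control.

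Under almost sure locally uniform convergence $W^K\to W$ with $W$ a.s. in $\Delta^\circ_H$ for all $t\ge0$ (Lemma \ref{lem:thetas}, using $W(0)\in\Delta^\circ_H$), the following holds on each $[0,T]$. We have $\min_{s\le T}W_h(s)>0$, so for large $K$, $\min_{s\le T}W^K_h(s)\ge \tfrac12\min_{s\le T} W_h(s)$. Hence $\tau^K_h\to\tau_h$ uniformly on $[0,T]$, and the $\tau_h$ are continuous and finite. The composition map $(x,\tau)\mapsto x\circ\tau$ is continuous at continuous $x$ and continuous nondecreasing $\tau$ under local uniform convergence. This gives almost sure convergence of $\rho^K_{(H)}(W^K,\mathcal{X}^K)$ to $(W,\mathcal{X})$, and hence the first claim.

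\emph{Step 4: support of $Z(t)$.}
Each $X_h(u)\in\simplexone$ for all $u>0$ almost surely, by the known property of the Poisson--Dirichlet diffusion (\cite{EK81}). Also $\tau_h(t)>0$ for $t>0$, and $W(t)\in\Delta^\circ_H$. Therefore $\sum_i Z_{h,i}(t)=W_h(t)>0$ and $\sum_h W_h(t)=1$, so $Z(t)\in\simplexHo$ for all $t>0$.

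\emph{Step 5: the last assertion.}
Suppose $Z(0)\in\simplexHo$. The maps $z\mapsto\sum_i z_{h,i}$ are continuous on $\simplexHo$ but not on $\simplexH$. I would use Lemma \ref{th:Sinv} via a semicontinuity argument on $\rho^K_{(H)}(Z^K(0))$:
\begin{itemize}
\item Since $\sum_{h,i}Z^K_{h,i}(0)=1$ and each mass functional is lower semicontinuous, the argument of Lemma \ref{th:Sinv} works along Skorohod-represented sequences.
\item This yields $\sum_i\rho^K(Z^K_h(0))_i\to\sum_i Z_{h,i}(0)$ almost surely.
\item Hence \eqref{eq:initconv} holds with $W(0)$ given by the mark masses, and $W(0)\in\Delta^\circ_H$.
\item We also get $X_h(0)=Z_h(0)/W_h(0)\in\simplexone$, so $X_h(u)\in\simplexone$ for all $u\ge0$, and therefore $Z(t)\in\simplexHo$ for all $t\ge 0$.
\end{itemize}
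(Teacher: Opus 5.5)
Your proposal is correct and follows essentially the same route as the paper: Skorohod representation for $(W^K,\rho^K(X^K_1),\ldots,\rho^K(X^K_H))$, then convergence of the random clocks because $W$ stays in $\Delta^\circ_H$, continuity of $S$ on path space, the Poisson--Dirichlet support property for $Z(t)\in\simplexHo$, and Lemma~\ref{th:Sinv} for the last assertion. The only difference is that you spell out details the paper leaves implicit, namely independence in the limit, the uniform lower bound on $W^K_h$ over $[0,T]$, and continuity of the composition map.
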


\begin{proof}
Let $W^K,\Xt_1^K,\ldots ,\Xt_H^K$ be as in Theorem 
\ref{th:Krepr}.  By virtue of the Skorohod Representation Theorem, we can suppose 
that, almost surely, $\big(W^K,\rho^{K}(\Xt_1^K),\ldots ,\rho^{K}(\Xt_
H^K)\big)$ converges 
to $(W,\Xt_1,\ldots ,\Xt_H)$ uniformly over compact time 
intervals. Since, almost surely, $W(t)\in\Delta^{\circ}_H$ for all 
$t\geq 0$, it follows that $\rho^{K}\big(\Xt_h^K(\int_0^t\beta_h(W^K(s))d {s}
)\big)$ converges almost surely to 
$\Xt_h(\int_0^t\beta_h(W(s))d {s})$ uniformly over compact time intervals, for $
h=1,\ldots ,H$. The first assertion then follows from the fact that the map 
\begin{equation}
\begin{aligned}
\big(w(\cdot),x(\cdot)\big)\in C_{E}[0,\infty)\rightarrow S(w(\cdot),x(\cdot))\in C_{\simplexH}[0,\infty)
\end{aligned}
\end{equation}
is continuous with respect to uniform convergence on compact time intervals. 
The second assertion is an immediate consequence of the properties of Poisson-Dirichlet diffusions and the other assertions  follow by Lemma \ref{th:Sinv}. 
\end{proof}


\subsection{The generator of the limit in skew-product coordinates}\label{subsec:gen-on-E}

Let now $\Abb_{h}:=\Abb^{\theta_{h}}$, where $\Abb^{\theta_{h}}$ is as in 
\eqref{IMNAoperator} with $\theta$ replaced by $\theta_{h}$. Note that the algebra generated by the functions \eqref{eq:varphi} is also the linear space generated by the functions 
\begin{equation}\label{Phi grande}
\begin{aligned}
\Phi_{\mm^{h}}(z_{h}):=&\,\prod_{q\ge1}\varphi_{m^{h}_{q}}(z_{h}),\quad \mm^{h}=(m^{h}_{1},m^{h}_{2},m^{h}_{3},\ldots) \in {\cal M},\\
{\cal M}:=&\,\,\big\{{\mm}\in \N^\infty:\; m_1\geq m_2\geq \cdots,\, m_l=1 \text{  for } l\geq r+1 \text{ for some } r\in \N\big\},
\end{aligned}
\end{equation} 
(recall that $\varphi_1\equiv 1$). Thus, for every initial distribution on $\overline \nabla$, the Markov process with generator $\Abb_h$ with domain the algebra generated by the functions \eqref{eq:varphi} is also the unique solution of the martingale problem for $\Abb_h$ with domain 
\begin{equation}\label{D(Abb_h)}
\D(\Abb_h):=\bigg\{\Phi_{\mm^{h}},\,\mm^{h} \in {\cal M}\bigg\},
\end{equation}
where $\Phi_{\mm}$ and ${\cal M}$ are as in \eqref{Phi grande}. Therefore in the sequel we will suppose the domain of 
$\Abb_{h}$ to be given by \eqref{D(Abb_h)}.

For a generic metric space $G$, we denote by ${\cal C}_b(G)$ the space of bounded continuous functions on $G$. Define the operator $\Abb :\D(\Abb )\subseteq {\cal C}_b(E)\rightarrow 
{\cal C}_b(E)$ as 
\begin{equation}
\begin{aligned}
\label{eq:Astar}\Abb f(w,x):=\Amv_0f(w,x)+\sum_{h=1}^H\beta_h(w)
\Abb_{h} f(w,x),\end{aligned}
\end{equation} 
where, with a slight abuse of notation, we still denote by  $\Amv_0$ the operator that acts on $f$ as a function of $w$ like $A_{0}$ in \eqref{eq:hatA0} and, for each $
h=1,\ldots ,H$, by 
$\Abb_{h}$ the operator that acts on $f$ as a function of $x_h$ like $\Abb_{h}$. Let also 
\begin{equation}
\begin{aligned}
\label{eq:Astardomain}
\D(\Abb ):=\bigg\{f=\prod_{h=0}^Hf_h,&\,\,f_0\in\D(\Asv_
0),\,\lim_{w\rightarrow w^0}f_0(w)=0,\forall w^0\in\Delta_H\setminus\Delta^{
\circ}_H,\\
\ &\,f_h\in\D(\Abb_{h}),\,h=1,\ldots ,H\bigg\},
\end{aligned}
\end{equation} 
with $\D(\Asv_0)$ as in \eqref{eq:DA0}.  

\begin{theorem}\label{th:Astarmp}
Let $W$ be a WF diffusion with generator 
\eqref{eq:hatA0} and initial condition almost surely in $\Delta^{\circ}_H$, $\Xt_1,\ldots ,\Xt_H$ be independent Poisson--Dirichlet 
diffusions with generators \eqref{IMNAoperator}, each with parameter $\theta_h$, $h=1,\ldots,H$, 
independent of $W$, and let $(\mathcal{X}_1,\ldots ,\mathcal{X}_H)$ be given by \eqref{eq:Zstar}. 
Then $(W,\mathcal{X}_1,\ldots ,\mathcal{X}_H)$ is the unique 
(in distribution) solution of the martingale 
problem for $\Abb$ as in \eqref{eq:Astardomain}-\eqref{eq:Astar} 
with initial distribution given by the law of $\big(W(0),\Xt _1(0),\ldots,\Xt _H(0)\big)$.
\end{theorem}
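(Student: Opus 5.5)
The plan is to repeat, almost verbatim, the two-step argument of Section~\ref{chartchang}: existence as in Theorem~\ref{th:AKmpex} and uniqueness as in Theorem~\ref{th:AKunico}. The finite-dimensional WF diffusions $\Xt_h^K$ are replaced by the Poisson--Dirichlet diffusions $\Xt_h$, and $\Delta_K$ by the compact space $\simplex$. The time-change framework of Chapter~6.2 of \cite{EK86} needs only that each coordinate process be the unique solution of a well-posed martingale problem on a complete separable metric space. This holds for $W$ by Theorem~4.4.1 of \cite{EK86} with domain \eqref{eq:DA0}, and for each $\Xt_h$ by \cite{EK81} with domain $\D(\Abb_h)$ in \eqref{D(Abb_h)}. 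The space $\Delta_H^\circ$ is metrized by \eqref{Delta0metric}.

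\emph{Existence.} I pad the family with independent Poisson--Dirichlet diffusions $\Xt_h$ for $h>H$ (parameter $1$, rate $\beta_h\equiv0$). I replace $\beta_h$ by the bounded truncations $\beta_h^\delta$ of \eqref{beta delta functions} and define $\mathcal X^{\delta}_h(t)=\Xt_h(\int_0^t\beta_h^\delta(W(s))ds)$. This is the pathwise unique solution of system (6.2.1) in \cite{EK86}, its time changes are stopping times, and it is nonanticipating. Theorem~6.2.8(a) then gives that
\[
\prod_{j=0}^H f_j(\mathcal X_j^\delta(t))-\int_0^t\sum_{h=0}^H\beta_h^\delta(W(s))\prod_{j\ne h}f_j(\mathcal X^\delta_j(s))\,\Abb_hf_h(\mathcal X_h^\delta(s))\,ds
\]
is a martingale for every product $f=\prod f_h$ with $f_0\in\D(\Asv_0)$ and $f_h\in\D(\Abb_h)$. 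The functions $\Phi_{\mm}$ and $\Abb_h\Phi_{\mm}$ are continuous on $\simplex$, because $\varphi_m$ with $m\ge2$ is continuous and $\Abb_h\Phi_{\mm}$ is a polynomial in the $\varphi_m$'s. For $f\in\D(\Abb)$ the condition $f_0\to0$ at $\partial\Delta_H$ makes $\beta_h f_0$ bounded, since $f_0$ is a polynomial vanishing on each face $\{w_h=0\}$ and is therefore divisible by $w_h$. I then let $\delta\downarrow0$. Lemma~\ref{lem:thetas} ensures that $W$ stays in $\Delta_H^\circ$, so $\beta_h^\delta(W)\to\beta_h(W)$ uniformly on compacts almost surely, $\mathcal X^\delta\to\mathcal X$ uniformly on compacts, and the integrands converge boundedly and pointwise. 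The limit is therefore a martingale.

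\emph{Uniqueness.} I take any solution with the given product initial law and pass to a càdlàg version via Theorem~4.3.6 of \cite{EK86}. I add frozen coordinates for $h>H$ and perform the time change $\eta$ with $\alpha(w)=(\prod_h w_h)^{-1}$, exactly as in the proof of Theorem~\ref{th:AKunico}. The key point is that $\int_0^T\alpha(W(s))ds<\infty$ almost surely, which holds because $W$ is continuous in $\Delta_H^\circ$. Proposition~6.2.10 then converts the problem into a martingale problem for $\A_{\infty,b}$, whose rates $\alpha^{-1}$ and $\alpha^{-1}\beta_h$ are bounded, posed on the domain without the boundary-vanishing condition on $f_0$.

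The main obstacle is the approximation step: showing that every $f_0\prod f_h$ with $f_0$ an arbitrary polynomial is a bounded-pointwise limit of functions $f^n$ in $\mathrm{span}(\D(\Abb))$ with $\alpha^{-1}\Abb f^n\to\A_{\infty,b}f$ boundedly. I would try the same cutoff $\tilde f_0^n=\tilde f_0\,\rho(nw_1\cdots w_H-1)/(w_1\cdots w_H)$ as before, approximated in $C^2(\Delta_H)$ by polynomials. Since $\Abb$ acts on the $x$-factors only through $\Abb_h\Phi_{\mm}$, which is bounded and independent of $w$, the $C^2$ control in $w$ suffices. After this, Theorem~6.2.8(b) yields a nonanticipating version that solves the time-change equations driven by independent copies of $W$ and the $\Xt_h$. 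Undoing the $\alpha$ time change gives \eqref{eq:Zstar}, and pathwise uniqueness of this equation fixes the law, which completes the proof.
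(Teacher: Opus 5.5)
Your proposal is correct and follows the paper's route. The paper's proof is the single remark that the arguments of Theorems \ref{th:AKmpex} and \ref{th:AKunico} carry over, and you have spelled out that transfer: truncated rates $\beta_h^\delta$ for existence, and for uniqueness the $\alpha$ time change, the cutoff approximation in $w$ alone, and Theorem 6.2.8(b) of \cite{EK86}.

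One small fix is needed in the uniqueness step. For an arbitrary solution, finiteness of $\int_0^T\alpha$ should be justified by the càdlàg property of its $w$-component in $(\Delta_H^\circ,d)$. The range of such a path on $[0,T]$ then has compact closure in $\Delta_H^\circ$, and $\alpha$ is continuous there, hence bounded. Continuity of the WF diffusion $W$ cannot be used, because $W$ is not yet known to be that solution's $w$-marginal.
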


\begin{proof}
The assertion follows by the same arguments used in 
the proofs of Theorems \ref{th:AKmpex} and 
\ref{th:AKunico}. 
\end{proof}

\begin{corollary}\label{th:mixt}
For every $\mu\in {\cal P}(E)$, the martingale problem for $\Abb$,  $\Abb$ as in \eqref{eq:Astar}-\eqref{eq:Astardomain}, with initial distribution $\mu$, has one and only one solution. The law of the solution can be represented as \begin{equation}\label{eq:mixt}
\int_{E}P_{(w,x)}\, d\mu ,
\end{equation}
where $P_{(w,x)}$ is the law of the solution of the martingale problem for $\Abb$ with initial distribution $\delta_{(w,x)}$.
\end{corollary}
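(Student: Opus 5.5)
The plan is to follow the proof of Corollary~\ref{th:Kmixt} step by step, with $E_K$ replaced by $E=\Delta_H^\circ\times\simplex^H$ and with Theorems~\ref{th:AKmpex}--\ref{th:AKunico} replaced by Theorem~\ref{th:Astarmp}. First, every solution of the martingale problem for $\Abb$ has a version with paths in $D_E[0,\infty)$. This follows from Theorem 4.3.6 of \cite{EK86}, because $E$ is complete and separable: use the metric $d$ of \eqref{Delta0metric} on $\Delta_H^\circ$ and the product topology on the compact space $\simplex^H$. We also need $\D(\Abb)$ to be separating. Products of polynomials in $w$ that vanish on $\Delta_H\setminus\Delta_H^\circ$ (for instance multiples of $w_1\cdots w_H$) with the functions $\Phi_{\mm}$ separate points of $E$ and are closed under multiplication. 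Their span is therefore an algebra separating points, and it is a separating class.

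For existence, let $P_{(w,x)}$ be the law of $(W,\mathcal X_1,\ldots,\mathcal X_H)$ started at $(w,x)$, as given by Theorem~\ref{th:Astarmp}. We show that $(w,x)\mapsto P_{(w,x)}$ is weakly continuous, hence measurable. Take $(w^n,x^n)\to(w,x)$ in $E$. The WF diffusions $W$ with generator \eqref{eq:hatA0} and the Poisson--Dirichlet diffusions $X_h$ are Feller, so their laws converge. By Skorohod representation the paths converge locally uniformly, and $W$ never leaves $\Delta_H^\circ$ (Lemma~\ref{lem:thetas}). Hence the clocks $\int_0^t\beta_h(W^n(s))ds$ converge locally uniformly, and so do the time-changed processes, exactly as in the proof of Theorem~\ref{Z1,Z2aslimit}. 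Next, the mixture $\int_E P_{(w,x)}\,d\mu$ is a solution. The martingale property of $f(Y(t))-\int_0^t\Abb f(Y(s))ds$ is tested against bounded functions of the path up to time $s$, and it passes to mixtures by conditioning on the initial value. Here each $\Abb f$ is bounded for $f\in\D(\Abb)$, because $f_0$ vanishes at the boundary and the singular factors $\beta_h$ are compensated. More precisely, $f_0$ is a polynomial vanishing on $\partial\Delta_H$, so $f_0$ is divisible by $w_h$ on the face $\{w_h=0\}$.

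For uniqueness and the representation \eqref{eq:mixt}, apply Theorem 4.5.19 of \cite{EK86}. This reduces uniqueness for general $\mu$ to uniqueness for point masses, which Theorem~\ref{th:Astarmp} supplies, provided the compact containment and tightness conditions are checked through Theorem 4.5.11(b) and Lemma 4.5.13. For these we use the same compact sets $K_n:=K_n^\circ\times\simplex^H$, with $K^\circ_R$ the $d$-balls around $w^*$; these are compact in $\Delta_H^\circ$ by completeness and local compactness. We also use the same Lyapunov-type cutoff functions $f_n(w)$ depending only on $w$. Since the $f_n$ depend on $w$ only, $\Abb f_n=A_0f_n$, and the estimates are identical to those in Corollary~\ref{th:Kmixt}.

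The main obstacle is this last verification. One must show that $A_0$ applied to $1+\log(1+(n+\eta)^2)-\log(1+d(w,w^*)^2)$ stays suitably bounded (bounded above) as $w$ approaches the boundary. This uses $\theta_h\ge1$, which makes the drift push away from the boundary, and it was already needed in the finite-$K$ case, so the present case reduces to it verbatim. One must also check that $f_n$ can be placed in the closure of $\D(\Abb)$ (bp-closure), which follows from the approximation argument in Theorem~\ref{th:AKunico}.
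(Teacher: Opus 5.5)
Your proposal is correct and follows the paper's route: the paper's proof just says it is completely analogous to Corollary~\ref{th:Kmixt}, namely Theorem 4.3.6 of Ethier--Kurtz for c\`adl\`ag versions, weak continuity of $(w,x)\mapsto P_{(w,x)}$ plus mixing for existence, and Theorem 4.5.19 via Theorem 4.5.11(b) and Lemma 4.5.13 with the same sets $K_n=K_n^\circ\times\simplex^H$ and $w$-only cutoffs $f_n$; your observation that $\Abb f_n=A_0f_n$ is exactly why the finite-$K$ verification carries over unchanged. One small correction to your last paragraph: $f_n$ decreases toward $\partial\Delta_H$, so compact containment needs a uniform lower bound on $A_0f_n$ (equivalently an upper bound on $A_0\log(1+d(w,w^*)^2)$), not an upper bound; near a face $\{w_h=0\}$ one finds $A_0f_n\approx(\theta_h-1)/w_h$, which is bounded below precisely because $\theta_h\ge1$ but unbounded above when $\theta_h>1$.
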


\begin{proof}
The proof is completely analogous to that of Corollary \ref{th:Kmixt}. 
\end{proof}

The following Corollary extends the result of Theorem \ref{Z1,Z2aslimit} to the case when the initial condition $Z^K(0)$ does not satisfy \eqref{requirementsoninitialcondition}. 

\begin{corollary}\label{th:geninit}
Let $Z^K$ be the unique (in distribution) solution of the martingale 
problem for $\B^K$ as in \eqref{eq:BK} with initial distribution $\nu^K\in {\cal P}\big(\Delta_{HK}\big)$ such that 
$\nu^K\big(\Delta^{H,\circ}_{HK}\big)=1$. Suppose that,  as $K\rightarrow\infty$, $\big\{\nu^K\circ \big(\rho^K_{(H)}\big)^{-1}\big\}$ converges weakly to a probability distribution $\nu\in {\cal P}(\simplexH)$ such that $\nu(\simplexHo)=1$. Then 
\begin{equation}\nonumber
\rho^{K}_{(H)}\big(Z^K\big)
\overset{d}\longrightarrow
Z:=\big(W_1(\cdot )\,\mathcal{X}_1(\cdot ),\ldots ,W_H(\cdot 
)\,\mathcal{X}_H(\cdot )\big)
\end{equation} 
where $\big(W,\mathcal{X}\big)$ is the unique (in distribution) solution of the martingale problem for $\Abb$ with the initial distribution $\nu\circ S$ defined as: $(\nu\circ S)(C):=\nu (S(C\cap E^{\circ}))=\nu (S(C)\cap \simplexHo)$, $C$ a Borel set of $E$ (note that $S(C\cap E^{\circ})$ is a Borel set of $\simplexH$ by Lemma \ref{th:Sinv}). 
$Z(t)\in\simplexHo$ for all $t\geq 0$, almost surely. 

\end{corollary}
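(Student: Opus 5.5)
The plan is to reduce to the case of Theorem \ref{Z1,Z2aslimit}, with deterministic initial points, by conditioning on the initial state, and then integrate using the mixture representations of Corollaries \ref{th:Kmixt} and \ref{th:mixt}.

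\emph{Step 1: representation of the prelimit laws.} Theorem \ref{th:Krepr} gives $Z^K\stackrel d= S(W^K,\mathcal{X}^K)$, where $(W^K,\mathcal{X}^K)$ solves the martingale problem for $\A^K$ with initial law $\nu^K\circ S$. By Corollary \ref{th:Kmixt} this law equals $\int_{E_K}P^K_{(w,x)}\,d(\nu^K\circ S)$. Combined with \eqref{eq:Scomm}, the law of $\rho^K_{(H)}(Z^K)$ is then the mixture over $z\sim\nu^K$ of the laws $Q^K_z$ of $\rho^K_{(H)}(Z^{K,z})$, where $Z^{K,z}$ is the WF diffusion started at $z$. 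A deterministic initial condition trivially satisfies \eqref{requirementsoninitialcondition}.

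\emph{Step 2: continuity of the pointwise limit.} Theorem \ref{Z1,Z2aslimit} applies to deterministic initial points. If $z^K\in\Delta^{H,\circ}_{HK}$ and $\rho^K_{(H)}(z^K)\to z\in\simplexHo$, then its last assertion (whose proof uses Lemma \ref{th:Sinv}) shows that $W^K(0)\to (|z_1|,\dots,|z_H|)$. Hence $Q^K_{z^K}\Rightarrow Q_z$, where $Q_z$ is the law of $S(W,\mathcal{X})$ under $P_{S^{-1}(z)}$. By Theorem \ref{th:Astarmp} the limit is identified through the unique solution of the martingale problem for $\Abb$.

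\emph{Step 3: passing to mixtures.} By Skorohod representation, choose $\zeta^K\sim\nu^K$ with $\rho^K_{(H)}(\zeta^K)\to\zeta$ almost surely, where $\zeta\sim\nu$. Almost surely $\zeta\in\simplexHo$, so Step 2 gives $Q^K_{\zeta^K}\Rightarrow Q_\zeta$ almost surely. For bounded continuous $G$ on path space, dominated convergence yields
\[E[G(\rho^K_{(H)}(Z^K))]=E\big[Q^K_{\zeta^K}G\big]\to E[Q_\zeta G].\]
The right side equals $\int Q_z G\,d\nu(z)$. By Corollary \ref{th:mixt} and Lemma \ref{th:Sinv}, this is the law of $S(W,\mathcal{X})$ when $(W,\mathcal{X})$ solves the martingale problem for $\Abb$ with initial law $\nu\circ S$. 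The statement $Z(t)\in\simplexHo$ for all $t$ holds $Q_z$-almost surely for each $z\in\simplexHo$ by Theorem \ref{Z1,Z2aslimit}, and hence under the mixture.

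\emph{Main obstacle.} The delicate point is measurability: $z\mapsto Q^K_z$ and $z\mapsto Q_z$ must be measurable kernels, so that the mixtures make sense and the expectations above are well defined. This follows from the weak continuity of $P_{(w,x)}$ noted in the proof of Corollary \ref{th:Kmixt}, together with continuity of $S^{-1}$ on $\simplexHo$. A further check is that the Skorohod coupling in Step 3 only needs convergence of the $\rho^K_{(H)}$-images of the initial points, not of the unranked initial points themselves. This is fine because $Q^K_z$ depends on $z$ only through the law of a process whose ranked image is determined by the skew-product representation.
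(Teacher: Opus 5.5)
Your proof is correct, but it passes to the limit by a different route from the paper.

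The paper mixes over \emph{ranked} initial points in skew-product coordinates. It first shows, via Proposition 2.4 of \cite{EK81} and the arguments of Theorems \ref{th:AKmpex}, \ref{th:AKunico} and Corollary \ref{th:Kmixt}, that $\rho^K_{(H)}(W^K,\mathcal{X}^K)$ is itself the unique solution of a martingale problem for an operator $\A^{K,\rho}$, with kernel $P^{K,\rho}_{(w,x)}$ on $\rho^K_{(H)}(E_K)$. It then extends this kernel to $E$ by $P_{(w,x)}$ and uses weak continuity of $P_{(w,x)}$ to upgrade the convergence from Theorem \ref{Z1,Z2aslimit} to uniform convergence on compacts. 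It concludes by tightness of the initial laws.

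You instead mix over \emph{unranked} initial points $z$. There the mixture representation is immediate from well-posedness for $\B^K$, so you never need the ranked prelimit process to be Markov. You also replace ``uniform convergence on compacts plus tightness'' by a Skorohod coupling of the initial points plus dominated convergence, applying the last assertion of Theorem \ref{Z1,Z2aslimit} pathwise. This also makes explicit a step the paper leaves tacit: the passage, via Lemma \ref{th:Sinv}, from weak convergence on $\simplexH$ to convergence of the mark masses.

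Your route is more elementary. The paper's gives the additional structural fact that the ranked prelimit process is Markov, with kernels converging uniformly on compacts.

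One step needs a better justification than the one you give. Skorohod's theorem produces ranked points $\eta^K\sim\nu^K\circ(\rho^K_{(H)})^{-1}$ converging almost surely. To get $\zeta^K\sim\nu^K$ with $\rho^K_{(H)}(\zeta^K)=\eta^K$, you can do either of the following:
\begin{itemize}
\item use a regular conditional distribution of $z$ given $\rho^K_{(H)}(z)$ under $\nu^K$, which is a standard transfer argument on the Polish space $\Delta_{HK}$;
\item observe that $\B^K$ commutes with permutations of coordinates within each mark. Then $Q^K_z$ depends on $z$ only through $\rho^K_{(H)}(z)$, and you can mix directly over the ranked points.
\end{itemize}
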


\begin{proof}
Let $\big(W^K,\mathcal{X}^K\big)$ be the solution of the martingale problem for 
$\A^{K}$ as in \eqref{eq:AK}-\eqref{eq:DK} with initial distribution $\mu^K\in \PPP (E_K)$. Using Proposition 2.4 of \cite{EK81} and the same arguments as in Theorem \ref{th:AKmpex}, Theorem \ref{th:AKunico} and Corollary \ref{th:Kmixt} one can see that 
$\rho^K_{(H)}\big(W^K,\mathcal{X}^K\big)$ is the unique (in distribution) solution of the martingale problem for $\A^{K,\rho}$, 
\begin{equation}
\begin{aligned}
\A^{K,\rho} f(w,x):=&\,\Amv_0f(w,x)+\sum_{h=1}^H\beta_h(w)\Amv_
h^Kf(w,x),\\
\D(\A^{K,\rho}):=&\,\bigg\{f\in {\cal C}_b\big(\rho^K_{(H)}(E_K)\big):\,f\circ \rho^K_{(H)}=\tilde {f}\big\vert_{E_K},\,\tilde{
f}\in C^2\big(\overline {E_K}\big),\\
&\qquad\sup_{(w,x)\in E_K}\beta_h(w_
h)|\AAA_{h}^K\tilde{f}(w,x)|<\infty ,\,h=1,\ldots ,H\bigg\},
\end{aligned}
\end{equation}
with initial distribution $\mu^K\circ \big(\rho^K_{(H)}\big)^{-1}$ and that the law of $\rho^K_{(H)}\big(W^K,\mathcal{X}^K\big)$ has the representation 
\begin{equation}\label{eq:Krhomixt}
\int_{\rho^K_{(H)}(E_K)}P^{K,\rho}_{(w,x)}\, d\big(\mu^K\circ \big(\rho^K_{(H)}\big)^{-1}\big) ,
\end{equation}
where $P^{K,\rho}_{(w,x)}$ is the law of the solution of the martingale problem for $\A^{K,\rho}$ with initial distribution 
$\delta_{(w,x)}$. 

Suppose now that,  as $K\rightarrow\infty$, $\mu^K\circ \big(\rho^K_{(H)}\big)^{-1}\overset{w}\longrightarrow \mu$,  where $\overset{w}\longrightarrow$ denotes weak convergence, and let $(W,{\cal X})$ be the solution of the martingale problem for $\Abb$,  $\Abb$ as in \eqref{eq:Astar}-\eqref{eq:Astardomain}, with initial distribution $\mu$. We know, by Theorem \ref{Z1,Z2aslimit} that, for a sequence $\big\{(w^K,x^K)\big\}$ such that, for each $K$, $(w^K,x^K)\in \rho^K_{(H)}(E_K)$ and $(w^K,x^K)\rightarrow (w,x)\in E$, $P^{K,\rho}_{(w^K,x^K)}\overset{w}\longrightarrow P_{(w,x)}$, where $P_{(w,x)}$ is the law of the solution of the martingale problem for $\Abb$ with initial distribution $\delta_{(w,x)}$. Therefore, taking into account that $P_{(w,x)}$ is continuous in $(w,x)$ with respect to weak convergence, if we extend $P^{K,\rho}_{(w,x)}$ to $E$ by setting, for each $K$, 
\begin{equation}
P^{K,\rho}_{(w,x)}:=P_{(w,x)},\quad (w,x)\in E\setminus \rho^K_{(H)}(E_K),
\end{equation}
we have that $P^{K,\rho}_{(w,x)}\overset{w}\longrightarrow P_{(w,x)}$ uniformly over compact subsets of $E$. Since  $\big\{\mu^K\circ \big(\rho^K_{(H)}\big)^{-1}\big\}$ is tight because $E$ is a Polish space, 
\begin{equation}\label{eq:Krhomixt}
\int_{\rho^K_{(H)}(E_K)}P^{K,\rho}_{(w,x)}\, d\big(\mu^K\circ \big(\rho^K_{(H)}\big)^{-1}\big)=
\int_{E}P^{K,\rho}_{(w,x)}\, d\big(\mu^K\circ \big(\rho^K_{(H)}\big)^{-1}\big)\overset{w}\longrightarrow \int_{E}P_{(w,x)}\, d\mu ,
\end{equation}
that is the law of $\rho^K_{(H)}\big(W^K,\mathcal{X}^K\big)$ converges weakly to the law of $\big(W,\mathcal{X}\big)$. 

The assertion then follows from Theorem \ref{th:Krepr} as in Theorem \ref{Z1,Z2aslimit}.

\end{proof}


\subsection{The generator of the limit in generalized Kingman simplex coordinates}\label{subsec:gen-on-simplex}

The following result identifies the generator of the process $Z$ in Theorem \ref{Z1,Z2aslimit} or, more generally, in Corollary \ref{th:geninit} as a composition of the operator \eqref{eq:Astar}-\eqref{eq:Astardomain} with the map $S^{-1}$ and shows that it is the limit of the operators $B^K$ in \eqref{eq:BK}, after the appropriate reordering of the test function arguments. 
\begin{theorem}\label{thm:Z and B implicit}
Let $Z$ be the limiting process 
in Theorem \ref{Z1,Z2aslimit} or, more generally, Corollary \ref{th:geninit}, with initial condition 
$Z(0)\in\simplexHo$ almost surely, $\simplexHo$ as in \eqref{extendedKingmansimplexinterior}. 
Then $Z$ is a solution of the martingale problem for the 
operator $\BB:{\cal D}(\BB)\subseteq {\cal C}_b(\simplexHo)\rightarrow 
{\cal C}_b(\simplexHo)$ defined 
as 
\begin{equation}
\begin{aligned}
\label{BwrittenasAstar}
\BB f(z):=&\,\Abb g(S^{-1}(z)),\quad z\in\simplexHo,\\
\D(\BB):=&\,\left\{f\in {\cal C}_b(\simplexHo):\;f\circ S=g\big\vert_{E^{\circ}},\,g\in\D(\Abb )\right\},
\end{aligned}
\end{equation}
where $\Abb$ is as in \eqref{eq:Astar}-\eqref{eq:Astardomain} and the definition of  $\BB$ is well posed because the density of $E^\circ$ in $E$ implies that, for every $f\in {\cal C}_b(\simplexHo)$, there is at most one $g\in \D(\Abb )$ such that $f\circ S=g\big\vert_{E^{\circ}}$. 

$Z$ is a Markov process. 

Moreover, for every $f\in \D(\BB)$, we have $f\circ \rho^K_{(H)}\in \D(\B^{K})$, with $B^K$ as in \eqref{eq:BK}, and 
\begin{equation}\label{B^K to B}
\sup_{z\in \Delta^{H,\circ}_{HK}}
\Big|\B^{K}(f\circ \rho^{K}_{(H)})(z)-\BB f(\rho^{K}_{(H)}(z))\Big|\rightarrow 0,
\end{equation} 
as $K\rightarrow \infty$. 
\end{theorem}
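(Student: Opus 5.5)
The plan is to prove the three assertions in order: the martingale property, the Markov property, and the generator convergence \eqref{B^K to B}. Each is transported through the map $S$ of \eqref{mapS}, using Lemma \ref{th:Sinv}, Theorem \ref{th:Astarmp} and Corollaries \ref{th:mixt}, \ref{th:geninit}.

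\emph{Well-posedness of $\BB$ and the martingale property.} Uniqueness of $g$ is as stated. If $g_1|_{E^\circ}=g_2|_{E^\circ}$, then $g_1=g_2$ on $E$, because both are continuous and $E^\circ$ is dense in $E$ (finite sequences can be approximated in $\simplex$ by elements of $\simplexone$).

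Next I check that $\Abb g$ is bounded for $g=f_0\prod_h\Phi_{\mm^h}\in\D(\Abb)$. The factor $f_0$ is a polynomial vanishing on $\Delta_H\setminus\Delta_H^\circ$. Restricted to the face $\{w_h=0\}$ it vanishes identically, so on the affine hull of $\Delta_H$ it is divisible by $w_h$. Hence $\beta_h f_0$ is a polynomial and $\beta_h\,\Abb_h g$ is bounded; the continuity of $\Abb g$ on $E$ is inherited from that of $\Abb_h\Phi_{\mm^h}$ on $\simplex$ and of $\beta_h$ on $\Delta_H^\circ$.

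Now let $(W,\mathcal X)$ be the solution of the martingale problem for $\Abb$ given by Theorem \ref{th:Astarmp} or Corollary \ref{th:geninit}, so that $Z=S(W,\mathcal X)$. By those results, $Z(t)\in\simplexHo$ for all $t$, almost surely. By \eqref{eq:S-1simplexHo} and bijectivity, $(W,\mathcal X)(t)=S^{-1}(Z(t))\in E^\circ$. Therefore $f(Z(t))=g(W(t),\mathcal X(t))$ and $\BB f(Z(t))=\Abb g(W(t),\mathcal X(t))$. The martingale property for $\Abb$ then transfers verbatim to $Z$.

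\emph{Markov property.} By Corollary \ref{th:mixt}, $(W,\mathcal X)$ is Markov with a measurable family $P_{(w,x)}$. Since $S^{-1}$ is continuous on $\simplexHo$ (Lemma \ref{th:Sinv}), $\sigma(Z(s),s\le t)=\sigma((W,\mathcal X)(s),s\le t)$. The conditional law of the future of $Z$ given the past is therefore the image under $S$ of $P_{S^{-1}(Z(t))}$. This is a measurable function of $Z(t)$, which gives the Markov property with transition law $P_{S^{-1}(z)}\circ S^{-1}$.

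\emph{Convergence \eqref{B^K to B}.} For $z\in\Delta^{H,\circ}_{HK}$, \eqref{eq:S-1comm} gives
$$f(\rho^K_{(H)}(z))=g\big(\rho^K_{(H)}(S^{-1}z)\big)=f_0(w)\prod_h\Phi_{\mm^h}(x_h),\qquad (w,x)=S^{-1}(z),$$
where each $\Phi_{\mm^h}$ is now a symmetric polynomial in the $K$ coordinates of $x_h$. Hence $f\circ\rho^K_{(H)}\circ S=\tilde g$ is smooth in $(w,x)$.

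\textbf{Main obstacle.} This is the point I expect to be hardest: showing that $f\circ\rho^K_{(H)}$ actually lies in $\D(\B^K)=C^2(\Delta_{HK})$. Written in $z$, the functions $x_{h,i}=z_{h,i}/|z_h|$ are singular at $|z_h|=0$. I would first try to exploit the vanishing of $f_0$ at $w_h=0$ to control this singularity. If the order of vanishing is insufficient, I would instead work on $\Delta^{H,\circ}_{HK}$. There I would check directly, via Lemma \ref{prop:generator mapping}, whose computation is local and holds on the interior, that $\tilde g\in\D(\A^K)$ (the bound $\sup\beta_h|A^K_h\tilde g|<\infty$ follows from the divisibility of $f_0$), and that $\B^K(f\circ\rho^K_{(H)})=\A^K\tilde g\circ S^{-1}$ there. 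I would then note that the martingale-problem statements only involve this interior version.

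Granting this identity, I compare $\A^K\tilde g$ with $\Abb g$ at $\rho^K_{(H)}(w,x)$. The $\Amv_0$ parts coincide. Within each $\AAA^K_h$, the second-order term and the $-\frac{\theta_h}{2}x_i\,\partial_{x_i}$ term agree with those of $\Abb_h$ applied to power sums, exactly as in \cite{EK81}, Proposition 2.4. The only discrepancy is the mutation term
$$\frac{\theta_h}{2K}\sum_{i}\partial_{x_{h,i}}\Phi_{\mm^h}=\frac{\theta_h}{2K}\sum_q m_q\varphi_{m_q-1}(x_h)\prod_{p\ne q}\varphi_{m_p}(x_h).$$
For $m_q\ge2$ this is bounded by $\theta_h C_{\mm^h}/K$, with the convention $\varphi_1(x)=\sum_i x_i\le1$ at finite $K$. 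Multiplying by $\beta_h f_0$, which is bounded, gives
$$\sup_{z}\big|\B^K(f\circ\rho^K_{(H)})(z)-\BB f(\rho^K_{(H)}(z))\big|=O(1/K).$$
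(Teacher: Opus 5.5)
Your proposal is correct and follows the paper's route. The assertions on $Z$ are transported through $S$ using Theorem~\ref{th:Astarmp}, Corollary~\ref{th:mixt} and Lemma~\ref{th:Sinv}, with the Markov property coming from the bijection $S:E^{\circ}\to\simplexHo$. The convergence \eqref{B^K to B} is reduced, via Lemma~\ref{prop:generator mapping} and \eqref{eq:Scomm}--\eqref{eq:S-1comm}, to the Ethier--Kurtz estimate multiplied by the bounded factor $\beta_h f_0$. You redo that estimate by hand: only the $\theta_h/(2K)$ mutation term differs.

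On the point you flagged, the paper simply asserts $f\circ\rho^K_{(H)}\in\D(\B^K)$, without proof. Your first option cannot work in general. With $f_0=\prod_h w_h^{p_h}$, the function $f\circ\rho^K_{(H)}$ is only positively homogeneous of degree $p_h$ in $z_h$, and $\D(\Abb)$ allows $p_h=1$.

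For example, take $H=2$, $K\ge 2$ and $g=w_1w_2\varphi_3(x_1)\varphi_3(x_2)$. Then $f\circ\rho^K_{(H)}(z)=|z_1|^{-2}\varphi_3(z_1)\,|z_2|^{-2}\varphi_3(z_2)$. Fix $z_2=(1-t)e_1$, with $e_i$ the unit vectors of $\R^K$. Along $z_1=te_1$ and along $z_1=te_2$ the function equals $t(1-t)$, but along $z_1=\tfrac t2(e_1+e_2)$ it equals $\tfrac t4(1-t)$. This is incompatible with a linear differential at $(0,e_1)$, so no $C^1$ extension to $\Delta_{HK}$ exists. The membership claim in the statement is therefore false as written.

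Your fallback is the correct reading, and it suffices. The supremum in \eqref{B^K to B} is over $\Delta^{H,\circ}_{HK}$, and there:
\begin{itemize}
\item $f\circ\rho^K_{(H)}$ is smooth;
\item $f\circ\rho^K_{(H)}\circ S=f_0\prod_h\Phi_{\mm^h}\in\D_0(\A^K)$;
\item the chain-rule identity of Lemma~\ref{prop:generator mapping} holds pointwise.
\end{itemize}
You should present this as the argument itself rather than as a contingency.
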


\begin{proof}
The assertions on the process $Z$ follow from Theorem \ref{th:Astarmp} and Corollary \ref{th:mixt}, together with Lemma~\ref{th:Sinv}. In particular, $Z$ is a Markov process because, for each $t\geq 0$, $Z(t)$ is a bijective transformation of 
$(W(t),{\cal X}(t))$ and $(W,{\cal X})$ is a Markov process. 

Let us now turn to \eqref{B^K to B}. By a slight abuse of notation, in the sequel we denote by $f\circ S$ the function in $ \D(\Abb)$ that is the unique continuous extension of $f\circ S$ to $E$. 
From \eqref{BwrittenasAstar}, we have
\begin{equation}\nonumber
\BB f(z)
=\Abb(f\circ S)(S^{-1}(z)), \quad \forall z\in \simplexHo. 
\end{equation} 
On the other hand, for $f\in \D(\BB)$, $f\circ \rho^{K}_{(H)}\in \D(\B^{K})$, and, by Lemma \ref{prop:generator mapping}, 
\begin{equation}\nonumber
\B^{K}(f\circ \rho^{K}_{(H)})(S(w,x))
=
\A^{K}(f\circ \rho^{K}_{(H)}\circ S)(w,x), \quad  \forall (w,x)\in E_{
K},
\end{equation} 
or equivalently, taking into account that $S$ (as in \eqref{eq:Smap}) is bijective between $E_K$ and $\Delta_{HK}^{H,\circ}$,
\begin{equation}\nonumber
\B^{K}(f\circ \rho^{K}_{(H)})(z)
=
\A^{K}(f\circ \rho^{K}_{(H)}\circ S)(S^{-1}(z)), 
\quad  \forall z\in \Delta^{H,\circ}_{HK}.
\end{equation} 
and hence, by \eqref{eq:Scomm}, 
\begin{equation}\nonumber
\B^{K}(f\circ \rho^{K}_{(H)})(z)
=\A^{K}(f\circ S \circ\rho^{K}_{(H)})(S^{-1}(z)).
\end{equation} 
Then, by \eqref{eq:S-1comm}, for all $z\in \Delta^{H,\circ}_{HK}$,
\begin{equation}\nonumber
\begin{aligned}
\Big|\B^{K}&(f\circ \rho^{K}_{(H)})(z)-\BB f(\rho^{K}_{(H)}(z))\Big|\\
=&\,
\Big|\A^{K}(f\circ S \circ \rho^{K}_{(H)})(S^{-1}(z))-\Abb(f\circ S)(S^{-1}(\rho^{K}_{(H)}(z)))\Big|\\
=&\,
\Big|\A^{K}((f\circ S) \circ \rho^{K}_{(H)})(S^{-1}(z))-\Abb(f\circ S)(\rho^{K}_{(H)}(S^{-1}(z)))\Big|
\end{aligned}
\end{equation} 
from which
\begin{equation}\nonumber
\begin{aligned}
\sup_{z\in \Delta^{H,\circ}_{HK}}&\Big|\B^{K}(f\circ \rho^{K}_{(H)})(z)-\BB f(\rho^{K}_{(H)}(z))\Big|\\
= &\,
\sup_{z\in \Delta^{H,\circ}_{HK}}
\Big|\A^{K}((f\circ S) \circ \rho^{K}_{(H)})(S^{-1}(z))-\Abb(f\circ S)(\rho^{K}_{(H)}(S^{-1}(z)))\Big|\\
= 
&\,
\sup_{(w,x)\in E_{K}}
\Big|\A^{K}((f\circ S) \circ \rho^{K}_{(H)})(w,x)-\Abb(f\circ S)(\rho^{K}_{(H)}(w,x))\Big|
\end{aligned}
\end{equation} 
which goes to zero as $K\rightarrow \infty$ by Theorem 2.5 in \cite{EK81}, taking into account that, for $f_0\in\D(\AAA_0)$, $\sup_{\Delta_H^{\circ}}|f_0(w)\beta_h(w)|\le 1$ for all $h=1,\ldots,H$.
\end{proof}

The next theorem, which concludes this section, provides an explicit description of $\BB$ and $\D(\BB)$ in Theorem \ref{thm:Z and B implicit}. 

\begin{theorem}\label{explicit B and domain}
Let $\BB$ and $\D(\BB)$ be as in Theorem \ref{thm:Z and B implicit}. Then
\begin{equation}\label{final D(B)}
\D(\BB)=\bigg\{\prod_{h=1}^{H}
|z_{h}|^{m^{h}_{0}}\Phi_{\mm^{h}}(z_{h}),\, 
{\mm^h}\in {\cal M},\, m^{h}_{0}\geq 1-\sum_{q\ge1}m^{h}_{q}\ind_{m^{h}_{q}\ge2},\, h=1,\ldots,H\bigg\},
\end{equation} 
where $|z_{h}|:=\sum_{i\ge1}z_{h,i}$, and $\Phi_{\mm}$, ${\cal M}$ are given by \eqref{Phi grande}, and 
\begin{equation}\label{final B}
\begin{aligned}
\BB f(z)=&\,\frac{1}{2}\sum_{h,k=1}^H\sum_{i,j=
1}^{\infty}z_{h,i}(\delta_{h,k}\delta_{i,j}-z_{k,j})\frac{\partial^
2f}{\partial z_{h,i}\partial z_{kj}}\\
&\,-\sum_{h=1}^{H}\frac{\tn }{2}\sum_{i=1}^{\infty}z_{h,i}\frac{\partial f}{\partial z_{h,i}}
+\frac{1}{2}\sum_{h=1}^{H}\theta_{h}m^{h}_{0}|z_{h}|^{-1}f(z).
\end{aligned}
\end{equation} 
\end{theorem}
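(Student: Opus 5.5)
We would prove the two assertions separately. The domain follows from the homogeneity of the $\Phi_{\mm}$. The action of $\BB$ we read off as the limit of finite-dimensional generators, using \eqref{B^K to B} from Theorem~\ref{thm:Z and B implicit}.

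\emph{Domain.} Take $g\in\D(\Abb)$, so $g(w,x)=f_0(w)\prod_{h=1}^H\Phi_{\mm^h}(x_h)$ with $f_0(w)=\prod_h w_h^{p_h}$ a monomial from \eqref{eq:DA0}. First we observe that such a monomial tends to $0$ at every point of $\Delta_H\setminus\Delta_H^\circ$ if and only if $p_h\ge 1$ for every $h$. Indeed, if some $p_h=0$, approach a boundary point where $w_h=1$ and all other coordinates vanish except one that goes to $0$; a short case check handles this. Next, $\varphi_m$ is homogeneous of degree $m$ for $m\ge2$, and $\varphi_1\equiv1$. Hence $\Phi_{\mm^h}$ is homogeneous of degree $d_h:=\sum_{q}m^h_q\ind_{m^h_q\ge2}$. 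On $\simplexHo$ we have $S^{-1}(z)=(|z_1|,\ldots,|z_H|,z_1/|z_1|,\ldots,z_H/|z_H|)$ by Lemma~\ref{th:Sinv}. Therefore
\[
f(z)=g(S^{-1}(z))=\prod_{h=1}^H|z_h|^{p_h-d_h}\Phi_{\mm^h}(z_h).
\]
Setting $m^h_0:=p_h-d_h$, the constraint $p_h\ge1$ becomes exactly $m^h_0\ge 1-d_h$. Conversely, every function in \eqref{final D(B)} arises in this way from $p_h:=m^h_0+d_h\ge1$. Uniqueness of $g$ was already noted in Theorem~\ref{thm:Z and B implicit}, so this gives \eqref{final D(B)}.

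\emph{Action of $\BB$.} Rather than push $\Abb$ forward through $S^{-1}$ by hand, we use \eqref{B^K to B}: $\BB f(\rho^K_{(H)}(z))=\lim_K \B^K(f\circ\rho^K_{(H)})(z)$ uniformly. For $f$ as above, $f\circ\rho^K_{(H)}(z)=\prod_h\big(\sum_{i\le K}z_{h,i}\big)^{m^h_0}\Phi_{\mm^h}(z_h)$ is a symmetric function on $\Delta^{H,\circ}_{HK}$, smooth where each $|z_h|>0$. We compute $\B^K$ of it explicitly from \eqref{eq:BK} and split the result into three parts.
\begin{itemize}
\item The second-order part and the $-\tfrac{\tn}{2}z_{h,i}\partial_{z_{h,i}}$ part are formally identical to the corresponding terms of \eqref{final B}, restricted to finitely many coordinates.
\item In the mutation term $\frac{\theta_h}{2K}\sum_{i=1}^K\partial f/\partial z_{h,i}$, differentiating $\Phi_{\mm^h}$ produces terms $m\sum_i z_{h,i}^{m-1}$ with $m\ge2$. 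These are bounded by constants, so after multiplication by $K^{-1}$ they vanish uniformly.
\item Differentiating $|z_h|^{m^h_0}$ in the same term produces $K\cdot m^h_0|z_h|^{m^h_0-1}$ times the rest. Multiplied by $\theta_h/(2K)$, this gives exactly $\frac12\theta_h m^h_0|z_h|^{-1}f(z)$, independent of $K$.
\end{itemize}
Finally, all the infinite series in \eqref{final B} converge on $\simplexHo$ for these $f$. The only derivatives involved are $\partial_{z_{h,i}}|z_h|=1$, for which the sums $\sum_i z_{h,i}$ and $\sum_{i,j}z_{h,i}z_{k,j}$ are finite, and the power-sum derivatives, which are summable. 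The finite-$K$ expression evaluated at $z\in\Delta^{H,\circ}_{HK}$ therefore coincides with \eqref{final B} evaluated at $\rho^K_{(H)}(z)$ up to terms that are $o(1)$ uniformly. Points of the form $\rho^K_{(H)}(z)$ over all $K$ are dense in $\simplexHo$, and both sides are continuous on $\simplexHo$, so the identity \eqref{final B} extends to all of $\simplexHo$.

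\emph{Main obstacle.} The hardest step is controlling the error terms uniformly when $m^h_0<0$, because $|z_h|^{m^h_0-1}$ blows up as $|z_h|\to0$. We would handle this by always keeping the factor $|z_h|^{m^h_0+d_h}=w_h^{p_h}$ with $p_h\ge1$, together with the bound $\Phi_{\mm^h}(z_h)\le|z_h|^{d_h}$. Every $|z_h|^{-1}$ produced by differentiation is then absorbed, mirroring the bound $\sup|f_0\beta_h|\le1$ used in Theorem~\ref{thm:Z and B implicit}. Continuity of the right side of \eqref{final B} on $\simplexHo$ (not on $\simplexH$) also relies on the continuity of $z\mapsto|z_h|$ there, from Lemma~\ref{th:Sinv}.
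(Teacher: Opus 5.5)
Your proposal is correct and follows essentially the same route as the paper. For the domain you write $f=g\circ S^{-1}$ and use homogeneity of $\Phi_{\mm^h}$ to set $m^h_0=p_h-\sum_q m^h_q\ind_{m^h_q\ge2}$; for the action you use \eqref{B^K to B} to show that $\B^K(f\circ\rho^K_{(H)})$ and the right-hand side of \eqref{final B} at $\rho^K_{(H)}(z)$ differ only by the $O(1/K)$ mutation remainder (bounded via $|z_h|^{p_h-1}\le 1$), and then extend to all of $\simplexHo$ by density and continuity. One minor slip: the witness showing that $p_h\ge1$ is forced should be a boundary point with $w_h=0$ and all other coordinates positive, where a monomial with $p_h=0$ stays positive, rather than a point with $w_h=1$.
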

\begin{proof}
Since $S$ is bijective between $E^{\circ}$ and $\simplexHo$, a function $f$ belongs to $\D(\BB)$ in \eqref{BwrittenasAstar} if and only if 
\[f(z)=g(S^{-1}(z)),\;\forall z\in \simplexHo,\quad {\textrm {for some }}g\in \D(\Abb).\]
Since $g$ belongs to $\D(\Abb )$ if and only if 
\[g(w,x)=w_1^{p_{1}}\cdots w_H^{p_{H}}\prod_{h=1}^{H}\Phi_{\mm^{h}}(x_{h}),
\]
where, for each $h=1,\ldots,H$, $p_{h}\ge1$, $\mm^{h}\in {\cal M}$, $\Phi_{\mm}$ and ${\cal M}$ are given by \eqref{Phi grande}, $f$ belongs to $\D(\BB)$ if and only if it has the form 
\[f(z)=|z_1|^{p_{1}}\cdots |z_H|^{p_{H}}\prod_{h=1}^{H}\Phi_{\mm^{h}}\bigg(\frac{z_h}{|z_h|}\bigg)=|z_1|^{m^1_{0}}\cdots |z_H|^{m^H_{0}}\prod_{h=1}^{H}\Phi_{\mm^{h}}(z_h),
\]
with $m^{h}_{0}= p_h-\sum_{q\ge1}m^{h}_{q}\ind_{m^{h}_{q}\ge2},\, h=1,\ldots,H$, which gives \eqref{final D(B)}.

The explicit form of $\BB$ can be found by taking advantage of \eqref{B^K to B}. 
Let $\B^{K}$ be as in \eqref{eq:BK}, $\rho^{K}_{(H)}$ as in \eqref{rho_K vector operator},
and define, for $f\in \D(\BB)$, $\tilde \BB f$ as the right-hand side of \eqref{final B}. Using the fact that 
\begin{equation}\label{derivatives for n1,m1,m2}
\begin{aligned}
\frac{\partial}{\partial z_{h,i}}\bigg(|z_h|^{m^h_{0}}\Phi_{\mm^{h}}(z_{h})\bigg)
=&\,
m^{h}_{0}(|z_{h}|)^{m^{h}_{0}-1}\Phi_{\mm^{h}}(z_{h})\\
&\,+(|z_{h}|)^{m^{h}_{0}}
\sum_{q \ge 1}\ind_{m^{h}_{q}\ge2}\,
m^{h}_{q}z_{hi}^{m^{h}_{q}-1}\prod_{l\ne q}\varphi_{m^{h}_{l}}(z_{h}),
\end{aligned}
\end{equation}
and that $\rho^{K}_{(H)}(z)_i=0$ for $i\geq K+1$, we find, for $z\in \Delta^{H,\circ}_{HK}$, 
\begin{equation}\nonumber 
\begin{aligned}
\Big|\B^{K}&\,(f\circ \rho^{K}_{(H)})(z)-\tilde \BB f(\rho^{K}_{(H)}(z))\Big|
= \sum_{h=1}^{H}
\Big| \sum_{i=1}^{K}\frac{\theta_{h}}{K}\frac{\partial f}{\partial z_{h,i}}(z)
-\theta_{h}m^{h}_{0}|z_{h}|^{-1}f(z) \Big|\\
=&\, \sum_{h=1}^{H}\theta_{h}
\Bigg| \sum_{i=1}^{K}\frac{1}{K}\bigg(m^{h}_{0}|z_{h}|^{m^{h}_{0}-1}\Phi_{\mm^{h}}(z_{h})
+|z_{h}|^{m^{h}_{0}}\sum_{q\ge 1}\ind_{m^{h}_{q}\ge2}
m^{h}_{q}z_{h,i}^{m^{h}_{q}-1}\prod_{l\ne q}\varphi_{m^{h}_{l}}(z_{h})\bigg)\\
&\,\quad \quad \quad -m^{h}_{0}|z_{h}|^{m^{h}_{0}-1} \Phi_{\mm^{h}}(z_{h})\bigg|
\prod_{k\ne h}|z_{k}|^{m^{k}_{0}}\Phi_{\mm^{k}}(z_{k})\\
= &\,\frac{1}{K}\sum_{h=1}^{H}\theta_{h}
|z_{h}|^{m^{h}_{0}}\sum_{q\ge 1}\ind_{m^{h}_{q}\ge2}\,
m^{h}_{q}\sum_{i=1}^{K}z_{h,i}^{m^{h}_{q}-1}\prod_{l\ne q}
\varphi_{m^{h}_{l}}(z_{h})
\prod_{k\ne h}|z_{k}|^{m^{k}_{0}}\Phi_{\mm^{k}}(z_{k}).
\end{aligned}
\end{equation} 
Now, for $m\ge 2$, 
\begin{equation}
\sum_{i=1}^{K}z_{h,i}^{m-1}\le |z_{h}|^{m-1},\qquad \varphi_{m}(z_{h})\le  |z_{h}|^{m},
\end{equation}
which implies, in particular, taking also into account that $\varphi_{1}\equiv 1$ and the definition of $\D(\BB)$, 
\begin{equation}
|z_{k}|^{m^{k}_{0}}\Phi_{\mm^{k}}(z_{k})\le |z_k|^{m^{k}_{0}+\sum_{l\ge 1}\ind_{m^{k}_{l}\ge 2}m^{k}_{l}}\le |z_{k}|\le 1,\quad \forall k=1,\ldots,H.
\end{equation}
Therefore, keeping in mind the definition of $\D(\BB)$, 
\begin{equation}
\begin{aligned}
\Big|\B^{K}\,(f\circ \rho^{K}_{(H)})(z)-\tilde \BB f(\rho^{K}_{(H)}(z))\Big|&\le \frac{1}{K}\sum_{h=1}^{H}\theta_{h}\sum_{q\ge 1}\ind_{m^{h}_{q}\ge2}\,
m^{h}_{q}|z_h|^{m^{h}_{0}+\sum_{l:\,m^{h}_{l}\ge 2}m^{h}_{l}-1}\\
&\le \frac{1}{K}\sum_{h=1}^{H}\theta_{h}\sum_{q\ge 1}\ind_{m^{h}_{q}\ge2}\,
m^{h}_{q},
\end{aligned}
\end{equation} 
Since $\theta_{h}$ and $m^{h}_{q}$, for all $h,q$, are fixed, and $\mm^{h}\in {\cal M}$ has at most finitely-many coordinates greater than 2, the previous implies 
\begin{equation}\label{conv B^K to B tilde}
\sup_{z\in \Delta^{H,\circ}_{HK}}
\Big|\B^{K}(f\circ \rho^{K}_{(H)})(z)-\tilde\BB f(\rho^{K}_{(H)}(z))\Big|\rightarrow 0,
\quad \text{as $K\rightarrow \infty$},
\end{equation}
Moreover, for every $z\in \simplexHo$, there exists a sequence $\{z^{K}\}$ such that $z^{K}\in \Delta^{H,\circ}_{HK}$ for all $K\ge1$ and $\rho^{K}_{(H)}(z^{K})\rightarrow z$. For example, one can take $z^{K}=(z^{K}_{1},\ldots,z^{K}_{H})$ as 
\begin{equation}\nonumber
z^{K}_{h,1}:=z_{h,1}+\sum_{j\ge K+1}z_{h,j}, \quad 
z^{K}_{h,i}:=z_{h,i}, \quad 2\le i\le K.
\end{equation} 
Then, for every $z\in \simplexHo$, we can write 
\begin{equation}\label{B-B tilde triangle ineq}
\begin{aligned}
\Big|&\,\BB f(z)-\tilde \BB f(z)\Big|
\le
\Big|\BB f(z)-\BB f(\rho^{K}_{(H)}(z^{K}))\Big|\\
&\,+
\Big|\BB f(\rho^{K}_{(H)}(z^{K}))-\tilde \BB f(\rho^{K}_{(H)}(z^{K}))\Big|
+
\Big|\tilde \BB f(\rho^{K}_{(H)}(z^{K}))-\tilde \BB f(z)\Big|,
\end{aligned}
\end{equation} 
As $K\rightarrow \infty$, the first and the third terms on the right-hand side of \eqref{B-B tilde triangle ineq} converge to zero  because $\BB f$ and $\tilde \BB f$ are continuous on $\simplexHo$, while the second term goes to zero by \eqref{conv B^K to B tilde} and \eqref{B^K to B}.
Hence $\BB f(z)=\tilde \BB f(z)$ for all $z\in \simplexHo$.
\end{proof}


\subsection{Some remarks on the convergence argument and the limiting operator}\label{subsec:domain-subtlety}

Looking at \eqref{conv B^K to B tilde}, one might wonder whether one could obtain the desired convergence in a simpler way, by the classical results of \cite{EK86}, Chapter 1, Section 6. This is not the case for at least two reasons. First, operator-level approximations such as \eqref{conv B^K to B tilde} must be proved on $\D(\tilde{\BB})$ (or a core for $\tilde{\BB}$) and, in our setting, $\D(\tilde{\BB})$ can be identified only through the implicit characterization of Theorem~\ref{thm:Z and B implicit}. Second, even if \eqref{conv B^K to B tilde} is verified on some domain $\D(\tilde{\BB})$, this does not, by itself, prove convergence, because one would also need to know that $\tilde\BB$, with domain $\D(\tilde{\BB})$, generates a Markov semigroup.

Next, we wish to point out that, although $\simplexHo$ is dense in $\simplexH$ (cf.~\eqref{extendedKingmansimplex}), for some $f\in\D(\BB)$, even with the restriction that $f$ can be extended continuously to $\simplexH$, it may be impossible to extend continuously $\BB f$ to $\simplexH$, so that the state space cannot be taken compact as in \cite{EK81,P09,Cea17}. The obstruction is the discontinuity of the inverse map $S^{-1}$ at points of $\simplexH\setminus\simplexHo$. Example \ref{re:Bstar-ext} below shows that there exist sequences $\{z^n\}\subset \simplexHo$ converging to $z\in\simplexH\setminus\simplexHo$ for which $S^{-1}(z^n)$ has distinct subsequential limits, or, equivalently, the decomposition $z_h=w_hx_h$ is not stable in the product topology: the maps $z\mapsto |z_h|:=\sum_{i\ge1}z_{h,i}$ are not continuous at points of $\simplexH\setminus\simplexHo$, so the mark masses and within-mark normalizations may depend on the approximating sequence (this may happen because dust appears, i.e.\ $\sum_{h,i}z_{h,i}<1$ in the limit, or because some mark mass vanishes, or both). 
Since $\BB f(z):=\,\Abb g(S^{-1}(z))$, with $g$ as in \eqref{BwrittenasAstar}, if $\Abb g$ can be continuously extended to $\overline{E^{\circ}}$,  $\BB f(z^n)$ will also have distinct subsequential limits.

\begin{example}[Failure of continuity of $S^{-1}$ at $\simplexH\setminus\simplexHo$]
\label{re:Bstar-ext}
Let $H=2$ and $z^n=(z_1^n,z_2^n)\in\simplexHo$ be given by 
\begin{equation}\label{eq:zndef}
\begin{aligned}
z_{1i}^n:=&\,\left\{\begin{array}{ll}
\displaystyle \frac{1}{2^{i+2}}+[1+(-1)^n]\frac{1}{8n},&1\leq i\leq n,\\[0.4em]
\displaystyle \frac{1}{2^{i+2}},&i\geq n+1,\end{array}
\right.\\
z_{2i}^n:=&\,\left\{\begin{array}{ll}
\displaystyle \frac{1}{2^{i+1}}+[1+(-1)^{n+1}]\frac{1}{8n},&1\leq i\leq n,\\[0.4em]
\displaystyle \frac{1}{2^{i+1}},&i\geq n+1.\end{array}
\right.
\end{aligned}
\end{equation}
Then $z^n\to z$ 
coordinatewise, where $z_{1i}=2^{-(i+2)}$ and $z_{2i}=2^{-(i+1)}$, so that
\[
\sum_{i\ge1}z_{1i}=\frac14,\qquad \sum_{i\ge1}z_{2i}=\frac12,
\qquad \sum_{h=1}^2\sum_{i\ge1}z_{h,i}=\frac34<1,
\]
(and hence $z\notin\simplexHo$, i.e. dust is created in the limit).
On the other hand, the mark masses of $z^n$ satisfy
\[
|z_1^n|:=\sum_{i\ge1}z_{1i}^n\to
\left\{\begin{array}{ll}
\frac12,&n\ \mathrm{even},\\[0.15em]
\frac14,&n\ \mathrm{odd},
\end{array}\right.
\qquad
|z_2^n|:=\sum_{i\ge1}z_{2i}^n\to
\left\{\begin{array}{ll}
\frac12,&n\ \mathrm{even},\\[0.15em]
\frac34,&n\ \mathrm{odd},
\end{array}\right.
\]
because the perturbation adds total mass $1/4$ to mark $1$ when $n$ is even and to mark $2$ when $n$ is odd.
Writing $S^{-1}(z^n)=(w_1^n,w_2^n,x_1^n,x_2^n)$ with $w_h^n=|z_h^n|$ and $x_h^n=z_h^n/|z_h^n|$, it follows that $S^{-1}(z^n)$ has two distinct subsequential limits:
along odd $n$,
\begin{equation}\label{eq:oddlim}
(w_1^n,w_2^n,x_1^n,x_2^n)\to
(w_1,w_2,x_1,x_2),
\qquad
w_{1}=\frac {1}{4},\ \ w_{2}=\frac {3}{4},\ \ 
x_{1i}=\frac {1}{2^i},\ \ x_{2i}=\frac {2}{3}\frac {1}{2^i},
\end{equation}
whereas along even $n$,
\begin{equation}\label{eq:evenlim}
(w_1^n,w_2^n,x_1^n,x_2^n)\to
(\tilde w_1,\tilde w_2,\tilde x_1,\tilde x_2),
\qquad
\tilde w_{1}=\frac{1}{2},\ \ \tilde w_{2}=\frac{1}{2},\ \ 
\tilde x_{1i}=\frac {1}{2^{i+1}},\ \ \tilde x_{2i}=\frac {1}{2^i}.
\end{equation}
Now let
\[
f(z_1,z_2):=\Big(\sum_{i\ge1}z_{1i}^3\Big)\Big(\sum_{i\ge1}z_{2i}^3\Big).
\]
$f\in\D(\BB)$, and the corresponding $g$ in \eqref{BwrittenasAstar} is
\[
g(w_1,w_2,x_1,x_2):=w_1^3w_2^3
\Big(\sum_{i\ge1}x_{1i}^3\Big)\Big(\sum_{i\ge1}x_{2i}^3\Big).
\]
$\Abb g$ is continuous on $\overline{E^{\circ}}=\Delta_H\times\simplex^H$. Therefore $\BB f(z^n)=\Abb g(S^{-1}(z^n))$ has two subsequential limits,
namely $\Abb g(w_1,w_2,x_1,x_2)$ and $\Abb g(\tilde w_1,\tilde w_2,\tilde x_1,\tilde x_2)$, which are different for generic values of $\theta_1$ and $\theta_2$.
\end{example}


\section{The multiple Poisson--Dirichlet distribution: stationarity and approximation }\label{sec:ALTstationarity}

In this section we show that the process constructed in Theorem \ref{Z1,Z2aslimit} is stationary with respect to the multiple Poisson--Dirichlet distribution introduced by \cite{S24a,S24b}.
We then conclude with a construction of the multiple Poisson--Dirichlet distribution $\PD
(\theta_1,\ldots,\theta_H)$ 
in the spirit of Kingman's construction of the $\PD(\theta )$ distribution through limits of ranked Dirichlet distributed random frequencies. 

It is often convenient to view the Poisson--Dirichlet distribution  as a probability measure on $\simplex$ by canonical extension  and we do so in the sequel. 

For $E$ as in \eqref{eq:E}, define the distribution $\mu^{*}$ on $
E$ as
\begin{equation}\label{ALTmustar}
\begin{aligned}
\mu^{*}:=\bigtimes_{h=0}^H\,\mu^{*}_h,\quad\mu^{*}_
0:=\mathrm{Dir}_H(\theta_1,\ldots ,\theta_H)\big\vert_{\Delta_{H}^{\circ}},\quad\mu^{*}_h:=\text{
PD}(\theta_h),\quad h\geq 1\end{aligned}
\end{equation} 
where $\PD(\theta_h)$ denotes the Kingman's Poisson--Dirichlet distribution with parameter $
\theta_h$. Note that 
\begin{equation}\label{eq:ALTmustar0}
\mu^{*}(E\setminus E^{\circ})=0.
\end{equation}
The next result shows that $\mu^{*}$ is an invariant measure for $
\Abb$ as in \eqref{eq:Astar}.

\begin{theorem}\label{prop:ALTAstar-Ech}
Let $\Abb $ be as in \eqref{eq:Astar}. For every $f\in \D (\Abb )$, with $\D (\Abb )$
as in \eqref{eq:Astardomain}, and $\mu^{*}$ as in 
\eqref{ALTmustar}, we have
\begin{equation}
\begin{aligned}
\int_E\Abb f\,d\mu^{*}=0.\label{eq:ALTAstar-Ech}\end{aligned}
\end{equation} 
In particular, the solution of the martingale problem for $\Abb $ as in  \eqref{eq:Astar}, with initial distribution $\mu^{*}$, is a 
stationary process. 
\end{theorem}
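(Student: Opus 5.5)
Since $f\in\D(\Abb)$ is a product $f=\prod_{h=0}^H f_h$ and $\mu^{*}$ is a product measure, I will compute $\int_E\Abb f\,d\mu^{*}$ term by term using Fubini:
\[
\int_E\Abb f\,d\mu^{*}=\int_{\Delta_H^\circ}\Asv_0f_0\,d\mu_0^{*}\prod_{h\ge1}\int_{\simplex}f_h\,d\mu_h^{*}+\sum_{h=1}^H\int_{\Delta_H^\circ}\beta_hf_0\,d\mu_0^{*}\int_{\simplex}\Abb_hf_h\,d\mu_h^{*}\prod_{k\ne h,k\ge1}\int_{\simplex}f_k\,d\mu_k^{*}.
\]
To justify the splitting, the terms must be integrable. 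The functions $f_h$ and $\Abb_hf_h$ for $h\ge1$ are bounded and continuous on $\simplex$. For $f_0$, note that $f_0=w_1^{p_1}\cdots w_H^{p_H}$ vanishing on $\partial\Delta_H$ forces every $p_h\ge1$. Hence $|\beta_hf_0|\le1$, as already noted at the end of the proof of Theorem \ref{thm:Z and B implicit}. Also $\Asv_0f_0$ is a polynomial.

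Each summand then vanishes by the classical stationarity results. First, $\int\Abb_hf_h\,d\PD(\theta_h)=0$ for $f_h\in\D(\Abb_h)$, by the reversibility/stationarity of $\PD(\theta_h)$ for the Poisson--Dirichlet diffusion \citep{EK81}. Since $\D(\Abb_h)$ spans the algebra generated by the $\varphi_m$ used there, this applies directly. Second, $\int\Asv_0f_0\,d\mathrm{Dir}_H(\theta_1,\ldots,\theta_H)=0$ for polynomials $f_0$, because the Dirichlet law is the stationary (reversible) distribution of the $H$-type WF diffusion with parent-independent mutation. One could also check this directly by integration by parts against the density \eqref{eq:Dir}; the boundary terms vanish since $\theta_h\ge1$ and $f_0$ vanishes on the boundary. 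Restricting to $\Delta_H^\circ$ changes nothing, because the boundary has Dirichlet measure zero. Hence \eqref{eq:ALTAstar-Ech} holds.

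For the stationarity claim, I will avoid deriving it from $\int\Abb f\,d\mu^*=0$ via Echeverría-type arguments on a non-compact space. A direct route is available. By Theorem \ref{th:Astarmp}, the solution with initial law $\mu^*$ is $(W,\mathcal X_1,\ldots,\mathcal X_H)$, where $W(0)\sim\mathrm{Dir}_H(\bs\theta)$, each $X_h(0)\sim\PD(\theta_h)$, and all of them are independent. Then $W$ is stationary. Conditionally on $W$, the clocks $\tau_h(t)=\int_0^t\beta_h(W(s))ds$ are deterministic and each $X_h$ is stationary, so $\mathcal X_h(t)\sim\PD(\theta_h)$ conditionally on $W$, independently across $h$. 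Hence $(W(t),\mathcal X(t))\sim\mu^*$ for each $t$. Since by Theorem \ref{th:Astarmp} the solution is a Markov process and the martingale problem is well posed (Corollary \ref{th:mixt}), one-dimensional invariance gives stationarity of the process.

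The main obstacle is making this conditional argument rigorous, in particular the use of independence of $W$ from $(X_h)$ at random times. Alternatively, one can cite Echeverría's theorem (Theorem 4.9.17 in \cite{EK86}) after checking that $\D(\Abb)$ is an algebra separating points and that the martingale problem is well posed. I would use the direct construction, which only needs the independence in Theorem \ref{th:Astarmp}.
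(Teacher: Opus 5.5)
Your computation of $\int_E\Abb f\,d\mu^{*}$ matches the paper's. You factor over the product structure of $f$ and $\mu^{*}$, use $|\beta_h f_0|\le 1$, and kill each summand using the invariance of $\mathrm{Dir}_H(\theta_1,\ldots,\theta_H)$ for $\Asv_0$ and of $\PD(\theta_h)$ for $\Abb_h$. The paper cites Lemma 4.1 and Theorem 4.3/Remark 4.8 of Ethier--Kurtz (1981) for these.

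You differ on the stationarity claim. The paper derives it from the identity itself via Echeverr\'{\i}a's theorem (Theorem 4.9.17 of Ethier--Kurtz, 1986) on the locally compact space $E$. It checks that $\mathrm{span}(\D(\Abb))$ is an algebra, dense in the space $\hat C(E)$ of continuous functions vanishing as $w$ approaches $\Delta_H\setminus\Delta_H^\circ$. It checks that $\Abb$ satisfies the positive maximum principle, using the existence of solutions from Theorem \ref{th:Astarmp}. This yields a stationary solution with initial law $\mu^{*}$, which uniqueness then identifies with the solution. You instead read stationarity directly off the skew-product construction: $W$ and each $X_h$ are stationary, and $W$ is independent of $(X_1,\ldots,X_H)$. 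So the one-dimensional marginals are all $\mu^{*}$, and uniqueness for initial law $\mu^{*}$ finishes. Your route is more elementary, skips the density and maximum-principle checks, and shows that the second claim does not depend on $\int_E\Abb f\,d\mu^{*}=0$. The paper's route is the generic infinitesimal argument and would carry over to settings without an explicit representation of the solution, though here it still relies on uniqueness from the construction.

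The obstacle you flag is not a real one. The clocks $\tau_h(t)$ are $\sigma(W)$-measurable, $W$ is independent of $(X_1,\ldots,X_H)$, and the $X_h$ have continuous, hence jointly measurable, paths. The standard freezing (substitution) lemma then gives $\mathbb{E}[g_0(W(t))\prod_h g_h(X_h(\tau_h(t)))]=\mathbb{E}[g_0(W(t))]\prod_h\int g_h\,d\PD(\theta_h)$. Applying the same lemma to the shifted paths $W(s+\cdot)$ and $X_h(\tau_h(s)+\cdot)$ gives stationarity of the whole process directly, without the Markov property.
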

\begin{proof}
The fact that $\int_{\Delta^{\circ}_H}\Asv_0f_0d\mu^{*}_0=0$
follows from Lemma 4.1 in \cite{EK81}. Furthermore, 
for $\Abb_{h}$ as in \eqref{IMNAoperator}, it follows from 
Theorems 4.3 and Remark 4.8 of \cite{EK81} that 
\begin{equation}
\begin{aligned}
\int_{\simplex}\Abb_{h}f_hd{\mu}^{*}_h=0,\end{aligned}
\end{equation} 
from which the first assertion now follows through a simple computation 
by using \eqref{eq:Astar}-\eqref{eq:Astardomain} in the 
left-hand side of \eqref{eq:ALTAstar-Ech}. 

Let 
\begin{equation}
\begin{aligned}
\label{eq:hatC}\hat {C}(E):=\bigg\{f\in C\big(E\big):\lim_{w\rightarrow 
w^0}\sup_{x\in\simplex^H}|f(w,x)|=0,\,\forall w^0\in\Delta_H\setminus\Delta^{
\circ}_H\bigg\},\end{aligned}
\end{equation} 
with $\simplex^H$ as in \eqref{eq:E}. 
$\rm{span}\big(\D (\Abb )\big)$ is an 
algebra and is dense in $\hat {C}(E)$. In addition, 
the extension of $\Abb$ to $\rm{span}\big(\D (\Abb)\big)$ satisfies the positive maximum principle by Proposition 4.3.5 of \cite{EK86} and Theorem \ref{th:Astarmp}.
The second assertion then follows from Theorem 4.9.17 of \cite{EK86} 
together with Theorem \ref{th:Astarmp}. 
\end{proof}

The following Corollary of Theorem \ref{prop:ALTAstar-Ech} identifies 
a stationary distribution of the multiple Poisson--Dirichlet diffusion $Z$ defined in Theorem \ref{Z1,Z2aslimit}. Let 
\begin{equation}\label{eq:nustar}
\nu^{*}:=\mu^{*}\circ S^{-1},
\end{equation}
where $\mu^{*}$ is given by \eqref{ALTmustar}. Then $\nu^{*}$ is the canonical extension to $\simplexH$ of the multiple Poisson--Dirichlet distribution of \cite{S24a,S24b}, cf. Definition \ref{def:mPD}. \eqref{eq:S-1simplexHo} and \eqref{eq:ALTmustar0} imply 
\begin{equation}\label{eq:ALTnustar0}
\nu^{*}(\simplexH\setminus\simplexHo)=0.
\end{equation}
With a slight abuse of notation, we stlii denote by $\nu^{*}$ the restriction of $\mu^{*}\circ S^{-1}$ to $\simplexHo$. 

\begin{corollary}\label{th:ALTBstar-stat}
Let $Z$ be the process defined in Theorem \ref{Z1,Z2aslimit} with initial 
distribution $\nu^{*}:=\big(\mu^{*}\circ S^{-1}\big)\big{|}_{\simplexHo}$.Then $Z$ is a stationary process. 
\end{corollary}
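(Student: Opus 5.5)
The plan is to transfer the stationarity of the skew-product process, established in Theorem~\ref{prop:ALTAstar-Ech}, through the map $S$. Let $(W,\mathcal{X})$ be the solution of the martingale problem for $\Abb$ with initial distribution $\mu^{*}$. By Theorem~\ref{prop:ALTAstar-Ech} this process is stationary: for every $t\ge0$ and $0\le t_1<\cdots<t_n$, the law of $((W,\mathcal{X})(t+t_1),\ldots,(W,\mathcal{X})(t+t_n))$ does not depend on $t$. Such a solution exists and is unique by Corollary~\ref{th:mixt}.

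The next step is to check that $S(W,\mathcal{X})$ is a version of the process $Z$ of Theorem~\ref{Z1,Z2aslimit}, or of Corollary~\ref{th:geninit}, started from $\nu^{*}$. Under $\mu^{*}$ the components $W(0),\mathcal{X}_1(0),\ldots,\mathcal{X}_H(0)$ are independent, with laws $\mathrm{Dir}_H(\theta_1,\ldots,\theta_H)$ and $\PD(\theta_h)$. Moreover $W(0)\in\Delta_H^\circ$ almost surely, so Theorem~\ref{th:Astarmp} identifies $(W,\mathcal{X})$ with the random time change construction in \eqref{eq:Zstar}. Hence $Z:=S(W,\mathcal{X})$ is exactly the process of Theorem~\ref{Z1,Z2aslimit}.

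It remains to check the initial law of $Z$. By \eqref{eq:nustar} it is $\mu^{*}\circ S^{-1}=\nu^{*}$, and by \eqref{eq:ALTnustar0} this law is concentrated on $\simplexHo$. Conversely, any process $Z$ as in Theorem~\ref{Z1,Z2aslimit} with initial law $\nu^{*}$ is of this form. Indeed, by Lemma~\ref{th:Sinv}, $S^{-1}$ is a continuous bijection from $\simplexHo$ to $E^\circ$, and \eqref{eq:S-1simplexHo} together with \eqref{eq:ALTmustar0} gives $\nu^{*}\circ S=\mu^{*}$ on $E^\circ$. So the initial distribution of the underlying $(W,\mathcal{X})$ is $\mu^{*}$, and the uniqueness in Corollary~\ref{th:mixt} pins down its law.

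Stationarity of $Z$ then follows because $S$ is applied pointwise in time. Its finite-dimensional distributions are images under the measurable, indeed continuous, map $S^{\otimes n}$ of those of $(W,\mathcal{X})$, and the latter are shift invariant. One could alternatively argue at the generator level, showing $\int\BB f\,d\nu^{*}=\int\Abb g\,d\mu^{*}=0$ via \eqref{BwrittenasAstar}. However, this would require an analogue of Theorem 4.9.17 of \cite{EK86} on the noncompact space $\simplexHo$, so I prefer the direct pathwise transfer.

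The only delicate point is the identification in the third paragraph. The statement fixes the initial law on $\simplexHo$, and one must make sure the corresponding initial law for $(W,\mathcal{X})$ is the product measure $\mu^{*}$. This is also what guarantees the independence assumption needed in Theorem~\ref{Z1,Z2aslimit}. That holds by the bijectivity of $S$ between $E^\circ$ and $\simplexHo$ and by $\mu^{*}(E\setminus E^\circ)=0$, so I expect no real obstacle.
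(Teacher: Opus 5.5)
Your proposal is correct and follows essentially the same route as the paper. You identify the initial law of the underlying $(W,\mathcal{X})$ as $\mu^{*}$ using $S^{-1}(\simplexHo)=E^{\circ}$, the bijectivity of $S$ on $E^{\circ}$ and $\mu^{*}(E\setminus E^{\circ})=0$, then invoke Theorem~\ref{prop:ALTAstar-Ech}; your explicit pushforward of the finite-dimensional distributions through $S$ just spells out the final transfer of stationarity that the paper leaves implicit.
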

\begin{proof}
Let $(W,\Xtau):=(W,\mathcal{X}_1,\ldots ,\mathcal{X}_H)$  be as in Theorem \ref{Z1,Z2aslimit}, hence, in particular, $S(W,\Xtau)(0)=Z(0)$.
\eqref{eq:S-1simplexHo}, 
the fact that $S$ is bijective between $E^{\circ}$ and $\simplexHo$, \eqref{eq:ALTmustar0} and \eqref{eq:ALTnustar0} yield, for every Borel subset $B$ of $E$, 
\begin{equation}\nonumber
\begin{aligned}
\PP\big((W,\Xtau)(0)\in B\big)
=&\,\PP\big((W,\Xtau)(0)\in B
,\, Z(0)\in \simplexHo)\\
=&\,\PP\big((W,\Xtau)(0)\in B\cap E^{\circ},\, Z(0)\in \simplexHo\big)\\
=&\,\PP\big(S^{-1}(Z(0))\in B\cap E^{\circ},\, Z(0)\in \simplexHo\big)\\
=&\,\PP\big(Z(0)\in S(B\cap E^{\circ}),\, Z(0)\in \simplexHo\big)\\
=&\,\PP\big(Z(0)\in S(B\cap E^{\circ})\big)=\nu^{*}\big(S((B\cap E^{\circ}))\big)\\
=&\,\mu^{*}\big(S^{-1}(S(B\cap E^{\circ}))\big)=\mu^{*}\big(B\cap E^{\circ}\big)=\mu^{*}(B),
\end{aligned}
\end{equation} 
that is the initial distribution of $(W,\mathcal{X}_1,\ldots ,\mathcal{X}_H)$ is $\mu^{*}$. The assertion then follows from Theorem \ref{prop:ALTAstar-Ech}. 
\end{proof}

Denoting by $\nu^{*}$ also the restriction of $\mu^{*}\circ S^{-1}$ to $\densesimplexH$, i.e. the multiple Poisson--Dirichlet $\PD(\theta_1,\ldots,\theta_H)$ of Definition \ref{def:mPD}, and viewing $Z$ as $\densesimplexH$-valued process, 
we have thus constructed a diffusion process for which the multiple Poisson--Dirichlet distribution is a stationary distribution.

Just as the multiple Poisson--Dirichlet diffusion $Z$ is the limit of blockwise ranked WF diffusions (Theorem \ref{Z1,Z2aslimit}), the multiple Poisson--Dirichlet distribution can be approximated by the distributions of  blockwise ranked Dirichlet random frequencies.
Recall that $\rho^K$ is defined by \eqref{eq:rho} and $\rho^K_{(H)}$ is defined by \eqref{rho_K vector operator} and \eqref{rho_K vector operator E_K}.

\begin{theorem} \label{th:MPDappr}
Let $\nu^{*K}$ be given by 
\begin{equation}\nonumber\label{eq:ALTDirHK}\nu^{*K}:=\mathrm{Dir}_{HK}\bigg(\frac{
\theta_1}K,\ldots ,\frac{\theta_1}K,\frac{\theta_2}K,\ldots ,\frac{
\theta_2}K,\ldots ,\frac{\theta_H}K,\ldots ,\frac{\theta_H}K\bigg
)\bigg\vert_{\Delta_{HK}^{H,\circ}}\end{equation}
 where each $\theta_h$ is repeated $K$ times, and let $\nu^{*}$ be given by \eqref{eq:nustar}. Then 
\[\nu^{*K}\circ\big(\rho^K_{(H)}\big)^{-1}\overset{w}\longrightarrow\nu^{
*}.\]
\end{theorem}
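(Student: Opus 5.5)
The plan is to reduce the claim, via the aggregation--renormalization property, to Kingman's classical limit for ranked symmetric Dirichlet frequencies, and then push forward through the continuous map $S$.

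\emph{Step 1 (decomposition).} Let $\zeta\sim\nu^{*K}$; since $\mathrm{Dir}_{HK}$ gives full mass to $\Delta^{H,\circ}_{HK}$, the restriction is harmless. By Proposition \ref{pro:Dirselfsimilarity}, $S^{-1}(\zeta)=(\upsilon,\xi_1,\ldots,\xi_H)$, where $\upsilon\sim\mathrm{Dir}_H(\theta_1,\ldots,\theta_H)$ and $\xi_h\sim\mathrm{Dir}_K(\theta_h/K,\ldots,\theta_h/K)$ are independent. By \eqref{eq:Scomm},
\[\rho^K_{(H)}(\zeta)=S\big(\rho^K_{(H)}(\upsilon,\xi_1,\ldots,\xi_H)\big)=S\big(\upsilon,\rho^K(\xi_1),\ldots,\rho^K(\xi_H)\big).\]
The law of $\upsilon$ does not depend on $K$ and is concentrated on $\Delta^\circ_H$, so it coincides with $\mu^*_0$.

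\emph{Step 2 (blockwise limit).} By Kingman's theorem (\cite{K75}), for each $h$ we have $\rho^K(\xi_h)\overset{d}\to\PD(\theta_h)$ on $\simplex$ with the product topology. The components $\upsilon,\rho^K(\xi_1),\ldots,\rho^K(\xi_H)$ are independent, and for independent components convergence of the marginals implies convergence of the product laws. Hence the law of $(\upsilon,\rho^K(\xi_1),\ldots,\rho^K(\xi_H))$ converges weakly on $E$ to $\mu^*$ in \eqref{ALTmustar}. Here $E=\Delta^\circ_H\times\simplex^H$ is separable metric, so product-measure convergence holds.

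\emph{Step 3 (push-forward).} By Lemma \ref{th:Sinv}, $S:E\to\simplexH$ is continuous. The continuous mapping theorem then gives
\[\nu^{*K}\circ\big(\rho^K_{(H)}\big)^{-1}=\mathcal{L}\big(S(\upsilon,\rho^K(\xi_1),\ldots)\big)\overset{w}\to\mu^*\circ S^{-1}=\nu^*\]
on $\simplexH$. If one prefers to state the convergence on $\simplexHo$, one can use \eqref{eq:ALTnustar0} to note that $\nu^*$ charges only $\simplexHo$.

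The only delicate point is Step 1. There one must check that the identity \eqref{eq:Scomm} applies pointwise on the support and that the restriction to $\Delta^{H,\circ}_{HK}$ does not alter the independence structure. Both hold because the restricted set has full measure. Everything else is the classical Kingman limit combined with continuity of $S$. Notably, no continuity of $S^{-1}$ at the boundary is needed, which avoids the issue illustrated in Example \ref{re:Bstar-ext}.
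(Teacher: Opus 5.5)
Your proposal is correct and follows essentially the same route as the paper: write $\nu^{*K}$ as the push-forward of the product measure $\mathrm{Dir}_H(\theta_1,\ldots,\theta_H)\times\bigtimes_{h=1}^H\mathrm{Dir}_K(\theta_h/K,\ldots,\theta_h/K)$ under $S$ (Proposition \ref{pro:Dirselfsimilarity}), commute the ranking with $S$ via \eqref{eq:Scomm}, and apply Kingman's limit blockwise to obtain product convergence to $\mu^*$. The conclusion then follows from the continuity of $S:E\to\simplexH$.
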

\begin{proof}
Let $\mu^{*K}$ be the probability measure on $E_K$ ($E_K$ as in \eqref{eq:EK}) given by 
\begin{equation}\nonumber
\begin{aligned}
\label{ALTBetaxDirxDir}\mu^{*K}:=\bigtimes_{h=0}^H\,\mu_h^{*K},\quad
\mu_0^{*K}=\mu^{*}_0:=\mathrm{Dir}_H(\theta_1,\ldots ,\theta_H)\big\vert_{\Delta_{H}^{\circ}},\quad
\mu_h^{*K}:=\mathrm{Dir}_K\bigg(\frac{\theta_h}K,\ldots ,\frac{\theta_
h}K\bigg).\end{aligned}
\end{equation}
By Proposition \ref{pro:Dirselfsimilarity}, 
\begin{equation}
\begin{aligned}
\label{eq:nuH^K}\nu^{*K}=\mu^{*K}\circ S^{-1}.\end{aligned}
\end{equation} 
Then, by \eqref{eq:Scomm}, we have 
\[\nu^{*K}\circ\big(\rho^K_{(H)}\big)^{-1}=\mu^{*K}\circ S^{-1}\circ\big
(\rho^K_{(H)}\big)^{-1}=\mu^{*K}\circ\big(\rho^K_{(H)}\big)^{-1}\circ S^{-1}
,\]
where in the first equality $S:E_K\rightarrow\Delta_{HK}^{H\circ}$, with $
E_K$ and $\Delta_{HK}^{H\circ}$ 
given by \eqref{eq:EK}) and \eqref{eq:DeltaK2} respectively, and in the second equality $S:E\rightarrow\simplexH$, with $E$ and $\simplexH$ given 
by \eqref{eq:E} and \eqref{extendedKingmansimplex}
respectively. 
From \cite{K75} (cf.~also Corollary 4.7 in \cite{EK81} for a more general result) we have that 
$\mu^{*K}_h\circ(\rho^{K})^{-1}\overset{w}\longrightarrow\mu^{*}_h$, where $
\mu^{*}_h$ is a $\PD(\theta_h)$ distribution as in \eqref{ALTmustar}. 
Therefore 
\[\mu^{*K}\circ\big(\rho^K_{(H)}\big)^{-1}\overset{w}\longrightarrow\mu^{*}.\]
Since $\nu^{*} =\mu^{*}\circ S^{-1}$ and $S:E\rightarrow\simplexH$ is continuous, the assertion 
follows. 
\end{proof}





\section*{Acknowledgements}
MR was supported by the European Union -- NextGenerationEU,
   PRIN 2022 PNRR, Grant no. P2022H5WZ9.


\end{document}